\documentclass[preprint,3p,12pt,authoryear]{elsarticle}



\usepackage{amssymb}
\usepackage{amsthm}

\usepackage{amsmath}
\usepackage{tikz}
\usepackage{cancel}
\usepackage{comment}
\usepackage{framed}
\usepackage{hyperref}


\journal{Annals of Pure and Applied Logic}

\newtheorem{Theorem}{Theorem}[section]

\newtheorem{Lemma}[Theorem]{Lemma}
\newtheorem{Corollary}[Theorem]{Corollary}

\theoremstyle{definition}
\newtheorem{Definition}[Theorem]{Definition}

\newtheorem{Conjecture}[Theorem]{Conjecture}
\newtheorem{Question}[Theorem]{Question}

\newcommand{\renospace}{r.e.}
\newcommand{\re}{r.e.\ }
\newcommand{\turing}{\mathrm{T}}
\newcommand{\Rt}{\mathcal{R}_{\turing}}
\newcommand{\leqt}{\leq_{\turing}}
\newcommand{\length}{\mathrm{length}}
\newcommand{\fin}{\texttt{fin}}
\newcommand{\defeq}{=_{\mathrm{def}}}
\newcommand{\bm}[1]{\mathbf{#1}}

\begin{document}

\begin{frontmatter}



\title{Towards characterizing the $>\omega^2$-fickle\\ recursively enumerable Turing degrees}
\author{Liling Ko}
\ead{ko.390@osu.edu}
\ead[url]{sites.nd.edu/liling-ko/}



\begin{abstract}
    Given a finite lattice $L$ that can be embedded in the recursively enumerable (\renospace) Turing degrees $\langle\Rt,\leqt\rangle$, it is not known how one can characterize the degrees $\mathbf{d}\in\Rt$ below which $L$ can be bounded. Two important characterizations are of the $L_7$ and $M_3$ lattices, where the lattices are bounded below $\bm{d}$ if and only if $\bm{d}$ contains sets of ``\emph{fickleness}'' $>\omega$ and $\geq\omega^\omega$ respectively. We work towards finding a lattice that characterizes the levels above $\omega^2$, the first non-trivial level after $\omega$. We considered lattices that are as ``short'' and ``narrow'' as $L_7$ and $M_3$, but the lattices characterize also the $>\omega$ or $\geq\omega^\omega$ levels, if the lattices are not already embeddable below all non-zero \re degrees. We also considered upper semilattices (USLs) by removing the bottom meet(s) of some previously considered lattices, but the removals did not change the levels characterized. This leads us to conjecture that a USL characterizes the same \re degrees as the lattice it is based on. We discovered three lattices besides $M_3$ that also characterize the $\geq\omega^\omega$-levels. Our search for a $>\omega^2$-candidate therefore involves the lattice-theoretic problem of finding lattices that do not contain any of the four $\geq\omega^\omega$-lattices as sublattices.

\end{abstract}



\begin{keyword}
    recursively enumerable \sep Turing degrees \sep fickleness \sep computable approximation \sep hierarchy \sep lattice \sep embedding


    \MSC 03D25 \sep 03D55
\end{keyword}

\end{frontmatter}


\section*{Acknowledgements}
This work has been partially supported by Peter Cholak's NSF grant DMS-1854136, and also by funding from Rosalie O'Mahony for women in Mathematics in the University of Notre Dame. The author is grateful to Steffen Lempp for valuable discussion, and indebted to Peter Cholak for his support over many years.

\section{Introduction}
A long standing open problem in the study of the recursively enumerable (r.e.) Turing degrees $\langle \Rt,\leqt\rangle$ involves identifying the lattices $\mathcal{L}=\langle L,\leq,\cap,\cup\rangle$ that can be embedded in $\Rt$, and characterizing the degrees below which such $L$ can be bounded. Here, a lattice is a partial order $\leq$ such that every pair of elements $\bm{a},\bm{b}\in L$ has a join $\bm{a}\cup\bm{b}\in L$, which is the lowest upper bound of the pair, and a meet $\bm{a}\cap\bm{b}\in L$, which is the greatest lower bound of the pair. It is known that $\langle \Rt,\leqt\rangle$ forms an upper semilattice (USL) with minimal element $\bm{0}$ and greatest element $\bm{0}'$. As an USL, the join between any pair must exist, but the meet may not~\citep{ambos1984pairs}. By the minimal pair priority argument~\citep{yates1966minimal,lachlan1966lower}, one can construct incomparable pairs whose meet exists, and therefore embed the diamond lattice (Figure~\ref{fig:diamond}) in $\Rt$. One can also generalize the construction to embed below any nonzero \re degree, any lattice that does not contain $N_5$ or $M_3$ (Figure~\ref{fig:N5-131}) as sublattices~\citep{thomason1971sublattices,lachlan1972embedding}. Lattices that avoid $N_5$ and $M_3$ are called \emph{distributive} lattices~\citep{birkhoff1937rings}, because they can be characterized as lattices $L$ whose meets distribute over the joins:
\[(\forall \bm{a},\bm{b},\bm{c} \in L)\;\; [\bm{a}\cap (\bm{b}\cup\bm{c}) =(\bm{a}\cap\bm{b}) \cup(\bm{a}\cap\bm{c})].\]

\begin{figure}[tpb]
    \centering
    \begin{tikzpicture}[every node/.style={circle,fill=black,inner sep=1.2pt}]
        \tikzset{diamond/.pic ={
        \node[label=below:{$\bm{a} \cap\bm{b}$}] (d) at (0, -1) {};
        \node[label=left:{$\bm{a}$}] (l) at (-1, 0) {};
        \node[label=above:{$\bm{a} \cup\bm{b}$}] (u) at (0, 1) {};
        \node[label=right:{$\bm{b}$}] (r) at (1, 0) {};
        \draw [-,thick] (u) -- (r) -- (d) -- (l) -- (u);
        }}
        \path (0,0) pic[scale=0.5]{diamond};
    \end{tikzpicture}
    \caption{The diamond lattice can be embedded in $\Rt$~\citep{yates1966minimal,lachlan1966lower}.}
    \label{fig:diamond}
\end{figure}

There is no known procedure for deciding if a given non-distributive lattice can be embedded in $\Rt$. To sketch the challenges faced, consider Birkhoff's representation Theorem~\ref{thm:distributive}:
\begin{Definition} \label{def:join}
    Let $L$ be a finite lattice.
    \begin{enumerate}
        \item An element $\bm{b}\in L$ is \emph{join-irreducible} if $\bm{b}=\bm{a_0}\cup\bm{a_1}$ implies $\bm{b}=\bm{a_0}$ or $\bm{b}=\bm{a_1}$.
        \item An element $\bm{b}\in L$ is \emph{join-prime} if $\bm{b}\leq\bm{a_0}\cup\bm{a_1}$ implies $\bm{b}\leq\bm{a_0}$ or $\bm{b}\leq\bm{a_1}$.
    \end{enumerate}
\end{Definition}

\begin{Theorem}[Birkhoff's Representation \citep{birkhoff1937rings}] \label{thm:distributive}
    Non-distributive lattices must contain a join-irreducible element that is not join-prime.
\end{Theorem}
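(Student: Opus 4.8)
The plan is to prove the contrapositive: if every join-irreducible element of the finite lattice $L$ is join-prime, then $L$ is distributive, i.e.\ $\bm{a}\cap(\bm{b}\cup\bm{c}) = (\bm{a}\cap\bm{b})\cup(\bm{a}\cap\bm{c})$ for all $\bm{a},\bm{b},\bm{c}\in L$. One of the two inequalities holds in every lattice: since $\bm{a}\cap\bm{b}$ and $\bm{a}\cap\bm{c}$ each lie below $\bm{a}$ and below $\bm{b}\cup\bm{c}$, they lie below $\bm{a}\cap(\bm{b}\cup\bm{c})$, hence so does their join. So the entire content is the reverse inequality $\bm{a}\cap(\bm{b}\cup\bm{c}) \leq (\bm{a}\cap\bm{b})\cup(\bm{a}\cap\bm{c})$.

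First I would establish the standard structural lemma for finite lattices: every $\bm{x}\in L$ is the join of the join-irreducible elements below it. This goes by induction along $\leq$: if $\bm{x}$ is itself join-irreducible (which, under the definition in the excerpt, includes the bottom element, viewed as the empty join) there is nothing to prove; otherwise $\bm{x}=\bm{y}\cup\bm{z}$ with $\bm{y},\bm{z}<\bm{x}$, and the induction hypothesis writes each of $\bm{y},\bm{z}$ as a join of join-irreducibles, all of which remain $\leq\bm{x}$, so their combined join recovers $\bm{x}$. Finiteness is exactly what makes this induction terminate, and Definition~\ref{def:join} already restricts attention to finite $L$, so the lemma is legitimately available.

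With the lemma in hand, to prove $\bm{a}\cap(\bm{b}\cup\bm{c}) \leq (\bm{a}\cap\bm{b})\cup(\bm{a}\cap\bm{c})$ it suffices to show every join-irreducible $\bm{p}\leq\bm{a}\cap(\bm{b}\cup\bm{c})$ satisfies $\bm{p}\leq(\bm{a}\cap\bm{b})\cup(\bm{a}\cap\bm{c})$; taking the join over all such $\bm{p}$ then gives the inequality on the left-hand side. So fix such a $\bm{p}$. Then $\bm{p}\leq\bm{a}$ and $\bm{p}\leq\bm{b}\cup\bm{c}$. By the hypothesis, $\bm{p}$ is join-prime, so $\bm{p}\leq\bm{b}$ or $\bm{p}\leq\bm{c}$; without loss of generality $\bm{p}\leq\bm{b}$, and combining with $\bm{p}\leq\bm{a}$ yields $\bm{p}\leq\bm{a}\cap\bm{b}\leq(\bm{a}\cap\bm{b})\cup(\bm{a}\cap\bm{c})$. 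This completes the distributive law, and its contrapositive is precisely the assertion of the theorem.

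For completeness I would also record the easy remark that join-prime always implies join-irreducible, so the dichotomy in the statement is genuine: if $\bm{b}$ is join-prime and $\bm{b}=\bm{a_0}\cup\bm{a_1}$, then $\bm{b}\leq\bm{a_0}$ or $\bm{b}\leq\bm{a_1}$, and since each $\bm{a_i}\leq\bm{b}$ we get $\bm{b}=\bm{a_0}$ or $\bm{b}=\bm{a_1}$. I do not expect a serious obstacle anywhere; the one place requiring care is the structural lemma, where one must treat the bottom element correctly (it is join-irreducible and vacuously join-prime under the excerpt's conventions) and make sure the induction is framed so that it terminates, which is where finiteness of $L$ is used essentially.
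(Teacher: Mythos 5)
The paper offers no proof of Theorem~\ref{thm:distributive}; it is quoted from Birkhoff's 1937 paper as a known classical fact, so there is no internal argument to compare against. Your proposal is the standard proof of this direction of Birkhoff's characterization and is correct: the structural lemma (every element of a finite lattice is the join of the join-irreducibles below it), the reduction of the nontrivial inequality $\bm{a}\cap(\bm{b}\cup\bm{c})\leq(\bm{a}\cap\bm{b})\cup(\bm{a}\cap\bm{c})$ to join-irreducible elements, and the appeal to join-primeness are all sound, and you correctly flag the only delicate point, namely that under Definition~\ref{def:join} the bottom element counts as join-irreducible and is also join-prime, so it causes no trouble in the induction. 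One small caveat of interpretation: the surrounding prose defines ``distributive'' as avoiding $N_5$ and $M_3$ as sublattices and merely asserts equivalence with the displayed identity; you prove the theorem for the identity formulation, which is the natural reading given the displayed law, but a fully self-contained treatment of the paper's wording would also want the easy observation that a lattice containing $N_5$ or $M_3$ violates the identity when evaluated on the generators of that sublattice.
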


For $N_5$ and $M_3$, the witnesses to Birkhoff's theorem are shown in Figure~\ref{fig:N5-131} as $\bm{b} \leq\bm{a_0}\cup\bm{a_1}$, which cannot be computed by $\bm{a_0}$ or $\bm{a_1}$. When we construct an \re set $B$ whose degree $\bm{b}$ satisfies these requirements, the diagonalization condition $\bm{b}\nleq\bm{a_0}$ forces us to enumerate elements into $B$. However, the join requirement $\bm{b}\leq\bm{a_0}\cup\bm{a_1}$ says that whenever we enumerate into $B$, we must also enumerate into $A_0$ or $A_1$. Therefore, at least two permissions are requested --- one for $B$ and another for $A_0$ or $A_1$. Distributive lattices work to the effect of involving no joins, so a single permission is enough, allowing any nonzero \re degree to bound the lattice. In non-distributive lattices, joins prevent sets from being constructed independently. These dependencies can become complicated or even contradictory, preventing some lattices, such as $S_8$~\citep{lachlan1980not} and $L_{20}$\citep{lempp1997finite} (Figure~\ref{fig:s8-l20}), from being embedded at all.

The important non-distributive lattices that are known to be embeddable include $N_5$, $M_3$, and $L_7$. The ability for an \re degree $\bm{d}$ to bound these lattices depends only on the ``\emph{fickleness}'' of the sets in $\bm{d}$, with degrees that bound $N_5$ ($L_7$, $M_3$) being exactly those that contain sets of fickleness $>1$ ($>\omega$, $\geq\omega^\omega$)~\citep{downey2007totally,ambos2019universally,downey2020hierarchy}. Here, set fickleness can be thought of as the number of times elements change their minds on their membership in the set. For instance, \re sets have fickleness $\leq1$ since elements are allowed to enter but not exit, and in a similar vein, $n$-\re sets have fickleness $\leq n$. Sets in an \re degree that are not \re may have fickleness $\geq\omega$, and we define a degree's fickleness to be the smallest ordinal $\alpha$ that bounds the fickleness of all sets in the degree. We formalize these definitions in Section~\ref{sec:fickle} \citep{downey2020hierarchy}.

\begin{figure}[tpb]
    \centering
    \tikzset{every picture/.style={thick}}
    \begin{tikzpicture}[every node/.style={circle,fill=black,inner sep=1.2pt}]
        \tikzset{pics/131/.style n args={1}{code ={
            \node (a) at (0, -1) {};
            \node[label=left:{$\bm{a_0}$}] (b) at (-1, 0) {};
            \node (c) at (0, 1) {};
            \node[label=right:{$\bm{a_1}$}] (d) at (1, 0) {};
            \node[label=left:{$\bm{b}$}] (e) at (0, 0) {};
            \draw (a) -- (b) -- (c) -- (d) -- (a) -- (e) -- (c);
            \node[rectangle,align=left,draw=none,fill=none] at (0, -1.5) {$M_3$};
            }}, pics/131/.default={}
        }
        \tikzset{n5/.pic ={
            \node (a) at (0, -1) {};
            \node[label=left:{$\bm{a_0}$}] (b) at (-1, 0) {};
            \node (c) at (0, 1) {};
            \node[label=right:{$\bm{b}$}] (d) at (1, 0.5) {};
            \node[label=right:{$\bm{a_1}$}] (e) at (1, -0.5) {};
            \draw (a) -- (b) -- (c) -- (d) -- (e) -- (a);
            \node[rectangle,align=left,draw=none,fill=none] at (0, -1.5) {#1};
        }}
        \path (-2,0) pic[scale=0.6]{n5=$N_5$};
        \path (2,0) pic[scale=0.6]{131};
    \end{tikzpicture}
    \caption{Non-distributive lattices are those that contain $N_5$ or $M_3$ as sublattices \citep{birkhoff1937rings}.}
    \label{fig:N5-131}
\end{figure}

\begin{figure}[tpb]
    \centering
    \tikzset{every picture/.style={thick}}
    \begin{tikzpicture}[every node/.style={circle,fill=black,inner sep=1.2pt}]
        \tikzset{pics/s8/.style n args={1}{code ={
            \node (x) at (-0.5, 1.5) {};
            \node (y) at (0.5, 1.5) {};
            \node (z) at (0, 2) {};
            \node (a) at (0, -1) {};
            \node (b) at (-1, 0) {};
            \node (c) at (0, 1) {};
            \node (d) at (1, 0) {};
            \node (e) at (0, 0) {};
            \draw (a)--(b)--(c)--(d)--(a)--(e)--(c) (c)--(x)--(z)--(y)--(c);
            \node[rectangle,align=left,draw=none,fill=none] at (0, -1.5) {$S_8$};
            }}, pics/s8/.default={}
        }
        \tikzset{pics/lempp/.style n args={1}{code ={
            \node[draw=none,fill=none,align=left] (l) at (0,0.5) {\includegraphics[width=1.5cm]{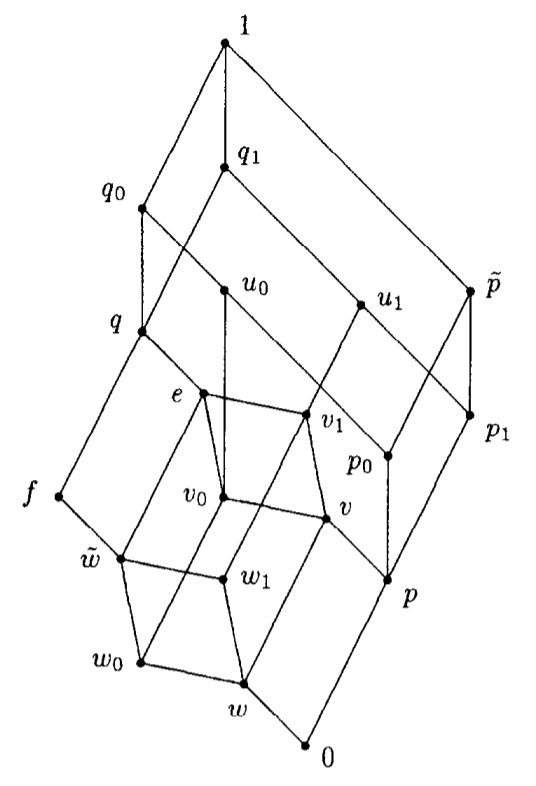}};
            \node[rectangle,align=left,draw=none,fill=none] at (0,-3.0) {$L_{20}$};
            }}, pics/lempp/.default={}
        }
        \path (0,0) pic[scale=0.5]{s8};
        \path (4,0) pic[scale=0.3]{lempp};
    \end{tikzpicture}
    \caption{$S_8$~\citep{lachlan1980not} and $L_{20}$~\citep{lempp1997finite} cannot be embedded in $\Rt$.}
    \label{fig:s8-l20}
\end{figure}

\begin{figure}[tpb]
    \centering
    \tikzset{every picture/.style={thick}}
    \begin{tikzpicture}[every node/.style={circle,fill=black,inner sep=1.2pt}]
        \tikzset{l7/.pic ={
            \node (a) at (0, -1) {};
            \node (b) at (-1, 0) {};
            \node (c) at (0, 1) {};
            \node (d) at (1, 0) {};
            \node (e) at (0, 0) {};
            \draw (a) -- (b) -- (c) -- (d) -- (a);
            \node (f) at (-0.5, -0.5) {};
            \node (g) at (0.5, -0.5) {};
            \draw (c)--(e)--(f) (e)--(g);
            \node[align=left,draw=none,fill=none] at (0,-1.3) {$L_7$};
        }}
        \path (0,0) pic[scale=0.5]{l7};
    \end{tikzpicture}
    \caption{An \re degree bounds $L_7$ if and only if it contains an $>\omega$-fickle set~\citep{downey2007totally,ambos2019universally}.}
    \label{fig:l7}
\end{figure}

The fickleness hierarchy of the \re degrees was introduced by \citeauthor{downey2020hierarchy} in \citeyear{downey2020hierarchy}, and was based on Ershov's work on $n$-\re sets \citep{ershov1968certain1,ershov1968certain2,ershov1968certain3}. The hierarchy collapses exactly to the powers of $\omega$, making the first few levels at $1$, $\omega$, $\omega^2$, $\ldots$, $\omega^\omega$, $\omega^{\omega+1}$, $\ldots$ \citep{downey2020hierarchy}, where the 1-fickle \re degrees contains just the zero degree. From earlier discussion, all known lattices characterize fickleness at the $>1$, $>\omega$, or $\geq\omega^\omega$-levels, which begs the question of whether there are lattices that characterize the degrees above the second non-trivial level $\omega^2$. This is the motivation of our work and the question remains open.

We search for $>\omega^2$-candidates by considering lattices that are as ``\emph{short}'' and ``\emph{narrow}'' as $L_7$ and $M_3$, like those in Figure~\ref{fig:3direct}. In Section~\ref{sec:sufficient}, we discuss the fickleness that is sufficient for bounding these lattices, and in Section~\ref{sec:necessary} we discuss the fickleness that is necessary. Including $M_3$, we now have four lattices that characterize $\geq\omega^\omega$-fickleness. In Section~\ref{sec:reject}, we use these four to quickly reject ``larger'' candidates, for $>\omega^2$-candidates cannot contain any of those four overly fickle ones as sublattices. In Section~\ref{sec:usl}, we consider some USLs as candidates, obtained by removing the meet(s) of earlier lattices. We show that these USLs characterize the same degrees as their lattices, which leads us to conjecture in \ref{conj:usl-equals-lattice} that all USLs behave this way. We suggest future work in Section~\ref{sec:future}.

\subsection{Defining $\alpha$-fickleness} \label{sec:fickle}
We formalize what it means for an \re set or degree to be $\alpha$-fickle. By Shoenfield's Limit Lemma, every $\Delta^0_2$ set $A$ has a \emph{computable approximation} $a(x,s):\omega^2\to\omega$, such that for all $x,s\in\omega$:
\[A(x) =\lim_s a(x,s).\]

We ``count'' how often $a(x,-)$ changes its mind:
\begin{Definition}[\citep{ershov1968certain1,ershov1968certain2,ershov1968certain3}] ~\label{def:comp-approx}
    Let $\mathcal{R} =(R, <_\mathcal{R})$ be a computable well-ordering (both $R$ and $<_\mathcal{R}$ are computable), and let $A$ be a $\Delta^0_2$ set. An $\mathcal{R}$-computable approximation ($\mathcal{R}$-c.a.) of $A$ is a computable approximation $a(x,s):\omega^2\to\omega$ of $A$ together with a computable \emph{mind-change function} $m(x,s):\omega^2\to R$, such that for all $x$ and $s$:
    \begin{itemize}
        \item $a(x,0)=0$,
        \item $m(x,s+1) \leq_\mathcal{R} m(x,s)$,
        \item if $a(x,s+1) \neq a(x,s)$, then $m(x,s+1) <_\mathcal{R} m(x,s)$.
    \end{itemize}
\end{Definition}

We think of $\alpha$ as the order type of $R$. $a$ is a computable guessing function for $A$, and $m$ associates an ordinal with each guess, so the initial ordinal $m(x,0)$ should represent $\alpha$. Like in the usual approximation for \re sets, the set is empty at first, so our first guess is $a(x,0)=0$ to represent that $x\notin A$. If $a(x,s+1)\neq a(x,s)$, then the ordinal must decrease. After the first mind-change, the ordinal is represented by an element of $R$.

The ideas behind this definition were first formalized by Ershov~\citep{ershov1968certain1,ershov1968certain2,ershov1968certain3}, who referred to a set as $\alpha$-\re if $\alpha$ is the order type of $\mathcal{R}$, and if $\mathcal{R}$ satisfied the additional computable structure of being represented by a notation. Ershov's notion of being $\alpha$-\re was defined for $\alpha<\omega_1^{\mathrm{CK}}$. 
There is an equivalent version given by \citeauthor{epstein1981hierarchies}, and there is another version given by \citeauthor{ash2000computable}.

For the notion of fickleness to be independent from the $\mathcal{R}$ used, even stronger computable structure needs to be imposed on $\mathcal{R}$. To these ends, Downey and Greenberg required $\mathcal{R}$ to be \emph{canonical}, which is stronger than having a notation:
\begin{Definition}[\cite{downey2020hierarchy}] \label{def:canonical}
    Let $\mathcal{R}$ be a computable well-ordering with \emph{order type} $\alpha$. Given $z\in R$, let $|z|$ denote the order type of $z$. Let $\mathrm{nf}_\mathcal{R}: \omega\rightarrow (\omega^2)^{<\omega}$ denote the function that takes each ordinal below $\alpha$ to its Cantor-normal form; i.e. $\forall z\in R$
    \[\mathrm{nf}_\mathcal{R}(z) = \langle \langle z_0, n_0 \rangle, \ldots, \langle z_i, n_i\rangle \rangle,\]
    where $z_j\in R$, $|z_0| > \ldots > |z_i|$, $n_j \in \omega-\{0\}$, and
    \[|z| = \omega^{|z_0|}\cdot n_0 + \ldots + \omega^{|z_i|} \cdot n_i\]
    is the unique Cantor-normal form of $|z|$. If $\mathrm{nf}_\mathcal{R}$ is computable, we say that $\mathcal{R}$ is \emph{canonical}.
\end{Definition}

\begin{Definition}[\cite{downey2020hierarchy}] \label{def:fickleness}
    Let $A,\mathbf{d} \in\Delta^0_2$.
    \begin{enumerate}
        \item $A$ is \emph{$\leq\alpha$-fickle} if $A$ has a canonical $\mathcal{R}$-c.a.\ of order type $\alpha$.
        \item $A$ is \emph{$\alpha$-fickle} if $\alpha$ is the smallest ordinal such that $A$ is $\leq\alpha$-fickle.
        \item $\mathbf{d}$ is \emph{$\leq\alpha$-fickle} if every set in $\mathbf{d}$ is $\leq\alpha$-fickle.
        \item $\mathbf{d}$ is \emph{$\alpha$-fickle} if $\alpha$ is the smallest ordinal such that $\mathbf{d}$ is $\leq\alpha$-fickle.
    \end{enumerate}
\end{Definition}

Note that being $\alpha$-fickle was first introduced by \citeauthor{downey2020hierarchy} as being \emph{properly totally $\alpha$-computably approximable}. We use the term fickleness for succinctness. By canonicalness, if $\alpha$ is \emph{reasonably small}:
\[\alpha <\epsilon_0, \mathrm{where } \epsilon_0=\sup\{\omega, \omega^\omega, \omega^{\omega^\omega}, \ldots\},\]
then in Definition~\ref{def:fickleness}, the choice of $\mathcal{R}$ would not matter~\citep{downey2020hierarchy}, because we would be able to switch from one computable approximation to another recursively. Restricting to smalls ordinals is enough for now because the embeddability results discussed do not involve ordinals exceeding $\omega^\omega<\epsilon_0$. Also, note that when a set $A$ is $\alpha$-fickle for some $\alpha\geq\omega$, the ``number of mind changes'' of the set is actually still finite, in the sense that $\lim_s A(x,s)$ exists for all $x\in\omega$; nonetheless the smallest canonical computable approximation for $A$ has an infinite order type.

The fickleness hierarchy collapses to the powers of $\omega$:
\begin{Theorem}[\cite{downey2020hierarchy}] \label{thm:collapse}
    Let $\alpha\leq \epsilon_0$. $\alpha$-fickle degrees exist if and only if $\alpha$ is a power of $\omega$. Also, an \re degree is 1-fickle if and only if the degree is the zero degree.
\end{Theorem}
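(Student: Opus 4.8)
The plan is to split the theorem into three tasks: identifying the $1$-fickle degrees, realising every power $\omega^\delta\leq\epsilon_0$ as a fickleness, and ruling out every ordinal that is not such a power. I would handle the first directly and make the other two rest on a single collapse lemma.

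For the $1$-fickle degrees: recall that the $\leq n$-fickle sets are precisely the $n$-\re sets, so the $\leq 1$-fickle sets are exactly the \re sets. Hence if a degree $\bm{d}$ is $\leq 1$-fickle and $D\in\bm{d}$ is \renospace, its complement $\bar D\equiv_\turing D$ also lies in $\bm{d}$, so $\bar D$ is \re too, $D$ is computable, and $\bm{d}=\bm{0}$. Conversely every computable set is \renospace, so $\bm{0}$ is $\leq 1$-fickle and hence exactly $1$-fickle (no smaller ordinal being available); with the first implication, $\bm{0}$ is the only $1$-fickle degree. This also realises $\omega^0=1$.

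For the remaining two tasks I would aim to establish the following \emph{Collapse Lemma}: whenever $\omega^\delta\leq\alpha<\omega^{\delta+1}$, a degree is $\leq\alpha$-fickle if and only if it is $\leq\omega^\delta$-fickle. Granting it, the non-existence half is immediate: if $\bm{d}$ is $\alpha$-fickle and $\omega^\delta$ is the largest power of $\omega$ with $\omega^\delta\leq\alpha$, then $\alpha\neq\omega^\delta$ would force $\omega^\delta\leq\alpha<\omega^{\delta+1}$, so $\bm{d}$ would be $\leq\omega^\delta$-fickle, contradicting the minimality of $\alpha$; hence $\alpha=\omega^\delta$. For the existence half at a power $\omega^\delta\leq\epsilon_0$ with $\delta>0$, I would run a priority construction of an \re set $A$ that is simultaneously: (i) $\leq\omega^\delta$-fickle as a degree --- arranging (using a fixed canonical well-ordering of order type $\omega^\delta$ to drive the construction) that every set computable from $A$ has a canonical c.a.\ of order type $\omega^\delta$; and (ii) not $\leq\beta$-fickle for any $\beta<\omega^\delta$, by diagonalising $A$ itself against every candidate canonical c.a.\ of order type $\beta<\omega^\delta$ --- such a candidate can spend its mind-change ordinal only as the computable Cantor-normal-form structure of an order type $\beta$ permits, and one forces more genuine changes on the relevant column of $A$ than it has budgeted. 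By the Collapse Lemma, (i) and (ii) pin the fickleness of $\deg(A)$ at $\omega^\delta$ exactly.

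I expect the substantial work to lie in three places, all carried out in~\citep{downey2020hierarchy}: the ``only if'' direction of the Collapse Lemma for $\delta\geq1$, its base case $\delta=0$, and the proper-fickleness construction just sketched (a priority argument whose complexity grows with $\delta$). The delicate point in the lemma for $\delta\geq1$ is that its \emph{set}-level analogue fails --- e.g.\ an $\omega$-c.a.\ set carries a \emph{computable} bound on its number of mind-changes, a property not shared by all $\Delta^0_2$ sets --- so a naive online relabelling of the mind-change function cannot work, since one learns the size of the residual budget only when the ordinal first drops below $\omega^\delta$. What saves the degree-level statement is the freedom to use \emph{every} set computable from $\bm{d}$, together with the computability of $\mathrm{nf}_{\mathcal{R}}$, which lets one peel off the finitely many top ``$\omega^\delta$-blocks'' of an approximation, recurse on $\delta$ for the residual pieces (of order type below $\omega^\delta$), and splice the results computably into one canonical c.a.\ of order type $\omega^\delta$. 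The base case $\delta=0$ is different in kind: it amounts to the fact that the difference hierarchy does not collapse inside any non-zero degree, so the only $\leq n$-fickle degree for finite $n$ is $\bm{0}$.
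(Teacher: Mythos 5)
This theorem is stated in the paper purely as a citation of \citep{downey2020hierarchy}; the paper offers no proof of its own, so there is nothing internal to compare your argument against line by line. Judged on its own terms, your outline is a faithful reconstruction of how the result is actually established in the cited monograph: the decomposition into (a) the finite levels collapsing to $\bm{0}$, (b) a collapse lemma identifying the $\leq\alpha$-fickle degrees with the $\leq\omega^\delta$-fickle ones for $\omega^\delta\leq\alpha<\omega^{\delta+1}$, and (c) a priority construction realizing each power of $\omega$, is exactly the architecture of Downey and Greenberg's proof, and you correctly locate where the substantial work lies (the only-if direction of the collapse, the $\delta=0$ case, and the properness construction) and correctly defer it to the reference. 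Two small points worth making explicit. First, your identification of the $\leq 1$-fickle sets with the \re sets depends on the convention, stated informally after Definition~\ref{def:comp-approx}, that the initial value $m(x,0)$ represents $\alpha$ itself rather than an element of $R$, so that one mind change is available even when $R$ has order type $1$; without that convention the order-type-$1$ case degenerates to the empty set. Second, your existence construction proves $\leq\omega^\delta$-fickleness for every set computable from $A$ rather than merely every set in $\deg(A)$; this is harmless (it is stronger, and is in fact the form in which the cited source works), but it is worth noting that the paper's Definition~\ref{def:fickleness} quantifies only over sets in the degree.
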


Recall that the \re degrees that bound $N_5$ ($L_7$, $M_3$) are exactly those that contain sets of fickleness $>1$ ($>\omega$, $\geq\omega^\omega$ respectively). From Theorem~\ref{thm:collapse}, the first non-trivial level that has not been characterized is at $\omega^2$, which makes our search for a $>\omega^2$-lattice meaningful.

\subsection{Some ``Short'' and ``Narrow'' Lattices} \label{sec:3direct}
We consider $>\omega^2$-candidates that are as ``short'' and ``narrow'' as $L_7$ and $M_3$. We introduce the notion of \emph{3-directness} to describe such smallish lattices:

\begin{Definition} \label{def:direct}
    A lattice $L$ is \emph{$n$-direct} if $L$ contains $n$ incomparable elements $\bm{a_0},\ldots,\bm{a_{n-1}}$, and every element in $L$ is of the form $\bigcup_{i\in I} \bm{a_i}$ or $\bigcap_{i\in I} \bm{a_i}$, where $I\subseteq \{0,\ldots,n-1\}$.
\end{Definition}

For instance, $L_7$ is 3-direct because we can name the middle element $\bm{a_1}$ and the two at the side $\bm{a_0}$ and $\bm{a_2}$. Then the top element will be $\bm{a_0}\cup\bm{a_1} =\bm{a_0}\cup\bm{a_2} =\bm{a_1}\cup\bm{a_2}$, the lower two elements will be $\bm{a_0}\cap\bm{a_1}$ and $\bm{a_1}\cap\bm{a_2}$, and the bottom element will be $\bm{a_0}\cap\bm{a_2} =\bm{a_0}\cap\bm{a_1}\cap\bm{a_2}$. Figure~\ref{fig:lempp-lerman} gives two lattices that are not 3-direct, for they each contain more than three incomparable elements.

\begin{figure}[tpb]
    \centering
    \tikzset{every picture/.style={thick}}
    \begin{tikzpicture}[every node/.style={circle,fill=black,inner sep=1.2pt}]
        \tikzset{131/.pic ={
            \node (a) at (0, -1) {};
            \node (b) at (-1, 0) {};
            \node (c) at (0, 1) {};
            \node (d) at (1, 0) {};
            \node (e) at (0, 0) {};
            \draw (a) -- (b) -- (c) -- (d) -- (a) -- (e) -- (c);
        }}
        \tikzset{oo1/.pic ={
            \node (d) at (0,-1) {};
            \node (l) at (-1,0) {};
            \node (u) at (0,1) {};
            \node (r) at (1,0) {};
            \node (c) at (0,0) {};
            \node (ul) at (-0.5,0.5) {};
            \node (dr) at (0.5,-0.5) {};
            \draw (dr) --(ul) --(u) --(r) --(d) --(l) --(ul);
        }}
        \tikzset{oo2/.pic ={
            \node (d) at (0,-1) {};
            \node (l) at (-1,0) {};
            \node (u) at (0,1) {};
            \node (r) at (1,0) {};
            \node (c) at (0,0) {};
            \node (ul) at (-0.5,0.5) {};
            \draw (ul) --(u) --(r) --(d) --(l) --(ul) --(c) --(d);
        }}
        \tikzset{oo3/.pic ={
            \node (d) at (0,-1) {};
            \node (l) at (-1,0) {};
            \node (u) at (0,1) {};
            \node (r) at (1,0) {};
            \node (c) at (0,0) {};
            \node (dl) at (-0.5,-0.5) {};
            \draw (dl) --(c) --(u) --(r) --(d) --(l) --(u);
        }}
        \tikzset{l7/.pic ={
            \node (a) at (0, -1) {};
            \node (b) at (-1, 0) {};
            \node (c) at (0, 1) {};
            \node (d) at (1, 0) {};
            \node (e) at (0, 0) {};
            \draw (a)--(b)--(c)--(d)--(a);
            \node (f) at (-0.5, -0.5) {};
            \node (g) at (0.5, -0.5) {};
            \draw (c)--(e)--(f) (e)--(g);
        }}
        \tikzset{a0/.pic ={
            \node (d) at (0,-1) {};
            \node (l) at (-1,0) {};
            \node (u) at (0,1) {};
            \node (r) at (1,0) {};
            \node (c) at (0,0) {};
            \node (ul) at (-0.5,0.5) {};
            \node (dl) at (-0.5,-0.5) {};
            \draw (ul) --(u) --(r) --(d) --(l) --(ul) --(c) --(dl);
        }}
        \tikzset{a1/.pic ={
            \node (d) at (0,-1) {};
            \node (l) at (-1,0) {};
            \node (u) at (0,1) {};
            \node (r) at (1,0) {};
            \node (c) at (0,0) {};
            \node (ul) at (-0.5,0.5) {};
            \node (dl) at (-0.5,-0.5) {};
            \node (dr) at (0.5,-0.5) {};
            \draw (ul) --(u) --(r) --(d) --(l) --(ul) --(c) --(dl);
            \draw (c) --(dr);
        }}
        \tikzset{a2/.pic ={
            \node (d) at (0,-1) {};
            \node (l) at (-1,0) {};
            \node (u) at (0,1) {};
            \node (r) at (1,0) {};
            \node (c) at (0,0) {};
            \node (ul) at (-0.5,0.5) {};
            \node (ur) at (0.5,0.5) {};
            \node (dr) at (0.5,-0.5) {};
            \draw (ul) --(u) --(r) --(d) --(l) --(ul) --(c) --(dr);
            \draw (c)--(ur);
        }}
        \tikzset{a3/.pic ={
            \node (d) at (0,-1) {};
            \node (l) at (-1,0) {};
            \node (u) at (0,1) {};
            \node (r) at (1,0) {};
            \node (c) at (0,0) {};
            \node (ul) at (-0.5,0.5) {};
            \node (ur) at (0.5,0.5) {};
            \draw (ul) --(u) --(r) --(d) --(l) --(ul) --(c) --(d);
            \draw (c) --(ur);
        }}
        \tikzset{a4/.pic ={
            \node (d) at (0,-1) {};
            \node (l) at (-1,0) {};
            \node (u) at (0,1) {};
            \node (r) at (1,0) {};
            \node (c) at (0,0) {};
            \node (ul) at (-0.5,0.5) {};
            \node (ur) at (0.5,0.5) {};
            \node (dl) at (-0.5,-0.5) {};
            \node (dr) at (0.5,-0.5) {};
            \draw (ul) --(u) --(r) --(d) --(l) --(ul) --(dr);
            \draw (ur) --(dl);
        }}
        \tikzset{b0/.pic ={
            \node (1) at (0,1) {};
            \node (0) at (0,-1) {};
            \node (c) at (0,0) {};
            \node (cl) at (-1,0) {};
            \node (cr) at (1,0) {};
            \node (u) at (0,0.5) {};
            \node (ul) at (-1,0.5) {};
            \node (ur) at (1,0.5) {};
            \node (d) at (0,-0.5) {};
            \node (dl) at (-1,-0.5) {};
            \node (dr) at (1,-0.5) {};
            \draw (1) --(ur) --(c) --(ul) --(1);
            \draw (u) --(cr) --(d) --(cl) --(u);
            \draw (c) --(dr) --(0) --(dl) --(c);
            \draw (ul)--(dl) (ur)--(dr) (1)--(u) (0)--(d);
        }}
        \tikzset{b1/.pic ={
            \node (1) at (0,1) {};
            \node (0) at (0,-1) {};
            \node (u) at (0,0.5) {};
            \node (ul) at (-1,0.5) {};
            \node (ur) at (1,0.5) {};
            \node (d) at (0,-0.5) {};
            \node (dl) at (-1,-0.5) {};
            \node (dr) at (1,-0.5) {};
            \draw (1) --(ur) --(d) --(ul) --(1);
            \draw (u) --(dr) --(0) --(dl) --(u);
            \draw (ul) --(dl) (ur)--(dr) (1)--(u) (0)--(d);
        }}
        \tikzset{b2/.pic ={
            \node (1) at (0,1) {};
            \node (0) at (0,-1) {};
            \node (c) at (0,0) {};
            \node (cl) at (-1,0) {};
            \node (cr) at (1,0) {};
            \node (u) at (0,0.5) {};
            \node (ul) at (-1,0.5) {};
            \node (ur) at (1,0.5) {};
            \node (dl) at (-0.5,-0.5) {};
            \draw (1) --(ur) --(c) --(ul) --(1);
            \draw (u) --(cr) --(0) --(cl) --(u);
            \draw (ul) --(cl) (ur)--(cr) (1)--(u) (c)--(dl);
        }}
        \tikzset{b3/.pic ={
            \node (1) at (0,1) {};
            \node (0) at (0,-1) {};
            \node (c) at (0,0) {};
            \node (cl) at (-1,0) {};
            \node (cr) at (1,0) {};
            \node (u) at (0,0.5) {};
            \node (ul) at (-1,0.5) {};
            \node (ur) at (1,0.5) {};
            \node (dl) at (-0.5,-0.5) {};
            \node (dr) at (0.5,-0.5) {};
            \draw (1) --(ur) --(c) --(ul) --(1);
            \draw (u) --(cr) --(0) --(cl) --(u);
            \draw (ul) --(cl) (ur)--(cr) (1)--(u) (dl)--(c) --(dr);
        }}
        \tikzset{b4/.pic ={
            \node (1) at (0,1) {};
            \node (0) at (0,-1) {};
            \node (c) at (0,0) {};
            \node (cl) at (-1,0) {};
            \node (cr) at (1,0) {};
            \node (ul) at (-0.5,0.5) {};
            \node (d) at (0,-0.5) {};
            \node (dl) at (-1,-0.5) {};
            \node (dr) at (1,-0.5) {};
            \draw (1) --(cr) --(d) --(cl) --(1);
            \draw (c) --(dr) --(0) --(dl) --(c);
            \draw (cl) --(dl) (cr)--(dr) (0)--(d) (c)--(ul);
        }}
        \tikzset{b5/.pic ={
            \node (1) at (0,1) {};
            \node (0) at (0,-1) {};
            \node (c) at (0,0) {};
            \node (cl) at (-1,0) {};
            \node (cr) at (1,0) {};
            \node (ul) at (-0.5,0.5) {};
            \node (ur) at (0.5,0.5) {};
            \node (d) at (0,-0.5) {};
            \node (dl) at (-1,-0.5) {};
            \node (dr) at (1,-0.5) {};
            \draw (1) --(cr) --(d) --(cl) --(1);
            \draw (c) --(dr) --(0) --(dl) --(c);
            \draw (cl) --(dl) (cr)--(dr) (0)--(d) (ul)--(c)--(ur);
        }}
        \tikzset{diamond/.pic ={
            \node (l) at (-1,0) {};
            \node (r) at (1,0) {};
            \node (u) at (0,1) {};
            \node (d) at (0,-1) {};
            \draw (u)--(r)--(d)--(l)--(u);
        }}
        \tikzset{goo/.pic ={
            \node[draw=none,fill=none] at (-2,0) {$\geq\omega^\omega$};
            \path (-1,0) pic[scale=0.4]{131};
            \path (0,0) pic[scale=0.4]{oo1};
            \path (1,0) pic[scale=0.4]{oo2};
            \path (2,0) pic[scale=0.4]{oo3};
        }}
        \tikzset{go/.pic ={
            \node[draw=none,fill=none] at (-0.5,0) {$>\omega$};
            \path (0.5,0) pic[scale=0.4]{l7};
        }}
        \tikzset{g0/.pic ={
            \node[draw=none,fill=none] at (-5,2) {$>0$};
            \path (-4,2) pic[scale=0.4]{a0};
            \path (-3,2) pic[scale=0.4]{a1};
            \path (-2,2) pic[scale=0.4]{a2};
            \path (-1,2) pic[scale=0.4]{a3};
            \path ( 0,2) pic[scale=0.4]{diamond};
            \path ( 1,2) pic[scale=0.4]{a4};
            \path ( 2,2) pic[scale=0.4]{b0};
            \path ( 3,2) pic[scale=0.4]{b1};
            \path ( 4,2) pic[scale=0.4]{b2};
            \path ( 5,2) pic[scale=0.4]{b3};
            \path ( 6,2) pic[scale=0.4]{b4};
            \path ( 7,2) pic[scale=0.4]{b5};
        }}
        \path (0,1.5) pic[align=left]{goo};
        \draw [thin] (-2.3,0.75) --(2.5,0.75);
        \path (-1.5,0) pic[align=left]{go};
        \draw [thin] (-2.3,-0.75) --(10.5,-0.75);
        \path (3,-3.5) pic[align=left]{g0};
        \node[rectangle,draw=none,fill=none] at (6.5,-2.5) {$\underbrace{\hspace{8cm}}_{\mathrm{distributive}}$};
    \end{tikzpicture}
    \caption{All $\leq3$-direct lattices (Definition~\ref{def:direct}) are exhausted here, grouped by the fickleness levels characterized. We include also the diamond, which is the only $\leq2$-direct lattice, if we ignore trivial lattices such as the single point. Lattices including and to the right of the diamond are distributive.}
    \label{fig:3direct}
\end{figure}
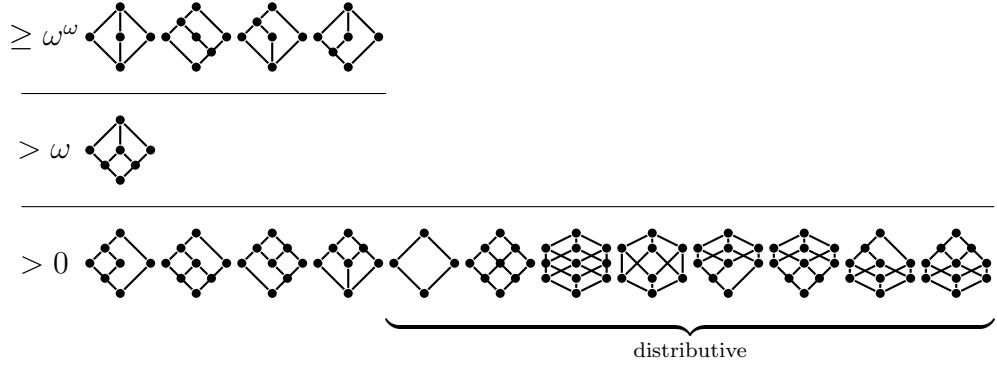

In Figure~\ref{fig:3direct}, we exhibit some $\leq3$-direct lattices, which turns out to exhaust all of them~\citep{ko2021thesis}. Eight of them are distributive and can therefore be embedded below any nonzero \re degree~\citep{thomason1971sublattices}. Excluding $L_7$ and $M_3$, we are left with seven non-distributive lattices to characterize in the next two sections.

\section{Some Lattices where $>1$ or $\geq\omega^\omega$-Fickleness is Sufficient} \label{sec:sufficient}
We can apply the techniques in \citep{downey2020hierarchy} to show:
\begin{Lemma} \label{lemma:3direct-oo}
    The four lattices at the top row of Figure~\ref{fig:3direct}, which includes $M_3$, can be bounded below any \re degree that contains $\geq\omega^\omega$-fickle sets.
\end{Lemma}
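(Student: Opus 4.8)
The plan is to reduce the statement to the case of $M_3$, which is one of the embedding theorems in \citep{downey2020hierarchy}, and then to run essentially the same construction for each of the three remaining lattices in the top row of Figure~\ref{fig:3direct}; fix one such lattice and call it $\mathcal{L}$. Since $\mathcal{L}$ is $3$-direct, fix pairwise-incomparable generators $\bm{a_0},\bm{a_1},\bm{a_2}$ as in Definition~\ref{def:direct}; one checks that $\mathcal{L}$ contains $M_3$ as a sublattice, so that $\omega^\omega$ is indeed the right target. Let $D\in\mathbf{d}$ be a $\geq\omega^\omega$-fickle set, equipped with a canonical $\mathcal{R}$-c.a.\ of order type $\geq\omega^\omega$, and let $W\in\mathbf{d}$ be an \re set. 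We build \re sets $A_0,A_1,A_2\leqt W$ representing $\bm{a_0},\bm{a_1},\bm{a_2}$; every join-element of $\mathcal{L}$ is represented by the corresponding Turing join of the $A_i$, and for every meet-element we construct an auxiliary \re set below the relevant $A_i$'s. Permitting is organised around the fickleness of $D$ exactly as in \citep{downey2020hierarchy}.

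The requirements are the standard ones for a lattice embedding. First, \emph{coding} requirements forcing the join-identifications of $\mathcal{L}$: whenever $\bm{a_i}\leq\bm{a_j}\cup\bm{a_k}$ holds in $\mathcal{L}$, code $A_i$ into $A_j\oplus A_k$. Second, \emph{meet} (minimal-pair) requirements for each meet-element, ensuring that any functional that is total and agrees when computed from the two relevant $A_i$'s is already computed by the auxiliary set lying below both. Third, \emph{diagonalization} requirements realizing each non-inclusion demanded by $\mathcal{L}$. These are placed on a priority tree and executed as in \citep{downey2020hierarchy}: the coding requirements force a single enumeration into one $A_i$ to trigger an enumeration into another, the meet requirements react by opening nested traces/gates, and the nesting depth is paid for by the mind-change budget that $D$'s c.a.\ supplies to the tree. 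It is shown in \citep{downey2020hierarchy} that for $M_3$ this budget is exactly $\omega^\omega$, witnessed by an assignment of ordinals $<\omega^\omega$ to the nodes of the tree.

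The key step, which I expect to be the main obstacle, is to verify that the requirements $\mathcal{L}$ carries over and above the $M_3$-core --- a minimal-pair requirement for each meet-element that is not the common bottom, and a non-inclusion requirement for each join-element that is strictly below the top --- can be met alongside the core within the same $\omega^\omega$ budget. A priori these extra requirements, in their interaction with the codings and with one another, could conflict (as they do for $S_8$ and $L_{20}$), so this is where the real content lies. The claim, to be established by a short case analysis over the three lattices, is that the priority tree for $\mathcal{L}$ still admits an ordinal labelling below $\omega^\omega$: each new minimal-pair requirement lies ``between'' fewer of the $A_i$ than the bottom's does, so its traces are no deeper than ones already present; each new non-inclusion $A_i\not\leqt A_j\oplus A_k$ is a single positive requirement on $A_i$ that clashes with no coding of $A_i$, which is directed into a different pair of sets; and no cycle of obligations among these forces nesting beyond $\omega^\omega$. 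Granting this, the labelling below $\omega^\omega$ exists for $\mathcal{L}$, the fickleness of $D$ realizes it via the framework of \citep{downey2020hierarchy}, and everything else --- meeting each requirement with a single $D$-permission, the finite-injury bookkeeping along the true path, and checking that the coding and minimal-pair reductions are total and correct --- is routine and identical to \citep{downey2020hierarchy}.
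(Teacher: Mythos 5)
Your overall route --- take the Downey--Greenberg $M_3$ construction as the template and rerun it for each of the other three top-row lattices, with the real work being the check that the additional join/meet/diagonalization requirements still admit a trace-and-gate analysis whose ordinal cost stays below $\omega^\omega$ --- is exactly the approach the paper takes. In fact the paper gives no more detail than ``we can apply the techniques in \citep{downey2020hierarchy},'' deferring the verification elsewhere, so your sketch is no less complete than the source on this point.

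One concrete assertion in your write-up is false, though: it is not the case that each of the three remaining lattices contains $M_3$ as a sublattice. Section~\ref{sec:necessary} states that only two of the four top-row lattices satisfy the hypotheses of Theorem~\ref{thm:oo-original} (the $M_3$-style conditions with all three joins present), and this is precisely why the author must prove the strengthened Theorem~\ref{thm:oo} with one join condition deleted; if all four contained $M_3$ as a sublattice, the necessity direction would follow immediately from the $M_3$ characterization and that strengthening would be pointless. The error is harmless for the lemma actually being proved, since you invoke the sublattice claim only to argue that $\omega^\omega$ is ``the right target,'' i.e.\ for optimality, which is not part of this statement. For the sufficiency direction the weaker join skeleton in fact makes life easier, not harder: as the trace analysis around Figure~\ref{fig:traces-layer} illustrates, removing a join causes traces to stop extending at the corresponding gates, shortening the total trace length (to $<\omega^{\lceil|\rho|/2\rceil}$ rather than $<\omega^{|\rho|}$ in the two-gate alternation), so the $\omega^\omega$ permission budget certainly still suffices. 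Just be aware that the mental model ``$M_3$ plus a few extra requirements'' is structurally inaccurate for two of the three lattices: what differs is the join structure itself, not merely the inventory of meets and non-inclusions, and your deferred ``short case analysis'' must be carried out against the correct requirement lists.
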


Known methods can also show:
\begin{Lemma} \label{lemma:3direct-2}
    The first three lattices in the bottom row of Figure~\ref{fig:3direct} can be bounded below any non-zero \re degree.
\end{Lemma}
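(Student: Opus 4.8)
The plan is to embed each of these three lattices below an arbitrary nonzero \re degree by a single priority construction with permitting, generalizing the known embedding of the diamond and of $N_5$ below every nonzero \re degree. So fix a nonzero \re degree $\bm{d}$, a noncomputable \re set $D\in\bm{d}$, and a computable enumeration of $D$, and adopt the rule that a number enters any set we build only when $D$ simultaneously enumerates a strictly smaller number; with the usual marker bookkeeping this keeps every set we build recursive in $D$, so the embedding lands below $\bm{d}$. Let $L$ be one of the three lattices. From Figure~\ref{fig:3direct} one reads off that $L$ is $3$-direct and non-distributive but contains no copy of $M_3$, so by Theorem~\ref{thm:distributive} its only Birkhoff obstructions are copies of $N_5$. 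I would list the join-irreducible elements $\bm{b_0},\dots,\bm{b_{k-1}}$ of $L$, build \re sets $B_0,\dots,B_{k-1}$, and send each $\bm{x}\in L$ to the Turing degree of $\bigoplus\{B_j:\bm{b_j}\leq\bm{x}\}$. Preservation of order and of joins is automatic along relations that do not cross an $N_5$; for a relation $\bm{b}\leq\bm{a_0}\cup\bm{a_1}$ witnessing the failure of join-primeness, the needed reduction is instead \emph{forced} during the construction by mirroring: whenever a number enters a block lying below $\bm{b}$, a coded copy is simultaneously enumerated into a fixed block lying below $\bm{a_0}$ or below $\bm{a_1}$.

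The requirements are the usual ones for a lattice embedding. For each non-relation $\bm{x}\not\leq\bm{y}$ and each index $e$, a diagonalization forces the set coding $\bm{x}$ to differ from $\Phi_e$ applied to the set coding $\bm{y}$: appoint a fresh follower, wait for the relevant computation to converge to $0$ on it, then act on the next $D$-permission below its use, enumerating the follower together with its mirrors; the standard permitting argument shows that some follower is eventually permitted, so the requirement is met. For each meet relation $\bm{z}=\bm{x}\cap\bm{y}$, a minimal-pair ``gate'' preserves agreements between the sets coding $\bm{x}$ and $\bm{y}$ below the set coding $\bm{z}$, so that meets are preserved. These are laid out on a priority list with the familiar finitely-many-injuries-per-requirement bookkeeping: gates impose restraints, diagonalizations respect higher-priority restraints and act at most once each, and an injured gate reopens.

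The crux — and the only place where the shape of $L$ is used — is that the diagonalization and the minimal-pair requirements can be met \emph{together} with only one $D$-permission per diagonalization. The danger is precisely an $N_5$-configuration $\bm{b}\leq\bm{a_0}\cup\bm{a_1}$ with $\bm{b}\not\leq\bm{a_0}$ and $\bm{b}\not\leq\bm{a_1}$: diagonalizing for $\bm{b}$ enumerates into a block below $\bm{b}$, the join requirement then forces a mirrored enumeration into a block below $\bm{a_0}$ or below $\bm{a_1}$, and if that block lies under one half of a minimal pair whose other half lies under $\bm{b}$, the mirror destroys the gate. I expect the real work to be a finite inspection of the three Hasse diagrams showing that this never happens: in each of these lattices every copy of $N_5$ is ``one-sided'' — one has $\bm{a_0}\cap\bm{a_1}\leq\bm{b}$ with $\bm{b}$ comparable to one of $\bm{a_0},\bm{a_1}$ — so the mirror can always be routed into a direction not constrained by any meet requirement against $\bm{b}$; equivalently, none of the three lattices contains a critical triple. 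Granting this structural fact, single permitting suffices, the verification that the resulting map preserves $\leq$, $\cup$ and $\cap$ is routine, and $L$ is embedded below $\bm{d}$.
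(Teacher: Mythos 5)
Your overall architecture---followers for the diagonalizations, minimal-pair gates for the meets, mirrored enumerations for the joins, all run with permitting below a noncomputable \re set---is the right shape, and it is also the shape underlying the paper's argument. The paper, however, proves this lemma essentially by citation: it checks that these three lattices satisfy the Trace--Probe Property (TPP) of \citep{lerman1984elementary} and observes that the TPP embedding technique is compatible with nonzero permitting. Your proposal replaces that check with a different structural criterion, and that is where the gap lies.

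The gap is the crux step ``granting this structural fact, single permitting suffices.'' First, one permission per diagonalization cannot suffice for any non-distributive lattice: the join $\bm{b}\leq\bm{a_0}\cup\bm{a_1}$ forces the follower and its mirror to be enumerated, and a gate whose two sides separate $\bm{b}$ from the mirror's target forces those enumerations to occur at \emph{different} stages (the trace must pass the gate, wait for recovery, and possibly be re-targeted), so each costs its own permission. The real question is whether the total number of temporally separated enumerations generated by a follower---the length of its trace after all re-targetings---is bounded by a constant (or at worst by the priority index, as for the fourth lattice in Lemma~\ref{lemma:3direct-n}); only then does $n$-fold permitting below an arbitrary nonzero \re degree apply. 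Second, the criterion you propose to verify by inspecting the Hasse diagrams---every $N_5$ is ``one-sided,'' equivalently no critical triple---is necessary for the conclusion but is not a known sufficient condition for embeddability below every nonzero degree, and it does not by itself control the re-targeting dynamics just described; the condition that does is TPP, which is a statement about how traces can be partitioned and routed past gates, not merely about which sublattices occur. (That no simple sublattice criterion can do the work is illustrated by the fourth bottom-row lattice, which fails TPP yet still embeds below every nonzero degree by a finer trace analysis.) To close the gap you would need either to verify TPP for the three lattices and invoke the machinery of \citep{lerman1984elementary} together with nonzero permitting, as the paper does, or to carry out the trace-extension and partitioning analysis of Section~\ref{sec:sufficient} for each lattice and show the trace lengths are uniformly bounded.
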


Lemma~\ref{lemma:3direct-2} is an application of the methods in \citep{lerman1984elementary}. The authors introduced a lattice-theoretic property known as the \emph{Trace-Probe Property} (TPP), which is sufficient for a lattice to be embeddable in $\Rt$. It is not hard to show that their TPP embedding technique is compatible with nonzero permitting. The three lattices satisfy TPP, and can therefore be bounded below any nonzero \re degree. The fourth lattice in the bottom row, enlarged in Figure~\ref{fig:3direct-n}, does not satisfy TPP. But we can still prove:

\begin{Lemma} \label{lemma:3direct-n}
    The lattice in Figure~\ref{fig:3direct-n} can be bounded below any non-zero \re degree.
\end{Lemma}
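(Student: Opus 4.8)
The plan is to run a priority construction on a tree of strategies (of the usual infinite‑injury, $\bm{0}''$‑type used for lattice embeddings), relative to a fixed \re set $D$ of the given non‑zero degree. Since $\mathcal{N}$ (the lattice of Figure~\ref{fig:3direct-n}) is $3$‑direct, I would build three \re sets $A_0,A_1,B$ for its three generators, realize each meet element by a further \re set coded beneath the appropriate $A_i$'s and $B$, and realize each join element by the corresponding effective join. By Birkhoff's Theorem~\ref{thm:distributive} and Definition~\ref{def:join}, $B$'s degree $\bm b$ is the join‑irreducible‑but‑not‑join‑prime element, with $\bm b\leq\bm{a_0}\cup\bm{a_1}$ but $\bm b\nleq\bm{a_0}$ and $\bm b\nleq\bm{a_1}$. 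Every enumeration into a built set is required to be $D$‑permitted, so each built set — hence each lattice image — is $\leqt D$. The requirements fall into four families: coding/join requirements (met passively by the coding markers), meet (minimal‑pair) requirements ensuring each $\Phi^{A_i}=\Psi^{A_j}$ and each $\Phi^{A_i}=\Psi^{B}$, if total, is recursive, non‑inclusion requirements $\Phi^{A_j}\neq A_i$, $\Phi^{A_j}\neq B$, $\Phi^{B}\neq A_i$ for incomparable pairs, and permitting.

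First I would dispose of the \emph{distributive skeleton}. Because $\mathcal{N}$ contains neither $M_3$ nor any of the other three lattices in the top row of Figure~\ref{fig:3direct}, its non‑distributivity is localized: $\bm b$ is essentially the only join‑irreducible that is not join‑prime, and $\mathcal{N}$ with that one obstruction removed is a distributive embedding problem. The coding, meet and non‑inclusion requirements not involving $\bm b$ can therefore be met by the classical embedding of distributive lattices below any non‑zero \re degree~\citep{thomason1971sublattices,lachlan1972embedding} (whose meet requirements are handled by the usual restraint/``gate'' strategies), which, as the introduction notes, ``involves no joins'' and so is compatible with non‑zero permitting — a single $D$‑permission per attack. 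This is the method behind Lemma~\ref{lemma:3direct-2}; the reason $\mathcal{N}$ needs a separate argument is that, failing the Trace–Probe Property, its one non‑distributive witness is not handled by that framework, and the next step supplies the missing idea.

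The heart of the proof treats $\bm b$. When the diagonalizer for $\Phi^{A_0}\neq B$ wants to put a follower $x$ into $B$, the join requirement $\bm b\leq\bm{a_0}\cup\bm{a_1}$ forces a matching enumeration into $A_0$ or $A_1$; I would always route it into $A_1$ (and route the symmetric $A_1$‑side diagonalizer through $A_0$). The key point is that a \emph{single} $D$‑permission still suffices: the enumeration into $A_1$ is under the construction's control, so on seeing a $D$‑permission for $x$ we enumerate $x$ into $B$ and a companion into $A_1$ at the same stage, keeping $B\leqt D$ and $A_1\leqt D$. The cost is that these auxiliary enumerations into $A_1$ can injure meet computations $\Phi^{A_1}=\Psi^{A_j}$ (or $\Phi^{A_1}=\Psi^{B}$); this is absorbed just as in a minimal‑pair argument — the meet strategies lie above the $\bm b$‑diagonalizers on the tree and impose restraint, and each is injured only finitely often by each diagonalizer, since a diagonalizer wins (or is permanently cancelled) after finitely many attacks.

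The main obstacle is the global consistency check: that all four requirement families coexist on one tree without deadlock — the phenomenon that makes $S_8$ and $L_{20}$ non‑embeddable~\citep{lachlan1980not,lempp1997finite}. I would argue it cannot occur here because the digraph of ``forced'' enumerations is acyclic: an enumeration is forced by another only through a join requirement lying below a critical (join‑irreducible, non‑join‑prime) element, $\mathcal{N}$ has only the one such element $\bm b$, and the auxiliary enumeration $\bm b$ triggers — into the \emph{generator} set $A_1$ — forces nothing further, since enumerating into a generator merely extends the coding markers of the elements above it and never starts a new diagonalization, whereas an $M_3$ or top‑row sublattice would force a cascading third enumeration. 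With acyclicity, the only real conflict left is the standard meet‑versus‑diagonalization one, resolved by the priority order. A routine infinite‑injury verification then finishes: each strategy on the true path acts finitely often and so requests only finitely many $D$‑permissions, and since $\deg(D)\neq\bm 0$, $D$ permits all but finitely many candidate followers, so every diagonalization eventually succeeds; the coding markers then give the required $\leqt$‑relations and the meet strategies the required meets, so $\mathcal{N}$ is bounded below $\deg(D)$.
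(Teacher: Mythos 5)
The central step of your argument --- that a single $D$-permission suffices because ``the enumeration into $A_1$ is under the construction's control, so on seeing a $D$-permission for $x$ we enumerate $x$ into $B$ and a companion into $A_1$ at the same stage'' --- is exactly what the meet requirements of this lattice forbid, and that tension is the whole content of the lemma. In the paper's labelling the join is $B\leq ACD$ and the meets include $BCD\cap AD\leq D$ and $ABD\cap CD\leq D$. A minimal-pair gate for, say, $BCD\cap AD\leq D$ cannot tolerate its $BCD$-side and its $AD$-side being injured at the same stage unless an element below its use $\theta(m)$ enters the computing set $D$; but $D$ sits inside the oracle $AD$ of the diagonalization $B\nleq AD$, so if $\theta(m)$ was chosen before the follower $b$ was realized, lifting it would un-realize $b$. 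Hence the $B$-follower and its join-companion (targeted for $A$ or $C$) must be enumerated at \emph{different} stages, each costing its own permission from the permitting set; and while waiting for the injured side of the stopping gate to recover, the join forces the companion's use to be re-chosen and re-targeted (from $C$ to $A$ or back), with each alternating higher-priority $BA$-/$BC$-gate forcing one more such round. The paper's proof consists precisely of this trace/re-targeting machinery together with the observation that the number of rounds is bounded by $|\rho|$ --- finite for each requirement but not uniformly in $\rho$, which is exactly why this lattice fails the Trace-Probe Property and is not covered by Lemma~\ref{lemma:3direct-2}. A nonzero \re degree can still grant any fixed finite number of permissions per requirement, which is how the lemma is rescued; your one-permission claim skips this entirely.

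Relatedly, your appeal to absorbing the companion enumerations ``just as in a minimal-pair argument,'' and your acyclicity argument, do not engage with the actual obstruction. The problem is not a cycle of forced enumerations but the three-way conflict between (i) the join forcing a companion enumeration into $A$ or $C$ alongside every $B$-enumeration, (ii) the gates forbidding those two enumerations from being simultaneous, and (iii) the realization restraint forbidding the gates' uses from being lifted into $D$. As written, your construction would either injure both sides of a gate at once (breaking the meet requirement) or stall at the first $BA$- or $BC$-gate; repairing it requires the sequential, multiply-permitted trace enumeration that is the substance of the paper's proof.
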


In the rest of this section, we shall prove the lemma by modifying the TPP techniques, and applying nonzero permitting. The fickleness requested by a lattice comes from its positive requirements $\rho$. If $\rho$ requests for $n$ many permissions, then any set that can grant $\geq n$ permissions can satisfy $\rho$. In TPP lattices, $n\in\omega$ is a constant, while in the lattice of Lemma~\ref{lemma:3direct-n}, $n$ is bounded by $|\rho|\in\omega$. Since nonzero \re sets can grant $n$ many permissions for any fixed $n$, both types of lattices can be bounded below any nonzero degree.

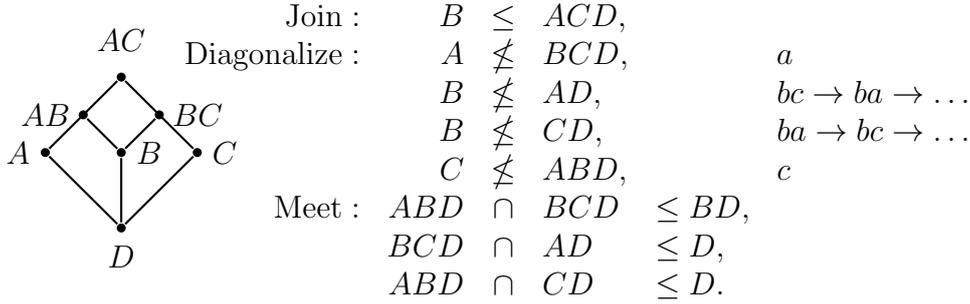
\begin{figure}[tpb]
    \centering
    \tikzset{every picture/.style={thick}}
    \begin{tikzpicture}[every node/.style={circle,fill=black,inner sep=1.2pt}]
        \tikzset{pics/a3/.style n args={2}{code ={
            \node[label=below:{$D$}] (d) at (0,-1) {};
            \node[label=left:{$A$}] (l) at (-1,0) {};
            \node[label={[label distance=0em]above:{$AC$}}] (u) at (0,1) {};
            \node[label=right:{$C$}] (r) at (1,0) {};
            \node[label=right:{$B$}] (c) at (0,0) {};
            \node[label=left:{$AB$}] (ul) at (-0.5,0.5) {};
            \node[label=right:{$BC$}] (ur) at (0.5,0.5) {};
            \draw (c)--(d)--(l)--(u)--(r)--(d) (ul)--(c)--(ur);
            }}, pics/a3/.default={x}{y}
        }
        \tikzset{pics/eq3/.style n args={2}{code ={
            \node[rectangle,draw=none,fill=none] at (0,0) {$\begin{array}{rrclll}
                \mathrm{Join: } &B &\leq &ACD,\\
                \mathrm{Diagonalize: } &A &\nleq &BCD, &&a\\
                &B &\nleq &AD, &&bc\to ba \to \ldots\\
                &B &\nleq &CD, &&ba\to bc \to \ldots\\
                &C &\nleq &ABD, &&c\\
                \mathrm{Meet: } &ABD &\cap &BCD &\leq BD,\\
                &BCD &\cap &AD &\leq D,\\
                &ABD &\cap &CD &\leq D.
            \end{array}$};
            }}, pics/eq3/.default={x}{y}
        }
        \path (0,0) pic{a3};
        \path (6,0) pic{eq3};
    \end{tikzpicture}
    \caption{Here is a lattice that can be bounded below any non-zero \re degree. $\rho:B\nleq AD$ generates traces of length two that retarget between $BC$ and $BA$-traces, where the number of times we re-target is bounded by $|\rho|$. In particular, by the notation of \citeauthor{lerman1984elementary} (\citeyear{lerman1984elementary}), this lattice is not a TPP lattice because the bound is not independent from $\rho$.}
    \label{fig:3direct-n}
\end{figure}

To see how $|\rho|$ comes about, we need to observe the interactions between $\rho$ and other requirements, such as with negative requirements $\eta$, and with joins $J$. We borrow notations from \cite{downey2007totally,downey2020hierarchy,lempp2012priority,ambos2019universally,lempp2006embedding}. In these constructions, $\rho$ generates sequences of elements that need to be enumerated. The sequences are referred to as \emph{traces}, and each element in a trace is often referred to as a \emph{ball}. The process of enumerating can be thought of as balls falling to the bottom of a pinball machine, into buckets that represent each ball's targeted set. Often, longer traces are \emph{partitioned} into shorter ones, so that when the balls in a trace are enumerated simultaneously, $\eta$ will not be injured in an unforeseeable manner. A trace can be as short as a single ball in length. Each trace enumeration must be permitted by the set bounding the lattice. A permission is granted when the set changes its mind, or in other words, when the set loses one fickleness. Henceforth, we shall use the terms ``permissions'' and ``fickleness'' interchangeably. Therefore, the length of the traces generated by $\rho$ bounds the fickleness sufficient for embedding $L$. As we shall see, these lengths do not exceed $|\rho|$.

Let $X$ be an \re set of nonzero degree. We want to construct \re sets $A,B,C,D\leq X$ satisfying the requirements in Figure~\ref{fig:3direct-n}. To derive these requirements, we use $AB$ to abbreviate $A$ join $B$. We also let the \re set that represents a point on the lattice be the join of its label, with the sets of labels below it. For instance, the point labelled $A$ will be represented by $AD$, the join of sets $A$ and $D$, while the point labelled $AC$ will be represented by $ABCD$. This rule preserves the partial orders of the lattice, allowing us to omit requirements such as $D\leq A$ for instance, since $D\leq AD$. Because of this rule, to express ``$B\leq AC$'', which says that the join of the points $A$ and $C$ computes $B$, we use the requirement $B\leq ACD$. Also, note that we omitted requirements that can be derived from those listed in the figure. For instance, $BD\nleq AD$ holds if and only if $B\nleq AD$ holds, so we omit the longer version.

Fix a computable ordering of the requirements. We shall use a $\Pi_2$ tree framework in our construction. First, consider the global requirements $A,B,C,D\leq X$, where we construct functionals $\Psi_A,\Psi_B,\Psi_C,\Psi_D$ to satisfy:
\begin{align*}
    G_A: &\;A = \Psi_A(X),\\
    G_B: &\;B = \Psi_B(X),\\
    G_C: &\;C = \Psi_C(X),\\
    G_D: &\;D = \Psi_D(X).
\end{align*}
We describe the strategy for $G_A$ only; the remaining $G$-strategies are similar.

\begin{framed}
    \noindent\textbf{$G_A$-strategy}: \emph{Initialize} $G_A$ by setting $\Psi=\Psi_A=\emptyset$. Start with $a=0$.
    \begin{enumerate}
        \item \emph{Set up}: At stage $a$, some positive requirement $\rho$ might pick $a$ as a follower and assign a \emph{code} $i$ to $a$. Set use $\psi(a)=i$. At the next stage start a new cycle with $a+1$.
        
        \item \emph{Maintain}: Finitely often, $\rho$ might want to lift the use $\psi(a)$ or even enumerate $a$ into $A$. \emph{Permission is granted by $X$} to take those actions at the stage when $X\restriction \psi(a)$ changes. With permission, $\rho$ may also change the code of $a$ to $i'$, and we update the use to $\psi(a)=i'$.
        
        Note that it is possible for many elements to be assigned the same code. These elements wait for the same permission from $X$ to be enumerated, and if they come from the same follower of a positive requirement, they will be enumerated together.
    \end{enumerate}
\end{framed}

Next, consider the join requirement, which is also global, where we construct functional $K=K_B$ such that
\begin{align*}
    J_B: B &=K_B(ACD).
\end{align*}

Since this lattice has no other join requirements, we drop the subscript $B$.
\begin{framed}
    \noindent\textbf{$J$-strategy}: \emph{Initialize} $J$ by setting $K=\emptyset$. Start with $n=0$.
    \begin{enumerate}
        \item \emph{Set up}: At some stage $\geq n$, pick a large value as use $\kappa(n)$ targeted for $A$, $C$, or $D$, and set $K(n)=0$ with that use. The choice of the target will depend on the positive requirement $\rho$, if any, with which $n$ is associated. Start a new cycle with $n+1$. Note that $n$ is targeted for $B$, and as mentioned in the $G$-strategies, $n$ will be assigned by other requirements a code $i$. Assign the same code to $\kappa(n)$.
        
        \item \emph{Maintain}: We may \emph{lift the use at $n$} by enumerating $\kappa(n)$ into $A$, $C$, or $D$, then choosing a new large $\kappa(n)$. With the new use, we declare $K(n)=0$ if $n$ has been enumerated into $B$, otherwise we declare $K(n)=1$. The new use will be targeted for one of $A$, $C$, or $D$ again. Depending on $\rho$, the choice of the target may change, and we refer to this change as \emph{re-targeting}. Whenever we lift the use, the requirement that wanted the lifting may assign to the new use a new code. We will need to ensure that $\rho$ does not lift $\kappa(n)$ infinitely often.
    \end{enumerate}
\end{framed}

Consider the meet requirements $\eta$, where we construct $\Theta$ such that:
\begin{align*}
    \eta_{\Phi W}: &\Phi_0(ABD) =\Phi_1(BCD) =W \implies (\exists \Theta) [W=\Theta(BD)],\\
    \eta_{BA\Phi W}: &\Phi_0(BCD) =\Phi_1(AD) =W \implies (\exists \Theta) [W=\Theta(D)],\\
    \eta_{BC\Phi W}: &\Phi_0(ABD) =\Phi_1(CD) =W \implies (\exists \Theta) [W=\Theta(D)].
\end{align*}

We drop subscripts if context is clear. Consider $\eta_{BA}$, also referred to as \emph{$BA$-gates}, because $B$ and $A$ appear on opposite sides of the gate, which as we shall see, generally forbids $B$- and $A$-balls from being enumerated at the same time. Similarly, $\eta_{BC}$ is often referred to as a $BC$-gate. Maintaining a computation $W(m)=\Theta(m)$ at subrequirement $\eta_{AB},m$ involves protecting the computation at the $BCD$-side $\Phi_0(BCD,m)$, or the $AD$-side $\Phi_1(AD,m)$. In the $BA$-gate, if the hypothesis $\Phi_0=\Phi_1=W$ holds, we want to ensure that $W\leq D$, and hence refer to $D$ as a \emph{computing set(s)} of the gate. Similarly, the computing sets of the two other types of gates are $BD$ and $D$.

A gate's strategy is based on the minimal pair strategy of \citep{yates1966minimal}, where whenever both sides of a computation are simultaneously injured, say at subrequirement $m$, an element $\leq\theta(m)$ must enter a computing set, which we refer to as the use $\theta(m)$ being \emph{lifted}:
\begin{framed}
    \noindent \textbf{$\eta$-strategy}: \emph{Initialize} $\eta$ by setting $\Theta=\emptyset$ and $m=0$ and \emph{use} $\theta(m')$ to be undefined for all $m'\in\omega$.
    \begin{enumerate}
        \item \emph{Set up}: Wait for \emph{equality} 
        \[\Phi_0\restriction (m+1) =\Phi_1\restriction (m+1) =W\restriction (m+1)\]
        at $m$ for the first time, say at stage $s_0$. Start a new phase at set up for the $(m+1)$-th subrequirement.
        
        \item \emph{Maintain}: When equality is first observed or has returned and $\theta(m)$ does not exist, assign a new large value for $\theta(m)$. Whenever $\Phi_0(m)$ or $\Phi_1(m)$ is injured by a follower that existed at stage $s_0$, and an element $\leq\theta(m)$ enters a computing set at the same time, we lift the use by declaring $\theta(m)$ to be undefined. Then we wait for the next $\eta$-stage before returning to the beginning of this maintenance phase to pick a new large use.
    \end{enumerate}
\end{framed}
        
To verify that $\eta$ is satisfied, we would like to show that the only elements that can injure $\Phi_0(m)$ or $\Phi_1(m)$ are those that existed at stage $s_0$. Also, $\Phi_0(m)$ and $\Phi_1(m)$ cannot be injured together unless $\theta(m)$ is lifted. Note that if equality at $m$ never returns, $\eta$ would be satisfied vacuously. If the hypothesis of the gate holds, we will need to show that the use is not lifted infinitely often. We shall provide such a verification in Lemma~\ref{lemma:eta} for a more complex construction.

Now, consider the diagonalization or positive requirements, say $B\not\leq AD$. Given functional $\Xi$, we satisfy
\[\rho_{\Xi}: B\neq \Xi(AD).\]

\begin{figure}[tpb]
    \centering
    \tikzset{every picture/.style={thick}}
    \begin{tikzpicture}[every node/.style={rectangle,draw=none,fill=none}]
        \tikzset{pics/rho/.style n args={3}{code ={
            \node at (-2,0) {$#1$};
            \node[text width=10em,align=left] at (2.05+#2,0.2) {$#3$};
            }}, pics/rho/.default={\eta}{0}{abab}
        }
        \tikzset{pics/gate/.style n args={3}{code ={
            \node at (-2,0) {$#1$};
            \node[text width=10em,align=left] at (2.05+#2,0.2) {$#3$};
            \draw (0,0)--(5,0);
            }}, pics/gate/.default={\eta}{0}{abab}
        }
        \tikzset{pics/down/.style n args={3}{code ={
            \draw[->] (0.4+#1,#2-0.1)--(0.4+#1,#2-0.6-#3);
            }}, pics/down/.default={0}{2}{0}
        }
        \tikzset{traces/.pic ={
            \path (0,1) pic{rho={\rho:B\neq \Xi(AD)}{0}{bc_0}};
            \path (0,0) pic{down={0.0}{1}{2}};
            \path (0,0) pic{down={-0.3}{1}{2}};
            \path (0, 0) pic{gate={BA\mathrm{-gate}}{0}{ }};
            \path (0,-1) pic{gate={BA\mathrm{-gate}}{0}{ }};
            \path (0,-2) pic{gate={BC\mathrm{-gate}}{0}{b\cancel{c_0}}};
            \path (0,0) pic{down={0.0}{-2}{0}};
            \path (0,-3) pic{gate={BA\mathrm{-gate}}{0}{b\cancel{a_0}}};
            \path (0,0) pic{down={0.0}{-3}{0}};
            \path (0,-4) pic{gate={BC\mathrm{-gate}}{0}{b\cancel{c_1}}};
            \path (0,0) pic{down={0.0}{-4}{0}};
            \path (0,-5) pic{gate={BA\mathrm{-gate}}{0}{b\cancel{a_1}}};
            \path (0,0) pic{down={0.0}{-5}{0}};
            \path (0,-6) pic{rho={}{0}{b\cancel{c_2}}};
        }}
        \tikzset{pics/eq/.style n args={2}{code ={
            \node[rectangle,draw=none,fill=none] at (0,0) {$\begin{array}{rclll}
                B &\leq &ACD
            \end{array}$};
            }}, pics/eq/.default={x}{y}
        }
        \path (0,0) pic{traces};
        \path (8,1) pic{eq};
    \end{tikzpicture}
    \caption{Traces generated by $\rho$, after interacting with the join on the right, and with gates depicted by a horizontal line. Gates of higher priority are drawn lower down. Whenever the trace meets consecutive $BA$- and $BC$-gates, it is re-targeted between $BC$- and $BA$-traces. Therefore, the total fickleness requested does not exceed $|\rho|$. In this toy example, the length of a stopped trace does not exceed 2. In more complex constructions, traces can be arbitrarily long, like those in Figure~\ref{fig:traces-layer}.}
    \label{fig:traces-3direct-n}
\end{figure}

The strategy is built on the usual Friedberg Muchnik approach, where we pick a follower $b$, wait for $b$ to be \emph{realized} when $\Xi(b)=0$, set a restraint of $\xi(b)$ on $AD$ to prevent un-realization, then wait for permission from $X$ to enumerate $b$ into $B$. But the joins and gates complicate this enumeration. Consider how $\rho=\rho_{\Xi}$ interacts with $J$ and with higher priority $BA$-gates and $BC$-gates. Refer to Figure~\ref{fig:traces-3direct-n} for an illustration. Assume $\rho$ has a follower $b$ that is realized at stage $s_0$. We want to enumerate $b$ into $B$ at the same time prevent elements $\leq\xi(b)$ from entering $AD$. But $J$ says that before we can enumerate $b$, we need to enumerate $\kappa(b)$ into targets $A$, $C$, or $D$. If $b$ was realized after $J$ chose $\kappa(b)$, then $\xi(b)>\kappa(b)$ and we cannot choose $A$ or $D$ as targets otherwise we will un-realize $b$. So we can only target $C$. So we want to enumerate $c_0=\kappa(b)$ into $C$, then $b$ into $B$. We think of these elements $c_0$ and $b$ as \emph{balls} falling past gates of a pinball machine, to be collected by their target sets $C$ and $B$ respectively. The enumeration of the $B$-ball demanded that a $C$-ball be enumerated first, and we say that the $B$-ball generates an \emph{$BC$-trace}.

One might be tempted to enumerate the $BC$-trace simultaneously. But $BC$-gates prevent us from doing so, unless the gate's use $d$ targeted for $D$ is lifted at the same time. But if $d$ was picked before $b$ got realized, then $d<\xi(b)$ cannot be lifted without un-realizing $b$. Thus the $BC$-gate forces us to enumerate the balls in the $BC$-trace separately and in reverse, starting from the $C$-ball, say at stage $s_1\geq s_0$. We say that the trace is \emph{stopped} by the $BC$-gate from simultaneous enumeration. After enumerating $c_0$, we have to wait for the $C$-side computation of stopping gate to recover at some stage $s_2>s_1$, before we can enumerate the $B$-ball. When recovery has occurred, we say that the $BC$-gate has \emph{reopened}. While waiting for recovery, $J$ requires that we pick a new large use to be re-targeted to $A$, $C$, or $D$. The new target cannot be on the $C$-side of the $BC$-gate; otherwise, when the new use is enumerated later, we will injure the $C$-side again. Therefore we cannot re-target to $C$ or $D$, and are forced to choose $A$. Thus we re-target $bc_0$ to a $BA$-trace $ba_0$. In other words, once $c_0$ is enumerated, we need to pick an $A$-ball $a_0$ and wait for it to be enumerated into $A$, before we are allowed to enumerate $b$ into $B$. After the $C$-enumeration, the trace is no longer stopped by the $BC$-gate, and we say that it has \emph{passed} the gate.

After the $C$-side of the $BC$-gate recovers, one might again be tempted to enumerate the $BA$-trace $ba_0$ simultaneously when permitted. But higher priority $BA$-gates disallows $B$- and $A$-balls from being enumerated together. We say that the trace is \emph{stopped} at this $BA$-gate. Repeating the argument in the above paragraph but with the role of $A$ and $C$ swapped, we will want to enumerate $a_0$ first, then retarget the $BA$-trace to a $BC$-trace $bc_1$. Again if there is a higher priority $BC$-gate, we will need to retarget to a $BA$-trace $ba_1$, and so on.

Suppose $\rho$ has $k$ many higher priority gates
\[\eta_1 <\ldots <\eta_k <\rho,\]
that alternate between $BA$- and $BC$-gates. Refer to Figure~\ref{fig:traces-3direct-n} for an illustration, where higher priority gates are drawn lower down. The trace of $\rho$ first descends to the lowest priority gate $\eta_k$ and is stopped by that gate. After retargeting, enumerating, and waiting for the gate to reopen, the trace descends to the next lower priority gate $\eta_{k-1}$, where it again waits to pass. This process is repeated $k$ many times until the trace passes gate $\eta_0$, and the elements of the trace is enumerated simultaneously. Each enumeration requests one permission from $X$. Given that $k<|\rho|$, $|\rho|$ many fickleness will be sufficient to satisfy $\rho$.

The last type of gate $\eta$, which asserts that $ABD\cap BCD \leq BD$, will not stop $BC$ or $BA$-traces, because $\eta$'s computing sets includes $B$, and $B$-elements are picked only as followers $b$ and not as trace extensions, meaning that the picking occurs at $\eta$-expansionary stages. At these stages, if a subrequirement $\eta,m$ has already been set up, then the $b$ will be too large to inflict injury; and if the subrequirement has yet to be set up, then $\theta(m)$ can always be lifted above $b$, allowing for simultaneous injuries from $b$ on $\eta,m$. Details can be found in \citep{ko2021thesis}. Using the same framework, we can show that the other $\rho$ requirements generate traces no longer than $|\rho|$, which allows any nonzero \re degree to satisfy all requirements, and hence bound the lattice.

\section{Some Lattices where $\geq\omega^\omega$-Fickleness is Necessary} \label{sec:necessary}
In \citeyear{downey2020hierarchy}, when Downey and Greenberg showed that the $M_3$ needed $\geq\omega^\omega$-fickleness, they proved:
\begin{Theorem}[\cite{downey2020hierarchy}] \label{thm:oo-original}
    Let $L$ be a structure with 3 elements $A,B,C$ satisfying
    \begin{align*}
        \begin{matrix}
            A\leq BC,\\
            B\leq AC,\\
            C\leq AB,\\
            (\forall W \leq A, B)\; [W \leq C],\\
            (\forall W \leq A, C)\; [W \leq B],\\
            A\not\leq C\;\; (\mathrm{or } B\not\leq C \mathrm{ or } A\not\leq B).
        \end{matrix}
    \end{align*}
    If $\bm{d}$ is an \re degree that bounds $L$, $\bm{d}$ must contain a $\geq\omega^\omega$-fickle set.
\end{Theorem}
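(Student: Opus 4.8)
The plan is to prove, by induction on $n$, that no $\leq\omega^n$-fickle \re degree bounds $L$; this gives the theorem, since an \re degree that bounds $L$ is then not $\leq\omega^n$-fickle for any $n$, hence $\geq\omega^\omega$-fickle (its fickleness is a power of $\omega$ by Theorem~\ref{thm:collapse} and exceeds every $\omega^n$), and hence, the fickleness of a degree being realized by one of its sets, contains a $\geq\omega^\omega$-fickle set. So fix $n$, suppose toward a contradiction that $\mathbf d$ is $\leq\omega^n$-fickle and bounds $L$, and fix \re sets $A,B,C$ of degrees $\leqt\mathbf d$ realizing the relations, together with functionals giving $A=\Phi_A(B\oplus C)$, $B=\Phi_B(A\oplus C)$, $C=\Phi_C(A\oplus B)$. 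Every set $\leqt\mathbf d$ — in particular $A,B,C$ and any $\Delta^0_2$ set we shall build $\leqt\mathbf d$ — then has a canonical $\mathcal R$-c.a.\ of order type $\omega^n$.

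The first step is to convert the two meet hypotheses into usable handles. Build partial computable functionals $\Psi_A$ with oracle $A$ and $\Psi_B$ with oracle $B$, kept consistent throughout the construction, so that $W\defeq\Psi_A(A)=\Psi_B(B)$ is a well-defined $\Delta^0_2$ set; by $(\forall W\leqt A,B)[W\leqt C]$ it is then computable from $C$, and since we do not know the reduction in advance, the construction devotes a module $\mathcal M_e$ to each candidate $\Gamma_e$ with $W=\Gamma_e(C)$. Symmetrically, the second meet hypothesis lets us build a set $W'$ with $W'\leqt A$ and $W'\leqt C$, hence $W'\leqt B$. The nontriviality clause — one of $A\not\leqt C$, $B\not\leqt C$, $A\not\leqt B$ — is precisely what allows some module to create a genuine disagreement: the failing reducibility means no functional computes the relevant side, so the corresponding module is eventually free to diagonalize. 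The three cases of the disjunction are interchangeable up to relabelling, using the fact (derived from the two meet hypotheses) that $W\leqt A,B$ exactly when $W\leqt A,C$.

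The combinatorial core is a mind-change count run by induction on $n$ — equivalently, a tree-of-strategies construction whose modules have finite but unbounded rank — mirroring the identity $\omega^\omega=\bigcup_n\omega^n$. The base case $n=0$ is immediate from Theorem~\ref{thm:collapse}: the only $\leq\omega^0$-fickle \re degree is $\mathbf 0$, which cannot bound $L$, as it would force $A,B,C$ all to degree $\mathbf 0$, contradicting the nontriviality clause. For the inductive step one reads a canonical $\mathcal R$-c.a.\ of order type $\omega^{n+1}$ as $\omega$ many blocks, each behaving like an order type $\omega^n$ approximation until its ``top'' counter drops; whenever a module forces the approximation of $W$ (or of $W'$) to change, the coherence demanded by the $\Phi$'s and by the $\Gamma_e$'s propagates the change to $A$, $B$, or $C$, and the inductive hypothesis applied inside the current block forces a further top-level drop. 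Since the opponent may refresh the sub-budget only finitely often, after finitely many drops the order type $\omega^{n+1}$ budget of some fixed set among $A,B,C$ is exhausted, contradicting that it has a canonical approximation of that order type.

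The step I expect to be the main obstacle is exactly this propagation-and-exhaustion bookkeeping in the inductive step: arranging the modules for $W\leqt C$, $W'\leqt B$, and the three join reductions so that an attack forcing $W$ (or $W'$) to change a prescribed number of times really forces one \emph{fixed} set among $A,B,C$ — rather than a spread-out, unbounded collection of them — to overflow its order type $\omega^n$ mind-change budget, and to do so against a worst-case choice of the canonical approximations and of the opponent's sub-budget refreshes. A secondary point to verify is that the weak disjunctive form of the nontriviality hypothesis, together with the derived structure ($W\leqt A,B \iff W\leqt A,C$, and all three pairwise joins equal), is genuinely all that the original $M_3$ argument of~\citep{downey2020hierarchy} uses, so that that argument goes through verbatim.
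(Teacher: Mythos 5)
There is a genuine gap, and it sits exactly where you place your ``main obstacle'': the fickleness hypothesis never actually gets engaged. You propose to derive a contradiction by overflowing ``the order type $\omega^{n+1}$ budget of some fixed set among $A,B,C$'', but $A$, $B$, $C$ are \re sets, hence $\leq 1$-fickle; their mind-change budgets cannot be overflowed by anything. In the argument this theorem actually rests on (and which Section~\ref{sec:necessary} adapts), the opponent whose ordinal budget is exhausted is a canonical computable approximation $(f^e,o^e)$ --- drawn from the uniform enumeration of Lemma~\ref{lemma:enum} --- of an auxiliary function $\Delta(X)$ that the construction itself builds with large uses $\delta(n)>\lambda(u)>u>\cdots$, arranged so that every ``peel'' of an outer layer forces an $X$-change below $\delta(n)$ and hence a strict drop of $o^e(n)$. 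The number and type of layers set up at any moment is read off the Cantor normal form $o^e(n)=\omega^{r-1}m_{r-1}+\cdots+\omega m_1+m_0$ of the opponent's remaining ordinal, and the backup hierarchy $Q,\;Q_i,\;Q_{ij},\ldots,\Xi_{ijk}$ converts any simultaneous (double) peel into a win at one of the meet requirements. Your ``modules $\mathcal M_e$ for each $\Gamma_e$'' gesture at the right requirements but supply no mechanism that forces a disagreement; that mechanism is the layering.

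Relatedly, the induction on $n$ cannot be run as a black box. The inductive hypothesis is a statement about $\leq\omega^n$-fickle \emph{degrees}, whereas inside one ``block'' of an order type $\omega^{n+1}$ approximation you only have a partial run of an approximation, not a degree satisfying that hypothesis; already the step from $n=0$ to $n=1$ gets nothing from the trivial base case yet requires the full argument that totally $\omega$-c.a.\ degrees do not bound $L$. What is true is that the single uniform construction is organized by an effective induction on the current ordinal $o^e(n)$, which is a different and stronger statement than the degree-level induction you formulate. Finally, note a quantifier problem in your opening reduction: proving for each $n$ separately that no $\leq\omega^n$-fickle degree bounds $L$ only shows that for each $n$ some set in $\bm{d}$ fails to be $\leq\omega^n$-fickle, with the witness allowed to vary with $n$; the theorem asserts one set that is $\geq\omega^\omega$-fickle, and this single witness is precisely what running one construction against the one function $\Delta(X)$ provides.
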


The conditions in the theorem are satisfied by only two lattices in Lemma~\ref{lemma:3direct-oo}, including $M_3$. To show that all lattices in the lemma demand $\geq\omega^\omega$-fickleness, we strengthen Theorem~\ref{thm:oo-original} by removing one join condition:
\begin{Theorem} \label{thm:oo}
    Let $L$ be a structure with 3 elements $A,B,C$ satisfying
    \begin{align*}
        \begin{matrix}
            A\leq BC,\\
            B\leq AC,\\
            (\forall W \leq A, B)\; [W \leq C],\\
            (\forall W \leq A, C)\; [W \leq B],\\
            A\not\leq C\;\; (\mathrm{or } B\not\leq C \mathrm{ or } A\not\leq B).
        \end{matrix}
    \end{align*}
    If $\bm{d}$ is an \re degree that bounds $L$, $\bm{d}$ must contain a $\geq\omega^\omega$-fickle set.
\end{Theorem}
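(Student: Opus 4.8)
The plan is to adapt Downey and Greenberg's proof of Theorem~\ref{thm:oo-original}, weakening its reliance on the third join. Fix an \re degree $\bm d$ bounding $L$, so there are \re sets $A,B,C$ of degrees $\leqt\bm d$ together with Turing functionals witnessing $A=\Gamma(B\oplus C)$ and $B=\Delta(A\oplus C)$ (from $A\leq BC$ and $B\leq AC$), the two meet closures ``$W\leqt A,B\Rightarrow W\leqt C$'' and ``$W\leqt A,C\Rightarrow W\leqt B$'', and a diagonalization, say $A\neq\Xi(C)$ for every functional $\Xi$ (from $A\not\leq C$). A preliminary sanity check: these hypotheses already force $L$ to be non-distributive, since the two meet closures make the common lower bounds of $\{A,B\}$ and of $\{A,C\}$ coincide, so $A\cap B=A\cap C$, and distributivity would then give $A=A\cap(B\cup C)=(A\cap B)\cup(A\cap C)=A\cap C\leq C$, contradicting $A\not\leq C$. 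So by Theorem~\ref{thm:distributive} the obstruction we exploit --- a join-irreducible element that is not join-prime --- is genuinely present, just as for $M_3$.

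Towards a contradiction, fix $n\in\omega$ and assume $\bm d$ is $\leq\omega^n$-fickle, so every set $\leqt\bm d$ has a canonical $\mathcal R$-c.a.\ of order type $\omega^n$. I would run a priority construction (with a $\emptyset^{(k)}$ oracle for $k$ depending on $n$, or on a tree of strategies) that builds a set $W$ together with reductions witnessing $W\leqt A$ and $W\leqt B$, so that $W\leqt C$ by the first meet closure, say via $W=\Psi(C)$; and that defeats, for each pair $\langle w_e,m_e\rangle$ purporting to be a canonical $\omega^n$-c.a.\ of $W$ (with indices located using $\emptyset'''$), either the approximation ($\lim_s w_e(x,-)\neq W(x)$ for some $x$) or the mind-change discipline of $m_e$. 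To defeat one such pair one plays the usual backtracking game: pick a follower $x$, wait for $w_e(x,-)$ to commit, change $W(x)$, forcing enumerations into $A$ and into $B$ through the reductions for $W$; then $\Gamma$ and $\Delta$ force matching changes in $B\oplus C$ and in $A\oplus C$, and these propagate back to $W=\Psi(C)$; each genuinely forced change costs $m_e(x,-)$ a drop in ordinal. The construction nests these games: absorbing the opponent's response at level $j$ re-opens the game at level $j+1$, and each nesting level costs a full factor of $\omega$ because of how a single perturbation circulates around the triangle relating $A$, $B$ and $C$. Allowing roughly $n+1$ nested levels therefore forces $m_e$ to descend with order type $\geq\omega^{n+1}>\omega^n$, impossible for order type $\omega^n$; so no such $\langle w_e,m_e\rangle$ succeeds, $W$ is not $\omega^n$-fickle, yet $W\leqt A\leqt\bm d$ --- contradiction. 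Doing this for every $n$, together with Theorem~\ref{thm:collapse}, places a $\geq\omega^\omega$-fickle set in $\bm d$.

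The one place where the original proof uses $C\leq AB$ is to obtain a functional $\Lambda$ with $C=\Lambda(A\oplus B)$, which couples the approximation of $C$ to those of $A$ and $B$ and lets the construction predict the reaction of $C$ --- hence of $\Psi(C)=W$ --- to perturbations of $A$ and $B$. Without that join, $C$ may change on its own. The substitute is the remaining structure: $B\leq AC$ still gives $B\oplus C\equiv_\turing A\oplus C$, so $C$ sits on the same level as everything relevant, and the second meet closure $A\cap C\leq B$ does the coupling that $\Lambda$ did for $M_3$ --- anything visible below both $A$ and $C$ is caught below $B$, which is exactly what the counting needs when it must certify, from the $A$- and $B$-sides, a change it has forced into $C$. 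The main obstacle is therefore the re-engineering of the nesting: one must check that, using only $\Gamma$, $\Delta$ and the two meet reductions, the triangle of forced changes still closes up and, crucially, that dropping $\Lambda$ does not merge two nesting levels into one (which would force only $\omega^n$, not $\omega^{n+1}$, mind changes). I expect this to be the delicate part of the verification, though the symmetry broken by removing the join is partly restored by the symmetry of the two meet conditions in $A$.
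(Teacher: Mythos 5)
Your high-level plan --- adapt the Downey--Greenberg layering argument and let the two meet closures stand in for the missing join $C\leq AB$ --- is indeed the paper's strategy, and you correctly isolate the crux (whether the nesting collapses without $\Lambda$). But you then defer exactly that point, and it is the entire content of the theorem beyond Theorem~\ref{thm:oo-original}. Moreover, your proposed counting, ``each nesting level costs a full factor of $\omega$,'' is precisely what fails once $J_C$ is removed: a level keyed to the $AC$-meet cannot be extended beyond a single $AC$-pair, because there is no join above $C$ forcing a $C$-element to propagate, and extending into $B$ would re-injure the gate just passed. So the $AC$-levels are worth only a factor of $2$. The paper's fix is not to restore the factor of $\omega$ but to interleave twice as many levels, replacing each $ABAB$--$ACAC$ block by $ABAB$--$AC$--$ABAB$, with the $ABAB$-blocks of length $2m_k+1$ read off the Cantor normal form $o^e(n)=\omega m_1+m_0$ of the opponent's remaining ordinal; the product $\omega\cdot 2\cdot\omega\cdot 2\cdots$ still reaches $\omega^{\lceil|\rho|/2\rceil}$ and hence is cofinal in $\omega^\omega$ as the number of gates grows. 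Without this restructuring your induction stalls at $\omega^n$ for fixed $n$, which you acknowledge but do not repair.

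There are also two structural errors in the game you describe. First, the control is backwards: you cannot ``change $W(x)$, forcing enumerations into $A$ and into $B$.'' The sets $A,B,C$ belong to the opponent; since your $W$ must satisfy $W=\Gamma_A(A)=\Gamma_B(B)$ for functionals you build, you may change $W(x)$ only \emph{after} $A$ and $B$ have both changed below the relevant uses, and you have no way to force that. The paper's construction is reactive: it enumerates \re sets $Q$, $Q_i$, $Q_{ij}$ in a nested fallback, attacks only when the opponent's own layer-peeling (propagated through $\Phi_0,\Phi_1$) produces a simultaneous two-sided change, and measures fickleness not on these sets but on an auxiliary function $\Delta(X)$ with $\delta(n)\geq\lambda(u)$, so that each peel costs the opponent one mind-change of a function it is obliged to approximate. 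Counting mind-changes of $W$ itself cannot work, since $W$ is a set you control and its own approximation is cheap. Second, your endgame never uses the hypothesis $A\not\leq C$, yet without it the structure could be degenerate ($A=B=C$) and bounded below every nonzero degree; in the paper that hypothesis is the engine, as the bottom layer protects a functional $\Xi_{ijk}(C)=A$, and the contradiction is that either this reduction survives (refuting $A\not\leq C$) or a double peel hands you a diagonalization against one of the meet closures.
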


All lattices in the lemma satisfy the conditions in the strengthened Theorem~\ref{thm:oo}, allowing us to apply the theorem with Lemma~\ref{lemma:3direct-oo} to get:

\begin{Corollary} \label{cor:oo}
    The four lattices in the top row of Figure~\ref{fig:3direct} characterize the \re degrees that contain $\geq\omega^\omega$-fickle sets.
\end{Corollary}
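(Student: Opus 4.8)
The Corollary is just the conjunction of Lemma~\ref{lemma:3direct-oo} and Theorem~\ref{thm:oo}, so the plan is to assemble those two ingredients and supply the small lattice-theoretic bookkeeping that glues them together. The sufficiency half --- that every \re degree containing a $\geq\omega^\omega$-fickle set bounds each of the four top-row lattices of Figure~\ref{fig:3direct} --- is exactly Lemma~\ref{lemma:3direct-oo}, so nothing further is needed there. All the work is on the necessity half.

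For necessity, the plan is: in each of the four lattices exhibit a three-element substructure $\{A,B,C\}$ meeting the five hypotheses of Theorem~\ref{thm:oo}, i.e. $A\leq B\cup C$, $B\leq A\cup C$, $A\cap B\leq C$, $A\cap C\leq B$, and at least one of $A\not\leq C$, $B\not\leq C$, $A\not\leq B$. For $M_3$ the witness is classical: let $A,B,C$ be the three pairwise-incomparable middle elements, whose three pairwise joins all equal the top and whose three pairwise meets all equal the bottom; the five conditions are then immediate. For the other three lattices --- which are exactly the ones that fail the third join condition $C\leq A\cup B$ of the original Theorem~\ref{thm:oo-original}, the reason the weakening was introduced --- one reads a triple of incomparable elements off the Hasse diagram in Figure~\ref{fig:3direct} whose two relevant joins climb high enough and whose two relevant meets drop low enough, and then checks the five inequalities directly.

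It remains to note that bounding the full lattice entails bounding this $3$-element structure $L$ in the sense required by Theorem~\ref{thm:oo}. If $\iota$ is an embedding of the lattice below $\bm d$, then $\iota$ restricted to $\{A,B,C\}$ preserves joins (any lattice/USL embedding does), and it transfers the meet conditions as well: $\iota(A\cap B)$ is the meet of $\iota(A)$ and $\iota(B)$ in $\langle\Rt,\leqt\rangle$, so every \re degree $\bm W\leqt\iota(A),\iota(B)$ satisfies $\bm W\leqt\iota(A\cap B)\leqt\iota(C)$, and similarly for $A\cap C\leq B$; finally one of the strict non-comparabilities survives. Theorem~\ref{thm:oo} then produces a $\geq\omega^\omega$-fickle set in $\bm d$, completing the characterization.

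The only real obstacle is the finite lattice-combinatorial check that each of the three non-$M_3$ lattices genuinely contains such a triple, and in particular that the two meet conditions $A\cap B\leq C$ and $A\cap C\leq B$ hold there. Because these lattices are ``short and narrow'' ($\leq 3$-direct, at most seven elements), this is a bounded case analysis; the subtle point is to make sure the chosen triple does not covertly rely on the discarded join condition $C\leq A\cup B$ to drive the argument --- which is exactly what is guaranteed by the fact that Theorem~\ref{thm:oo} was established without that condition.
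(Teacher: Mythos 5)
Your proposal is correct and is essentially the paper's argument: the corollary is obtained by combining Lemma~\ref{lemma:3direct-oo} (sufficiency) with Theorem~\ref{thm:oo} (necessity), after checking that each of the four lattices contains a triple $A,B,C$ satisfying the weakened hypotheses, with the meet conditions transferring through the (meet-preserving) embedding exactly as you describe. One small inaccuracy in your framing: the paper states that \emph{two} of the four lattices (not just $M_3$) already satisfy the original Theorem~\ref{thm:oo-original}, so only the remaining two require the strengthened version --- but since you verify the weaker hypotheses for all four, this does not affect the proof.
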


\begin{figure}[tpb]
    \centering
    \tikzset{every picture/.style={thick}}
    \begin{tikzpicture}[every node/.style={rectangle,draw=none,fill=none}]
        \tikzset{pics/gate/.style n args={3}{code ={
            \node at (-1,0) {$#1$};
            \node[text width=10em,align=left] at (59+#2em,0.2) {$#3$};
            \draw (0,0)--(5,0);
            }}, pics/gate/.default={\eta}{0}{abab}
        }
        \tikzset{pics/down/.style n args={2}{code ={
            \draw[->] (0.45*#1-0.38,#2-0.2)--(0.45*#1-0.38,#2-0.7);
            }}, pics/down/.default={0}{2}
        }
        \tikzset{traces1/.pic ={
            \node at (-1.2,1) {$\rho:A\neq \Xi(C)$};
            \node at (0.8,1) {$abababab$};
            \path (0,0) pic{down={1.0}{1}};
            \path (0,0) pic{down={1.4}{1}};
            \path (0,0) pic{down={1.8}{1}};
            \path (0,0) pic{down={2.2}{1}};
            \path (0,0) pic{down={2.6}{1}};
            \path (0,0) pic{down={3.0}{1}};
            \path (0,0) pic{down={3.4}{1}};
            \path (0,0) pic{down={3.8}{1}};
            \path (0,0) pic{down={4.2}{1}};
            \path (0, 0) pic{gate={AB\mathrm{-gate}}{0}{abababa\cancel{b}}};
            \path (0,-1) pic{gate={AC\mathrm{-gate}}{3}{a\cancel{c}}};
            \path (0,-2) pic{gate={AB\mathrm{-gate}}{3}{ababababababa\cancel{b}}};
            \path (0,0) pic{down={3.9}{0}};
            \path (0,0) pic{down={3.9}{-1}};
        }}
        \tikzset{traces2/.pic ={
            \node at (-1.2,1) {$\rho:A\neq \Xi(C)$};
            \node at (0.8,1) {$abababab$};
            \path (0,0) pic{down={1.0}{1}};
            \path (0,0) pic{down={1.4}{1}};
            \path (0,0) pic{down={1.8}{1}};
            \path (0,0) pic{down={2.2}{1}};
            \path (0,0) pic{down={2.6}{1}};
            \path (0,0) pic{down={3.0}{1}};
            \path (0,0) pic{down={3.4}{1}};
            \path (0,0) pic{down={3.8}{1}};
            \path (0,0) pic{down={4.2}{1}};
            \path (0, 0) pic{gate={AB\mathrm{-gate}}{0}{abababa\cancel{b}}};
            \path (0,0) pic{down={3.9}{0}};
            \path (0,-1) pic{gate={AC\mathrm{-gate}}{3}{acacacacacaca\cancel{c}}};
            \path (0,0) pic{down={9.1}{-1}};
            \path (0,-2) pic{gate={AB\mathrm{-gate}}{9}{\cdots}};
        }}
        \tikzset{pics/eq1/.style n args={2}{code ={
            \node[rectangle,draw=none,fill=none] at (0,0) {$\begin{array}{rclll}
                A &\leq &BC,\\
                B &\leq &AC
            \end{array}$};
            }}, pics/eq1/.default={x}{y}
        }
        \tikzset{pics/eq2/.style n args={2}{code ={
            \node[rectangle,draw=none,fill=none] at (0,0) {$\begin{array}{rclll}
                A &\leq &BC,\\
                B &\leq &AC,\\
                C &\leq &AB
            \end{array}$};
            }}, pics/eq2/.default={x}{y}
        }
        \path (0,0) pic{traces1};
        \path (8,1) pic{eq1};
        \path (0,-6) pic{traces2};
        \path (8,-6) pic{eq2};
    \end{tikzpicture}
    \caption{The top set of traces is obtained from the conditions in Theorem~\ref{thm:oo}, while the bottom set is from Theorem~\ref{thm:oo-original}, which has the additional join $J_C:C\leq AB$ shown at the right. The traces are generated by $\rho$. Without $J_C$, the top traces do not grow at the $AC$-gates, though they can still grow at the $AB$-gates. The lengths of the traces are bounded by $\omega^{|\rho|/2}$ and $\omega^{|\rho|}$ respectively.}
    \label{fig:traces-layer}
\end{figure}

The rest of this section will be used to prove Theorem~\ref{thm:oo}, where we tighten the layering technique of Theorem~\ref{thm:oo-original}. To get $\omega^\omega$, we generalize the framework in Section~\ref{sec:sufficient}, where we described the rules for the \emph{extending} and \emph{partitioning} of traces, so as to appease higher priority gates.

In that framework, diagonalization conditions, such as $\rho: A\neq \Xi(C)$ in Theorem~\ref{thm:oo}, generates traces, whose first element is known as the follower and is targeted for $A$. $\rho$ interacts with joins such as $J_A: A\leq BC$ and $J_B: B\leq AC$, to \emph{extend} the length of its trace. As illustrated at the top of Figure~\ref{fig:traces-layer}, the initial trace of $\rho$ is written in the top row as ``$abababab$'', to depict balls whose targets alternate between $A$ and $B$, and where the first ``$a$'' is the follower of $\rho$ is targeted for $A$. To obtain this $ABAB$-trace, we start with the follower. Since it is targeted for $A$, we apply $J_A$ to determine that the target for the next ball must be $B$ or $C$. In our example we picked $B$. Then since $J_B$ exists, the ball extending the second one needs to be targeted for $A$ or $C$ according to $J_B$. In our example we chose $A$. This process repeats until the follower gets realized.

Generally, we pick targets that make the trace shorter, so as to demand less fickleness. One might then wonder why we did not choose to target $C$ in the initial trace, since no $J_C$ requirements exist, meaning that $C$ elements do not need to be extended. The reason is $C$ elements cannot be picked until the follower is realized, otherwise the enumeration of those elements later will un-realize the follower. Therefore we are forced to pick a growing $ABAB$-trace at first. The initial situation is similar for Theorem~\ref{thm:oo-original}, where the first trace is an $ABAB$-one, as illustrated at the bottom of Figure~\ref{fig:traces-layer}.

After realization, the $ABAB$-trace descends and gets stopped at the lowest priority gate $\preceq\rho$ that does not allow the entire trace to pass. Each gate represents a meet condition, for which there are two types in our theorem:
\begin{align*}
    (\forall W \leq A, B)\; [W \leq C],\\
    (\forall W \leq A, C)\; [W \leq B].
\end{align*}

We shall call these $AB$- and $AC$-gates respectively. Gates do not allow elements from both of its sides to be enumerated at the same time. In our theorem as shown at the top of the figure, lower priority gates are drawn further up, so the lowest priority gate is an $AB$-gate, preventing elements from $A$ and $B$ from passing simultaneously. Since the initial trace contains elements for $A$ and $B$, the trace will be \emph{stopped} by this gate. A similar argument can be made for the $ABAB$-trace at the bottom of the figure, which is also stopped by the lowest priority $AB$-gate.

When the gate \emph{opens}, only elements from the same side of the gate can pass. Therefore, we \emph{partition} the stopped trace into a head and a tail, making the tail as long as possible, but ensuring that when the elements in the tail are simultaneously enumerated, no more than one side of the gate is injured. In the figure, the last element of the trace is targeted for $B$, so that $b$ can pass when the $AB$-gate opens. The passing injures the $B$-side of the gate, thereby \emph{closing} the gate, which will only reopen when the injured side has recovered. The $A$-element preceding $b$ cannot pass with $b$ because that will injure also the $A$-side of the gate. Hence, the original trace $abababab$ is \emph{partitioned} into the $abababa$ head and the $b$ tail, where only the tail passes, leaving the head at the gate. The tail descends to the next gates. These gates will eventually allow $b$ to pass, since a single $B$-enumeration will not injure both sides of any gate. $b$ passes all the gates, and gets enumerated, as depicted by the cancellation of $b$ in the figure.

After the tail is enumerated, we return to its head $abababa$, which remains at the lowest priority $AB$-gate. This trace terminates with an $A$-element, so we shall refer to such traces as $ABABA$-traces. While waiting for the stopping gate to reopen, the joins force us to extend the trace. We apply again the trace \emph{extension} rules described earlier, adding to the rules the criteria that the extension's targets should avoid the elements at the side of the stopping gate that is just injured. For instance, since the last element of the $ABABA$-trace is targeted for $A$, by the extension rules we can extend by a $B$- or $C$-element, and by the new criteria we cannot choose $B$ since the $AB$-gate has just been injured at the $B$-side. So we are forced to extend by $C$. By the extension rules again, in the top example, since there are no $J_C$ conditions, the trace stops growing. But in the bottom example, the rules say we must continue extending by $A$ or $B$, and by the new criteria we can only choose $A$, and repeating the extension rules we get an $ACAC$-extension.

When the $AB$-gate reopens, the stopped trace stops extending. We apply the \emph{partitioning} rules again to break the stopped trace into a head and tail, making the tail as long as possible without containing elements targeted for both sides of the gate. The tail passes the $AB$-gate until it gets stopped again. In the top example, when the $AB$-gate reopens, the waiting trace is an $ABABAC$-one, which we can partition into an $ABAB$-head and an $AC$-tail according to the partitioning rules, since $A$ and $C$-elements do not appear on both sides of the $AB$-gate. The tail passes the gate but is stopped by the next $AC$-gate, which prevents $A$ and $C$-elements from passing simultaneously. A similar argument can be made for the bottom example, where trace is partitioned to a $ACAC$-tail.

We repeat the process with the trace that is just stopped --- waiting for the stopping gate to open; partitioning the waiting trace into a head and a tail; allowing the tail to pass till it is stopped by a higher priority gate; repeating the process with the tail as the new waiting trace if the tail is not yet enumerated; returning to tend to the head when the tail is gone; extending the head by the extension rules when that happens; letting the extended head be the new waiting trace; and continuing until the follower is enumerated.

Every enumeration requests for one permission; therefore the total length of the traces bounds the fickleness required. Consider the length of the traces for the conditions in Theorem~\ref{thm:oo}. In our example at the top of Figure~\ref{fig:traces-layer}, $\rho$ has three higher priority gates that alternate between $AB$ and $AC$. Following the trace extension and partitioning rules, the length of the initial trace that gets stopped at the lowest priority $AB$-gate is bounded by $\omega$, since the trace is an $ABAB$-one that stops extending only when the follower of $\rho$ gets realized, which could take arbitrarily long. At the middle $AC$-gate, the stopped trace could be an $AC$-trace or a $BC$-trace. Either way, the length does not exceed two. At the bottom $AB$-gate, the stopped trace is an $ABAB$-one, which keeps extending until the previous $AC$-gate reopens, and that again could take arbitrary long. Hence the trace's length is bounded by $\omega$.

Now each element of the trace at the bottom gate requests for one permission, so the number of requests by a trace of length $<\omega$ at the bottom is $<\omega$. But every $<\omega$-length trace at the bottom gate is extended from one element at the middle $AC$-gate. Since the trace at the middle gate is never more than two elements long, if we consider only the effect from the bottom two gates, the fickleness requested is bounded by $\omega\cdot 2$. Repeating this argument, every 2-element trace at the middle is generated from one element at the top gate, and since there are $>\omega$-elements at the top, the total length of all the traces of $\rho$ is bounded by $\omega\cdot 2\cdot \omega=\omega^2$, in our example where $\rho$ has only three higher priority gates. More generally, given that $\rho$ will not have more than $|\rho|$ many higher priority gates, the total length of the traces from $\rho$ is bounded by
\[\underbrace{\omega\cdot 2 \cdot\omega \cdot 2 \cdot\ldots \cdot \omega \cdot 2\cdot\omega}_{\leq|\rho| \text{ many products}} <\omega^{\lceil |\rho|/2 \rceil},\]
obtained from the alternating $ABAB$ then $AC$-traces of length $<\omega$ and $\leq 2$ respectively.

Similarly, with the addition of $J_C$ in Theorem~\ref{thm:oo-original}, the traces of $\rho$ alternate between $ABAB$- and $ACAC$-ones of length $<\omega$, as shown at the bottom of Figure~\ref{fig:traces-layer}, bounding the total length of traces by
\[\underbrace{\omega\cdot \omega \cdot\ldots \cdot\omega}_{\leq|\rho| \text{ many products}} <\omega^{|\rho|}.\]

Given that $|\rho|$ can get arbitrarily large, the bound of the fickleness for our theorem is still $\omega^\omega$, even though our traces do not grow at every other gate. The alternating $ABAB$- and $AC$-trace of our theorem guides the modification of the proof of Theorem~\ref{thm:oo-original}. In the original proof, roughly speaking, the authors set up $<\omega^\omega$-many layers to protect $\rho:A\not\leq C$. The layers alternate between $ABAB$- and $ACAC$-layers, each of length $<\omega$, to reflect the $ABAB$- and $ACAC$-traces generated by $\rho$. We tighten their proof by replacing every pair of $ABAB$-$ACAC$-layers by $ABAB$-$AC$-$ABAB$-layers, where the $ABAB$-layers are of length $<\omega$, while the middle $AC$-layer is an $AC$-pair. These $ABAB$-$AC$-$ABAB$ layers reflect the $ABAB$-$AC$-$ABAB$-traces.

We reuse notation from \cite{downey2020hierarchy}. We describe the construction for the case with just two alternations of $AB$- then $AC$-gates, where we show that no \re set $X$ with degree fickleness $\leq\omega^2$ can embed structures satisfying the conditions of Theorem~\ref{thm:oo}. The generalization from $\omega^2$ to $\omega^n$ for arbitrary $n\in\omega$ will not be different from described in \citep{downey2020hierarchy}, so we focus on $\omega^2$, which illustrates most of the changes needed. Fix functionals $\Lambda$, $\Phi_0$, $\Phi_1$ such that $\Lambda(X)=(A,B,C)$, $A=\Phi_0(B,C)$, and $B=\Phi_1(A,C)$. For $x\in\omega$, define
\begin{align*}
    x^{(1)} \defeq &\;\max\{\phi_0(x), \phi_1(x)\},\\
    x^{(n+1)} \defeq &\; \left(x^{(n)}\right)^{(1)},
\end{align*}
with the idea that a change in $A$ or $B$ below $x$ necessitates a change in $A$ or $B$ or $C$ below $x^{(1)}$. We think of $x^{(n)}$ as having $n$ \emph{layers} of protection above $x$, which our opponent $X$ can peel with all the fickleness that it has remaining. Our bottom-most layer can be thought of as an $A$-layer that is associated with an $A\leq C$-computation that we want to stop $X$ from peeling at all costs. If we set up enough layers, we can prevent $X$ from reaching that $A$-layer, which would have resulted in an $A$-change that diagonalized $A$ against a $C$-computation. Since $X$ has $\leq\omega^2$-fickleness, we set up $\omega^2$ many layers above the bottom one. But this strategy will only work if $X$ peels layers one at a time. If layers are lost quickly, then $X$ might reach the bottom layer with $\leq\omega^2$-fickleness. When double peeling occurs, we arrange to win by a meet requirement. For instance, if $A$ and $B$-layers are simultaneously peeled, we will receive the $AB$-changes needed to diagonalize some set $Q\leq A,B$ against another $C$-computable function, allowing us to win by $(\exists Q\leq A,B) [Q\nleq C]$. Similarly, if the $AC$-layers are peeled, we can win by $(\exists Q\leq A,C) [Q\nleq B]$.

Let $\Delta(X)$ be the $\leq\omega^2$-fickle function in our construction. To know the number of layers to set up at a given stage, we want to know the fickleness that remains in $\Delta(X)$. In other words, we want to know the $\leq\omega^2$-fickle function that computably approximates $\Delta(X)$. By the following lemma, there are only countably many approximations to guess from, and we can effectively enumerate them:
\begin{Lemma}[\cite{downey2020hierarchy}] \label{lemma:enum}
    Let $\alpha\leq \epsilon_0$. There exists a canonical $\mathcal{R}$ of order type $\alpha$, a computable function $g(e):\omega \to R$ known as the \emph{bounding function}, and uniformly computable functions $f^e(x,s):\omega^3\to\omega$ and $m^e(x,s):\omega^3\to R$ known as the \emph{computable approximation functions}, such that $\langle f^e(x,s), m^e(x,s)\rangle_s$ is a $g(e)$-computable approximation of $f^e(x) :=\lim_s f^e(x,s)$, and $\langle f^e\rangle_{e\in\omega}$ enumerates the $<\alpha$-fickle functions.
\end{Lemma}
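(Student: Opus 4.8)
The plan is to fix once and for all a canonical well-ordering $\mathcal{R}=(R,<_\mathcal{R})$ of order type $\alpha$ (such a presentation exists for every $\alpha\leq\epsilon_0$ by the ordinal-notation machinery behind Definition~\ref{def:canonical}), and then to obtain $f^e,m^e$ by a \emph{patching} construction that coerces the $e$-th partial computable object into a genuine $\mathcal{R}$-c.a.\ bounded by $g(e)$. Read the index $e$ as coding a pair $\langle e',z\rangle$ with $z\in R$ — defaulting $z$ to the $<_\mathcal{R}$-least element when $e$ does not code such a pair — put $g(e):=z$, and interpret $\varphi_{e'}$ as a partial computable candidate for a map $(x,s)\mapsto\langle\text{value},\ \text{mind-change ordinal}\rangle$. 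Set $f^e(x,0)=0$ and $m^e(x,0)=g(e)$; at stage $s+1$ run $\varphi_{e'}(x,s+1)$ for $s+1$ steps, and if it halts with output $\langle v,w\rangle$ such that $w\in R$, $w\leq_\mathcal{R}m^e(x,s)$, and either $v=f^e(x,s)$ or $w<_\mathcal{R}m^e(x,s)$, then \emph{copy}: $f^e(x,s+1)=v$, $m^e(x,s+1)=w$; otherwise \emph{freeze}: $f^e(x,s+1)=f^e(x,s)$, $m^e(x,s+1)=m^e(x,s)$. Since $R$, $<_\mathcal{R}$, $\leq_\mathcal{R}$ are computable, $f^e$ and $m^e$ are uniformly computable in $e$ and $g$ is computable.

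By the freeze/copy rule the three clauses of Definition~\ref{def:comp-approx} hold at every $e$, all mind-change values stay $\leq_\mathcal{R}g(e)$, and $\lim_s f^e(x,s)$ exists because $m^e(x,-)$ is a non-increasing sequence in the well-order $\mathcal{R}$, hence eventually constant, after which $f^e(x,-)$ is frozen; so $\langle f^e(x,s),m^e(x,s)\rangle_s$ is a $g(e)$-computable approximation of $f^e(x):=\lim_s f^e(x,s)$. Since $\mathcal{R}$ restricted to $\{y:y\leq_\mathcal{R}g(e)\}$ inherits canonicalness (the Cantor-normal-form components of $y\leq_\mathcal{R}g(e)$ are themselves $\leq_\mathcal{R}g(e)$, using $\omega^\gamma>\gamma$), each $f^e$ is $<\alpha$-fickle — immediate when $\alpha$ is a limit ordinal, in particular a power of $\omega$, which is the case relevant to the hierarchy. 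For the reverse inclusion, suppose $h$ is $<\alpha$-fickle, so $h$ has a canonical $\mathcal{S}$-c.a.\ of some order type $\beta<\alpha$; let $z\in R$ be the element with $|z|=\beta$. Using the remark after Definition~\ref{def:fickleness} that for reasonably small ordinals the choice of canonical presentation is immaterial, transport this c.a.\ along the order-isomorphism $\mathcal{S}\to\mathcal{R}\restriction z$: the approximation values are unchanged while the mind-change ordinals are relabelled, yielding an $\mathcal{R}$-c.a.\ of $h$ with all mind-change values $<_\mathcal{R}z$, computed by some total $\varphi_{e'}$. At index $e=\langle e',z\rangle$ the construction then never freezes, so $f^e=h$. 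Hence $\langle f^e\rangle_{e\in\omega}$ enumerates exactly the $<\alpha$-fickle functions.

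The main obstacle — and the only place where the full strength of ``canonical'' is used rather than just ``computable well-ordering'' — is the transport step: producing, \emph{uniformly} from a canonical presentation $\mathcal{S}$ of $\beta$ together with the target element $z\in R$, the computable order-isomorphism $\mathcal{S}\cong\mathcal{R}\restriction z$, and doing so while keeping $g$ and the indexing computable. Uniqueness of isomorphisms of well-orders makes the map well-defined, and the computable Cantor-normal-form functions on both sides let one build it by recursion on normal forms; checking that this recursion is effective and uniform in all parameters is the technical heart. Everything else is routine bookkeeping with the freeze/copy construction.
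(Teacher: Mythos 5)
The paper contains no proof of Lemma~\ref{lemma:enum} --- it is quoted from Downey and Greenberg --- and your freeze/copy patching construction is essentially the standard argument used in that source (index pairs $\langle e',z\rangle$, coerce the $e'$-th partial computable candidate into a total $\mathcal{R}$-c.a.\ with mind-changes bounded by $z$, recover every genuine $<\alpha$-fickle function at a non-freezing index), so your proposal is correct and takes the same route. The one non-routine ingredient you lean on --- a computable isomorphism between two canonical presentations of the same ordinal $<\epsilon_0$, needed to transport a given $\mathcal{S}$-c.a.\ into $\mathcal{R}\restriction z$ --- is precisely the consequence of canonicalness that the paper itself invokes right after Definition~\ref{def:fickleness} ("we would be able to switch from one computable approximation to another recursively"), and you correctly isolate it; the remaining caveats are routine (the dovetailing of $\varphi_{e'}$ means "never freezes" should read "copies a subsequence with the same limit and with legal mind-change decreases", and the $|z|+1<\alpha$ point is harmless since the relevant $\alpha$ are limit powers of $\omega$).
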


We will not know which computable approximation works for $\Delta(X)$, so we simultaneously try all of them, letting the $e$-th \emph{agent} guess that $\Delta(X)$ is the $e$-th approximation $f^e,o^e$ as in the lemma, with $\Delta(X)=f^e$. At stage $s$, we keep track of the first $s$ steps of the first $s$ agents. As we shall see, the agents work independently because they construct their own sets and functionals that do not interfere with each other. If we guess that $\Delta(X)$ has $(\omega m_1+m_0)$ many fickleness remaining, we would like to have $m_1$, followed by $m_0$ many levels of protection. Each level involves a pair of elements, so we shall set up $(2m_1+1)$ inner $ABAB$-layers, followed by a middle $AC$-layer, followed by $(2m_0+1)$ outer $ABAB$-layers, as illustrated at the top of Figure~\ref{fig:layers}. The bottom-most $A$-layer works for $C\leq A$, the $ABAB$-layers work to construct a set $\leq A,B$ that cannot be computed by $C$, and the middle $AC$-layer works to construct a set $\leq A,C$ that cannot be computed by $B$.

\begin{figure}[tpb]
    \centering
    \tikzset{every picture/.style={thick}}
    \begin{tikzpicture}[every node/.style={rectangle,draw=none,fill=none}]
        \tikzset{pics/gate/.style n args={3}{code ={
            \node at (-1,0) {$#1$};
            \node[text width=10em,align=left] at (59+#2em,0.2) {$#3$};
            \draw (0,0)--(5,0);
            }}, pics/gate/.default={\eta}{0}{abab}
        }
        \tikzset{pics/down/.style n args={3}{code ={
            \draw[dashed] (#1,#2)--(#1,-2);
            \node at (#1,-2.2) {$#3$};
            }}, pics/down/.default={-2}{-1}{x}
        }
        \tikzset{pics/eq/.style n args={4}{code ={
            \node[rectangle,draw=none,fill=none] at (0,0) {$\begin{array}{c}
                \scriptstyle 2m_{#4}+1\\
                \scriptstyle A,#1\geq #2\neq #3
            \end{array}$};
            }}, pics/eq/.default={B}{Q_{ij}}{\Psi_k(C)}{1}
        }
        \tikzset{layer1/.pic ={
            \node at (0,0) {$ABAB\ldots ABAB$};
            \node at (1.1,0.5) {$AC$};
            \node at (2.2,1.0) {$ABAB\ldots ABAB$};
            \node at (0.0,-0.4) {$\underbrace{\hspace{2.6cm}}$};
            \path (-0.1,-1.0) pic{eq};
            \node at (2.2,1.3) {$\overbrace{\hspace{2.6cm}}$};
            \path (2.2,2.0) pic{eq={B}{Q}{\Psi_i(C)}{0}};
            \node at (1.1,2.6) {$\overbrace{\hspace{0.4cm}}^{A,C\geq Q_i \neq\Psi_j(B)}$};
            \path (0,0) pic{down={-1.3}{-0.5}{x_{ij}}};
            \path (0,0) pic{down={1.1}{-0.5}{u_{ij}<x_i<u_i<x}};
            \path (0,0) pic{down={3.4}{0.8}{u}};
            \node at (-1.4,0.5) {$\overbrace{\hspace{0.1cm}}^{\Xi_{ijk}(C)=A}$};
        }}
        \tikzset{layer2/.pic ={
            \node at (0,0) {$ABAB\ldots ABAB$};
            \node at (2.2,0.5) {$ACAC\ldots ACAC$};
            \node at (0.0,-0.4) {$\underbrace{\hspace{2.6cm}}$};
            \path (-0.1,-1.0) pic{eq={B}{Q}{\Psi_j(C)}{1}};
            \node at (2.2,0.8) {$\overbrace{\hspace{2.6cm}}$};
            \path (2.2,1.5) pic{eq={C}{Q}{\Psi_i(B)}{0}};
            \node at (-1.4,0.5) {$\overbrace{\hspace{0.1cm}}^{\Xi_{ij}(C)=A}$};
        }}
        \node at (-5,0) {Without $J_C: C\leq AB$};
        \path (0,0) pic{layer1};
        \node at (-5,-5) {With $J_C: C\leq AB$};
        \path (0,-5) pic{layer2};
    \end{tikzpicture}
    \caption{The top set of layers is obtained from the conditions in Theorem~\ref{thm:oo}, while the bottom set is from Theorem~\ref{thm:oo-original}, which has the additional join $J_C$. At the top, given that the opponent has $(\omega m_1+m_0)$ many fickleness remaining, we set up $(2m_1+1)$ inner $ABAB$-layers, followed by a middle $AC$-layer, followed by $(2m_0+1)$ outer $ABAB$-layers. Without $J_C$, the $AC$-layer cannot be longer. The $ABAB$-layers protect the $AB$-gate computations while the $AC$-layer protects the $AC$-gate computation. At the bottom, which has $J_C$, the $AC$-pair and second $ABAB$-layer can be replaced by a long $ACAC$-layer.}
    \label{fig:layers}
\end{figure}

The $e$-th agent sets up its layers by enumerating an \re set $Q=Q^e$ and functionals $\Gamma=\Gamma^e$ and $\Theta=\Theta^e$ with the aim of having
\[\Gamma(A)=\Theta(B)=Q \neq \Psi_i(C)\]
for all $i\in\omega$. If we fail at some $i$, we enumerate a backup \re set $Q_i=Q^e_i$ and functionals $\Gamma_i=\Gamma^e_i$ and $\Theta_i=\Theta^e_i$, with the aim of having
\[\Gamma_i(A)=\Theta_i(C)=Q_i \neq \Psi_j(B)\]
for all $j\in\omega$. If we fail at some $j$, we enumerate a backup \re set $Q_{ij}=Q^e_{ij}$ and functionals $\Gamma_{ij}=\Gamma^e_{ij}$ and $\Theta_{ij}=\Theta^e_{ij}$, with the aim of having
\[\Gamma_{ij}(A)=\Theta_{ij}(B)=Q_{ij} \neq \Psi_k(C)\]
for all $k\in\omega$. If we fail at some $k$, we enumerate a functional $\Xi_{ijk}=\Xi^e_{ijk}$ with the aim of having
\[\Xi_{ijk}(C) = A.\]

The agent appoints followers $x$, $x_i$, $x_{ij}$ targeted for $Q, Q_i, Q_{ij}$ respectively. Each $x_{ij}$ may change several $x_i$'s and each $x_i$ may change several $x$'s. If $n$ is the code associated with these three followers, then at a stage where $o^e(n)=\omega m_1+m_0$, we shall arrange that
\begin{align*}
    u_{ij}= &\;\max\left\{x_{ij}^{(2m_1+1)}, \gamma_{ij}(x_{ij})\right\},\\
    u_i= &\;\max\left\{x_i^{(2)}, \gamma_i(x_i)\right\},\\
    u= &\;\max\left\{x^{(2m_0+1)}, \gamma(x)\right\},
\end{align*}
\begin{align} \label{eq:u}
    x_{ij} <u_{ij} <x_i <u_i <x <u <\lambda(u_x) <\delta(n),
\end{align}
as illustrated at the top of Figure~\ref{fig:layers}, where $x_{ij}$ denotes the position of the bottom-most layer, $u_{ij}$ denotes the end of the $(2m_1+1)$ inner $ABAB$-layers, $u_i$ denotes the end of the middle $AC$-layers, while $u$ denotes the end of the $(2m_0+1)$ outer $ABAB$-layers. The $e$-th agent enumerates its version of $Q, Q_i, Q_{ij}$, so there is no conflict with other agents. We often drop the subscript $e$ for conciseness.

When the opponent attacks, it loses one fickleness and peels one or more layers. If the middle layers are attacked, resulting in $AB$, $BA$, or $AC$-changes, then we can win by the gate associated with those changes. Consider what happens when the opponent peels one layer at a time, or in order words, peels only the outermost layer whenever it attacks. What happens after the outer set of $ABAB$-layers, then the middle pair of $AC$-layers, which are those above $x_i$, have been peeled? The opponent's remaining fickleness would have descended to the next limit ordinal $\omega m_1$. Then when the opponent next attacks, it will peel the $B$-layer below $x_i$, and its ammunition would fall below the limit ordinal, say to $\omega (m_1-1) +m_0'$. The $B$-change causes $\Psi_j(B,x_i)$ to diverge, or, in other words, causes $x_i$ to be un-realized, which allows us to set up a new pair of middle $AC$-layers and $(2m_0'+1)$ outer $ABAB$-layers above the pair. This makes us ready for $(\omega (m_1-1) +m_0')$ many attacks. In other words, no more than two layers below $x_i$ will be lost before we can set up layers again, which makes the $(2m_0+1)$ many layers above $x_{ij}$ sufficient against our opponent.

In fuller detail, we pick large followers $x_{ij}<x_i<x$ satisfying Equation~\eqref{eq:u} and wait for the followers to be realized. While waiting, we re-pick $x_i$ and $x$, if necessary, to preserve the equation. In the meantime, we also update 
\[\delta(n)=\lambda(u).\]
After the followers are realized, set
\begin{align} \label{eq:psi}
    \xi(x_{ij}) &=\max\{u_{ij},\psi_k(x_{ij})\},\\
    \nonumber \theta_{ij}(x_{ij}) &=\max\{u_i,\psi_j(x_i)\},\\
    \nonumber \theta_i(x_i) &=\max\{u,\psi_i(x)\},\\
    \nonumber \theta(x) &=u.
\end{align}


Note that $\xi$ ($\theta_{ij}$, $\theta_i$) can be maintained because it only needs to exist if $\Psi_k$ (respectively $\Psi_j$, $\Psi_i$) is total. Consider the possible ways that the opponent may peel $x_{ij}'s$ $(2m_1+1)$ many layers that lie between $x_{ij}$ and $x_i$ and that are $\leq u_{ij}$. If $C$ changes below $u_{ij}$, then $\xi(x_{ij})\uparrow$ and $\delta(n)\uparrow$, which allows us to cancel all three followers and pick new large ones. If $C$ changes below $\psi_k(x_{ij})$ but not below $u_{ij}$, then $x_{ij}$ is un-realized and $\delta(n)\uparrow$, so we return to the earlier phase of waiting for realization to update Equations~\eqref{eq:psi}. If an inner $A$-layer is peeled ($\gamma_{ij}(x_{ij})\uparrow$), which results in the next $B$-layer being peeled ($\theta_{ij}(x_{ij})\uparrow$), then the simultaneous $AB$-change allows us to \emph{attack with $x_{ij}$} by enumerating $x_{ij}$ into $Q_{ij}$ and winning by $\Gamma_{ij}(A,x_{ij}) =\Theta_{ij}(B,x_{ij}) =Q_{ij}(x) \neq \Psi_k(C,x_{ij})$. If an $A$-change resulted instead in a $C$-change, then two of $x_i$'s layers will be peeled, since $\gamma_i(x_i),\theta_i(x_i)\geq x_i>u_{x_{ij}}$. This allows us to attack with $x_i$. If $x_{ij}$'s outer-most $B$-layer is peeled, then the $\theta_{ij}(x_{ij})$ and $\delta(n)$ changes allow us to cancel $x_i$ and $x$ and set up new large ones. Finally, if $x_{ij}$'s outer-most $A$-layer is peeled, then $x_{ij}$ indeed loses a layer.

Next, consider $x_i's$ two layers, which lie between $x_i$ and $x$ and that are $\leq u_i$. Similar to the smaller follower, $B$-changes below $u_i$ allow us to cancel $x_i$ and $x$, while $B$-changes below $\psi_j(x_i)$ allow us to wait for realization of $x_i$ again. Also, if the inner $A$-layer is peeled ($\gamma_i(x_i)\uparrow$), which results in the next $C$-layer to be peeled ($\theta_i(x_i)\uparrow$), then we can \emph{attack with $x_i$}. If an $A$-change resulted instead in a $B$-change, then two of $x$'s layers will be peeled, since $\gamma(x),\theta(x)\geq x>u_{x_i}$. This allows us to attack with $x$. If $x_i$'s outer-most $C$-layer is peeled, we can cancel $x$ and set up a new large one. Finally, if $x_i$'s outer-most $A$-layer is peeled, then $x_i$ indeed loses a layer.

Finally, consider $x's$ $(2m_0+1)$ many layers that lie between $x$ and $\leq u$. Similar to the smallest follower, $B$-changes below $u$ allow us to cancel $x$, while $C$-changes below $\psi_i(x)$ allow us to wait for realization of $x$ again. Also, if the inner $A$-layer is peeled ($\gamma(x)\uparrow$), which results in the next $B$-layer to be peeled ($\theta(x)\uparrow$), we can \emph{attack with $x$}. If an $A$-change resulted instead in a $C$-change, then this change below $u$ allows us to cancel $x$, as just mentioned. If $x$'s outer-most $A$ or $B$-layer is peeled, then $x$ indeed loses a layer.

Note that our two-gate construction is similar to the three-gate one of \citep{downey2020hierarchy}. But we need to be careful when inner $C$-layers are peeled, since we will not be able to obtain a $CA$- or $CB$-change without $J_C: C\leq BA$, and these changes could have helped us to win at a gate. The paragraphs above hint at why $J_C$ was not needed: When we obtain a solo $C$-change in the middle, we can cancel followers and set up new layers, to the effect of not losing layers.

Consider the trickier situation where layers between $x$ and $u$ are still unpeeled, but the $C$-layer below $u_i$ which belongs to $x_i$ is peeled ($\theta(x_i)\uparrow$). One might worry that the $A$-layer below the peeled layer will now become vulnerable. We argue that this is not the case: The $C$-change allows us to cancel $x$ and set up $(2m_0+1)$ many new layers above a new $x$. We are also allowed to lift $\theta_i(x_i)$ above the new $u$ and $\psi_i(x)$. Assume there are attacks on the $A$-layer ($\gamma(x_i)\uparrow$) below the lost $C$-layer due to a $B$-change. If this $A$-change occurred before the new $x$ is realized, then we can attack with $x_i$, since $\gamma(x_i)$ will not have converged yet. If the $A$-change occurred after realization, we will be able to attack with the new $x$ because the $AB$-change would be below the new $u_i$, which is in turn $\leq x\leq \gamma(x), \theta(x)$. Another concern is if the $B$-change occurred without the $A$-layer below being peeled away. The $B$-change would exceed $\theta_{ij}(x_{ij})$, making us unable to cancel $x_i$, as we would have before the $C$-layer was gone. The $B$-change must lead to an $A$- or $C$-change on the layer above. In the former case, we can attack with $x$, and in the latter case, we can cancel $x$ and set up new layers, as in the situation when the original $C$-layer was peeled.

A symmetrical argument can be made to show how the outer-most $A$-layer of $x_{ij}$ cannot be peeled as long as there are layers belonging to $x_i$ sitting above. We are also prepared against attacks on the middle layers. For instance, if the opponent attacked the layers of $x_{ij}$ ($x$), we can obtain changes in $C$, $AC$, $BC$, $AB$, or in $BA$. In the first three cases, we may cancel $x_{ij}$ (resp. $x$), and in the last two cases we may attack with $x_{ij}$ (resp. $x$). If we attacked the middle layers of $x_i$, changes in $AC$ allow us to attack with $x_i$, changes in $C$ or $BC$ allow us to cancel $x$, while changes in $AB$ or $BA$ allow us to win by $x$.

The precise verification of this 2-gate construction is similar to the 3-gate construction of \citep{downey2020hierarchy}. To generalize to the $n$-gate case, the generalization in \citep{downey2020hierarchy} still applies: If we guess that $\Delta(X)$ is $\omega^r$-fickle, so that
\[o^e(n)=\omega^{r-1}m_{r-1} +\ldots +\omega m_1 +m_0,\]
we set up $2r-1$ many followers that alternate between having $AB$- and $AC$-layers, and a follower associated with $AC$ has only two $AC$-layers above it, while the $k$-th largest follower associated with $AB$ has $2m_k+1$ $AB$-layers above it.

\section{Rejecting some ``Larger'' Lattices Quickly} \label{sec:reject}
Since the $\leq3$-direct lattices are not $>\omega^2$-lattices, we considered larger structures as candidates. Given that $L_7$ characterizes the $>\omega$-levels, it was speculated that we could reach the $>\omega^2$-levels by making two $L_7$'s interact with each other. The two candidates in Figure~\ref{fig:lempp-lerman} were suggested as a result. However, we can use earlier results to reject these candidates immediately: Theorem~\ref{thm:oo} says that any structure that satisfies the conditions of the theorem demands at least $\omega^\omega$-fickleness and therefore cannot be a candidate $>\omega^2$-lattice. In particular, since the four lattices in the top row of Figure~\ref{fig:3direct} satisfy those conditions, we get:
\begin{Corollary} \label{cor:non-candidate}
    A structure cannot characterize $>\omega^2$-fickleness if it contains, as sublattice, any of the four lattices in the top row of Figure~\ref{fig:3direct}.
\end{Corollary}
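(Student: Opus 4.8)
The plan is to read this off directly from Theorem~\ref{thm:oo} and the collapse theorem, Theorem~\ref{thm:collapse}. First I would fix one of the four lattices in the top row of Figure~\ref{fig:3direct} --- call it $M$ --- and recall that, as was used in deriving Corollary~\ref{cor:oo}, $M$ has three elements $A,B,C$ satisfying the hypotheses of Theorem~\ref{thm:oo}: $A\leq BC$, $B\leq AC$, the two meet conditions $(\forall W\leq A,B)[W\leq C]$ and $(\forall W\leq A,C)[W\leq B]$, and one non-join relation such as $A\not\leq C$. I would then let $L$ be an arbitrary structure containing $M$ as a sublattice, and show that every \re degree bounding $L$ contains a $\geq\omega^\omega$-fickle set.

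For that step: if an \re degree $\bm d$ bounds $L$, then composing the sublattice inclusion $M\hookrightarrow L$ with the lattice embedding $L\hookrightarrow\langle\Rt,\leqt\rangle$ (whose image lies below $\bm d$) shows that $\bm d$ bounds $M$ as well. Since $M$ satisfies the hypotheses of Theorem~\ref{thm:oo}, the theorem applies to the images of $A,B,C$ and yields a $\geq\omega^\omega$-fickle set in $\bm d$. Hence the \re degrees bounding $L$ all contain a $\geq\omega^\omega$-fickle set.

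Finally, I would separate this class from the $>\omega^2$-fickle degrees by an explicit example. By Theorem~\ref{thm:collapse}, since $\omega^3$ is a power of $\omega$ below $\epsilon_0$, there is an \re degree $\bm d$ that is $\omega^3$-fickle. Then $\bm d$ is $\leq\omega^3$-fickle, so by Definition~\ref{def:fickleness} every set in $\bm d$ has fickleness $\leq\omega^3<\omega^\omega$, and thus $\bm d$ contains no $\geq\omega^\omega$-fickle set; yet the sets in $\bm d$ cannot all be $\leq\omega^2$-fickle (else $\bm d$ would be $\leq\omega^2$-fickle, contradicting $\omega^3$-fickleness), so $\bm d$ does contain a set of fickleness $>\omega^2$. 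Consequently, if $L$ characterized the \re degrees containing $>\omega^2$-fickle sets, then this $\bm d$ would bound $L$, and hence --- by the previous step --- would contain a $\geq\omega^\omega$-fickle set, a contradiction. Therefore no structure containing one of the four top-row lattices can characterize $>\omega^2$-fickleness.

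The substance here is already carried by Theorem~\ref{thm:oo}, so the only point I expect to need a moment's care is the bookkeeping in the middle step: one must make sure that ``sublattice'' is meant in the sense of a subset closed under the ambient meets and joins, so that the composed map genuinely supplies Theorem~\ref{thm:oo} with the meet conditions $(\forall W\leq A,B)[W\leq C]$ and $(\forall W\leq A,C)[W\leq B]$ for the images of $A,B,C$ in $\Rt$, rather than merely an order-theoretic copy of $M$ inside $L$. Beyond that the corollary is immediate and requires no new priority construction.
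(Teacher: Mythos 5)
Your proposal is correct and follows essentially the same route as the paper, which derives the corollary directly from Theorem~\ref{thm:oo} (the four top-row lattices satisfy its hypotheses, so any structure containing one of them as a sublattice demands $\geq\omega^\omega$-fickleness of every bounding degree). Your additional explicit witness --- an $\omega^3$-fickle degree from Theorem~\ref{thm:collapse}, which contains a $>\omega^2$-fickle set but no $\geq\omega^\omega$-fickle one --- and your remark that ``sublattice'' must be meant in the meet- and join-preserving sense are exactly the details the paper leaves implicit.
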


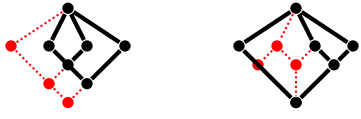
\begin{figure}[tpb]
    \centering
    \tikzset{every picture/.style={thick}}
    \begin{tikzpicture}[every node/.style={circle,fill=black,inner sep=1.5pt}]
        \tikzset{lempp/.pic ={
            \node[fill=red] (0) at (0,-1) {};
            \node (1) at (0,1.5) {};
            \node[fill=red] (A) at (-0.5,-0.5) {};
            \node (X) at (0.5,-0.5) {};
            \node (AX) at (0,0) {};
            \node (BX) at (-0.5,0.5) {};
            \node (AY) at (0.5,0.5) {};
            \node[fill=red] (C) at (-1.5,0.5) {};
            \node (Z) at (1.5,0.5) {};
            \draw[ultra thick] (1) --(Z) --(X) --(BX) --(1) --(AY) --(AX);
            \draw[densely dotted,red] (1) --(C) --(A) --(0) --(X);
            \draw[densely dotted,red] (AX) --(A);
        }}
        \tikzset{lerman/.pic ={
            \node (0) at (0,-1) {};
            \node (1) at (0,1.5) {};
            \node (AY) at (0.5,0.5) {};
            \node (C) at (-1.5,0.5) {};
            \node (Z) at (1.5,0.5) {};
            \node (Y) at (1,0) {};
            \node (X) at (0,-1) {};
            \node[fill=red] (AX) at (0,0) {};
            \node[fill=red] (BX) at (-0.5,0.5) {};
            \node[fill=red] (B) at (-1,0) {};
            \draw[ultra thick] (1) --(C) --(0) --(Y) --(Z) --(1) --(AY) --(Y);
            \draw[densely dotted,red] (1) --(BX) --(AX) --(X);
            \draw[densely dotted,red] (AY) --(AX);
            \draw[densely dotted,red] (BX) --(B);
        }}
        \tikzset{large/.pic ={
            \node[red] (abc) at (0,4) {};
            \node[red] (ab) at (-4.5,3) {};
            \node[red] (ac) at (0,3) {};
            \node[red] (bc) at (4.5,3) {};
            \node[red] (abnac) at (-4.5,2) {};
            \node[red] (abnbc) at (0,2) {};
            \node[red] (acnbc) at (4.5,2) {};
            \node (abnacnbc) at (0,1) {};
            \node[red] (a) at (-4.5,0) {};
            \node[red] (b) at (0,0) {};
            \node[red] (c) at (4.5,0) {};
            \node (anbc) at (-4.5,-1) {};
            \node (bnac) at (0,-1) {};
            \node (cnab) at (4.5,-1) {};
            \node (0) at (0,-2) {};
            \draw[densely dotted,red] (abc)--(ab)--(abnac)--(a)--(anbc)--(0)--(cnab)--(c)--(acnbc)--(bc)--(abc)--(ac)--(abnac)--(abnacnbc)--(acnbc)--(ac);
            \draw[densely dotted,red] (ab)--(abnbc)--(bc) (abnbc)--(abnacnbc)--(anbc) (abnacnbc)--(cnab) (b)--(bnac)--(0);
            \draw[densely dotted,red] (b) edge[bend left] (abnbc);
            \draw[ultra thick] (bnac) edge[bend right] (abnacnbc);
            \draw[ultra thick] (bnac)--(0)--(anbc)--(abnacnbc)--(cnab)--(0);
        }}
        \path (-3,0) pic[scale=0.5]{lempp};
        \path (0,0) pic[scale=0.5]{lerman};
    \end{tikzpicture}
    \caption{These lattices cannot characterize $>\omega^2$-fickleness by Corollary~\ref{cor:non-candidate}. The overly fickle sublattice is shown in solid black lines. The lattices were suggested by Steffen Lempp and Manuel Lerman respectively.}
    \label{fig:lempp-lerman}
\end{figure}

\section{Some ``Short'' and ``Narrow'' Upper Semilattices (USLs)} \label{sec:usl}
We also considered USLs $\mathcal{U} =(U,\leq,\cup,\cap)$ as $>\omega^2$-candidates. In a USL, the meet function $\cap$ may not be total. We obtained USLs by breaking some meets of the finite lattices considered earlier. For example, if we removed the meet of $M_3$, we would get the right most USL at the bottom row of Figure~\ref{fig:usls}, where the removed meet is depicted by an open circle. The figure also shows other USLs based on lattices in Figure~\ref{fig:3direct}. We can show that:

\begin{figure}[tpb]
    \centering
    \tikzset{every picture/.style={thick}}
    \begin{tikzpicture}[every node/.style={circle,draw=black,fill=black,inner sep=1.2pt}]
        \tikzset{pics/oo0/.style n args={2}{code ={
            \node[fill=#2] (d) at (0,-1) {};
            \node (l) at (-1,0) {};
            \node (u) at (0,1) {};
            \node (r) at (1,0) {};
            \node (c) at (0,0) {};
            \node[fill=#1] (dl) at (-0.5,-0.5) {};
            \draw (dl) --(c) --(u) --(r) --(d) --(dl) --(l) --(u);
            }}, pics/oo0/.default={black}{black}
        }
        \tikzset{pics/oo0s/.style n args={2}{code ={
            \path (#2,0) pic[scale=#1]{oo0={black}{white}};
            \path (#2*2,0) pic[scale=#1]{oo0={white}{white}};
            }}, pics/oo0s/.default={1.0}{3}
        }
        \tikzset{pics/oo1/.style n args={2}{code ={
            \node[fill=#2] (d) at (0,-1) {};
            \node (l) at (-1,0) {};
            \node (u) at (0,1) {};
            \node (ul) at (-0.5,0.5) {};
            \node (r) at (1,0) {};
            \node (c) at (0,0) {};
            \node[fill=#1] (dr) at (0.5,-0.5) {};
            \draw[thick] (dr) --(c) --(ul) --(u) --(r) --(dr) --(d) --(l) --(ul);
            }}, pics/oo1/.default={black}{black}
        }
        \tikzset{pics/oo1s/.style n args={2}{code ={
            \path (#2,0) pic[scale=#1]{oo1={black}{white}};
            \path (#2*2,0) pic[scale=#1]{oo1={white}{white}};
            }}, pics/oo1s/.default={1.0}{3}
        }
        \tikzset{pics/oo2/.style n args={2}{code ={
            \node[fill=#1] (d) at (0, -1) {};
            \node (lu) at (-0.5, 0.5) {};
            \node (l) at (-1, 0) {};
            \node (u) at (0, 1) {};
            \node (r) at (1, 0) {};
            \node (c) at (0, 0) {};
            \draw (lu)--(c)--(d)--(l)--(lu)--(u)--(r)--(d);
            }}, pics/oo2/.default={black}{0}
        }
        \tikzset{pics/oo2s/.style n args={2}{code ={
            \path (#2,0) pic[scale=#1]{oo2={white}{0}};
            }}, pics/oo2s/.default={1.0}{3}
        }
        \tikzset{pics/m3/.style n args={2}{code ={
            \node[fill=#1] (a) at (0, -1) {};
            \node (b) at (-1, 0) {};
            \node (c) at (0, 1) {};
            \node (d) at (1, 0) {};
            \node (e) at (0, 0) {};
            \draw (a) -- (b) -- (c) -- (d) -- (a) -- (e) -- (c);
            }}, pics/m3/.default={black}{0}
        }
        \tikzset{pics/m3s/.style n args={2}{code ={
            \path (#2,0) pic[scale=#1]{m3={white}{0}};
            }}, pics/m3s/.default={1.0}{3}
        }
        \tikzset{pics/l7/.style n args={3}{code ={
            \node[fill=#3] (a) at (0, -1) {};
            \node (b) at (-1, 0) {};
            \node (c) at (0, 1) {};
            \node (d) at (1, 0) {};
            \node (e) at (0, 0) {};
            \draw[thick] (a) -- (b) -- (c) -- (d) -- (a);
            \node[fill=#1] (f) at (-0.5, -0.5) {};
            \node[fill=#2] (g) at (0.5, -0.5) {};
            \draw[thick] (c)--(e)--(f) (e)--(g);
            }}, pics/l7/.default={black}{black}{black}
        }
        \tikzset{pics/l7s/.style n args={2}{code ={
            \path (#2,0) pic[scale=#1]{l7={black}{black}{white}};
            \path (#2*2,0) pic[scale=#1]{l7={black}{white}{white}};
            \path (#2*3,0) pic[scale=#1]{l7={white}{white}{white}};
            }}, pics/l7s/.default={1.0}{3}
        }
        \path (0,1.5) pic{l7s={0.4}{1.0}};
        \path (0,0) pic{oo0s={0.4}{1.0}};
        \path (3,0) pic{oo1s={0.4}{1.0}};
        \path (6,0) pic{oo2s={0.4}{1.0}};
        \path (8,0) pic{m3s={0.4}{1.0}};
    \end{tikzpicture}
    \caption{Here are some USLs obtained by removing some meets of lattices in Figure~\ref{fig:3direct}. Removed meets are shown as open circles. For instance, the USLs at the top are based on $L_7$ (Figure~\ref{fig:l7}). Each USL turns out to characterize the same degrees as its lattice. Specifically, those at the top characterize $>\omega$-fickleness, while those below characterize $\geq\omega^\omega$-fickleness.}
    \label{fig:usls}
\end{figure}

\begin{Theorem} \label{thm:usls}
    The USLs in Figure~\ref{fig:usls} characterize the same \re degrees as the lattice they are based on.
\end{Theorem}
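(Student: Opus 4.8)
The plan is to establish, for each USL $U$ in Figure~\ref{fig:usls} and the lattice $L$ from which it is obtained, that an \re degree $\bm{d}$ bounds $U$ exactly when it bounds $L$. The ``if'' direction requires nothing new: every $U$ in the figure is $L$ with one or more of its minimal elements removed (the meets drawn as open circles), and deleting a minimal element from a finite lattice changes no join and destroys no surviving meet, so the inclusion $U\hookrightarrow L$ is a USL embedding; composing it with a lattice embedding of $L$ below $\bm{d}$ yields a USL embedding of $U$ below $\bm{d}$.

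For the ``only if'' direction I would pass through the fickleness characterizations already available. By Corollary~\ref{cor:oo} and the $L_7$-theorem of \citep{downey2007totally,ambos2019universally}, a lattice $L$ as in the figure is bounded below $\bm{d}$ precisely when $\bm{d}$ contains a $\kappa_L$-fickle set, with $\kappa_L$ being ``$>\omega$'' for the $L_7$-based $U$'s and ``$\geq\omega^\omega$'' for the others. So it suffices to show that whenever $U$ is bounded below $\bm{d}$, the degree $\bm{d}$ contains a $\kappa_L$-fickle set --- Lemma~\ref{lemma:3direct-oo} (respectively the sufficiency half of the $L_7$-theorem) then re-embeds all of $L$ below $\bm{d}$. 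To get this fickleness lower bound I would re-run the relevant necessity construction against the given USL embedding: Theorem~\ref{thm:oo} with its $n$-gate generalization for the $\geq\omega^\omega$ case, and the layering argument behind the $L_7$-theorem for the $>\omega$ case. Each such argument uses its input structure only through (i) the join conditions $A\leq BC$, $B\leq AC$ (and for Theorem~\ref{thm:oo-original} also $C\leq AB$), which supply the functionals from which the layers and traces are built, and (ii) the common-lower-bound conditions $(\forall W\leq A,B)[W\leq C]$ and $(\forall W\leq A,C)[W\leq B]$, which are invoked only to turn a double-peeling of layers into a contradiction. A USL embedding preserves joins verbatim, so (i) transfers with no change; the issue is (ii), since removing the meets is exactly what may make those conditions fail.

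I would dispatch (ii) by a finite case analysis over the USLs in Figure~\ref{fig:usls}. When $U$ still keeps a meet usable as the meet in Theorem~\ref{thm:oo} --- for instance an $L_7$-based $U$ that retains $\bm{a_0}\cap\bm{a_1}$ --- that meet is preserved by any USL embedding and lies below the required point, so the corresponding common-lower-bound condition still holds and the necessity argument goes through unchanged with that choice of $A,B,C$. When $U$ has had every usable meet stripped away, the necessity argument has to be rebuilt: running the layered construction against the embedding still forces the bottom layer, carrying the reduction $\Xi(C)=A$, to survive and contradict $A\not\leq C$ unless the opponent double-peels, but a double-peeling no longer produces an immediate contradiction, so from it one must instead extract extra structure below $\bm{d}$ --- an \re set $Q=\Gamma(A)=\Theta(B)\leqt A,B$ diagonalized against every $C$-functional, together with the joins and the remaining incomparabilities that $U$ still carries --- and argue that this structure is itself too rich to sit below a degree of fickleness $<\kappa_L$. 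Producing this substitute for the missing common-lower-bound conditions, uniformly over the finitely many meet-free USLs in the figure, is the step I expect to be the main obstacle; everything else --- the ``if'' direction, the reduction through the established characterizations, and the USLs that keep a usable meet --- should be routine given the tools already in the paper.
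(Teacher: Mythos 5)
There is a genuine gap, and it stems from how you interpret ``bounding a USL.'' In this paper a USL obtained by deleting a meet of $L$ is \emph{not} just the induced partial order with its surviving joins and meets: the requirements for bounding the $M_3$-based USL (Figure~\ref{fig:usl-m3}) include the \emph{non-meet} conditions --- $A\cap B$, $A\cap C$, $B\cap C$ must actually fail to exist in $\Rt$ --- together with the residual conditions $(\forall W\leq A,B)[W\leq C]$, etc., inherited from $M_3$. (If one dropped all of these, as in the partial lattices of \citep{lempp2006embedding}, the resulting structure embeds below every nonzero \re degree and the theorem would be false.) This inverts your assessment of which direction is hard. Your ``if'' direction fails: restricting an $M_3$-embedding to the USL leaves the image of the bottom of $M_3$ sitting in $\Rt$ as a greatest common lower bound of the images of $A$ and $B$, so the non-meet requirement is violated and bounding $L$ does not, by restriction, yield a bounding of $U$. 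What is actually needed --- and what occupies almost all of Section~\ref{sec:usl} --- is a fresh priority construction in which, for every \re $V=\Gamma_A(A)=\Gamma_B(B)$, one builds $E\leq_{\turing} A,B,C$ with $\Pi(V)\neq E$ for all $\Pi$ (the $\tau$ and $\tau_\Pi$ requirements), and one must verify that these new positive requirements can be permitted with only $<\omega^{\omega}$ fickleness and can coexist with the gates and joins (the use-lifting and re-targeting machinery, Lemmas~\ref{lemma:tau}--\ref{lemma:eta}). None of this appears in your proposal.

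Conversely, the step you single out as the main obstacle --- recovering the common-lower-bound conditions for the ``meet-free'' USLs in the necessity direction --- is not an obstacle under the paper's definition: those conditions are retained verbatim as part of what it means to bound the USL (the ``Meet'' lines of Figure~\ref{fig:usl-m3}), so Theorem~\ref{thm:oo} applies directly and the necessity of $\geq\omega^{\omega}$- (resp.\ $>\omega$-) fickleness is immediate. Your instinct that something must substitute for the deleted meet is correct, but the substitute is built into the statement of the embedding problem rather than extracted from a double-peeling; had it not been, the necessity claim would be outright false rather than merely hard. In short, the direction you dispatch in one sentence is the theorem's real content, and the direction you flag as the main difficulty is the easy one.
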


In particular, the USLs considered do not characterize $>\omega^2$-fickleness. From the theorem, we conjecture:
\begin{Conjecture} \label{conj:usl-equals-lattice}
    Let $U$ be a USL obtained by removing some meets of a finite lattice $L$. An \re degree bounds $U$ if and only if it bounds $L$.
\end{Conjecture}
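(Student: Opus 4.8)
The plan is to prove each direction of the biconditional separately. The direction ``$\bm{d}$ bounds $L$ $\Rightarrow$ $\bm{d}$ bounds $U$'' is immediate and requires no new work: since $U$ is obtained from $L$ by removing meet(s), its partial order and join operation are those of $L$ (restricted to the surviving elements), and no join of surviving elements is disturbed; hence any embedding of $L$ into the \re degrees below $\bm{d}$ restricts to an embedding of $U$ there. Via Lemma~\ref{lemma:3direct-oo}, the known embedding of $L_7$, and the embeddings of the distributive lattices, every degree of the fickleness that suffices for $L$ therefore also bounds $U$. All of the content is in the converse, ``$\bm{d}$ bounds $U$ $\Rightarrow$ $\bm{d}$ bounds $L$'', which — because for the lattices under discussion embeddability below $\bm{d}$ is equivalent to $\bm{d}$ containing a set of the appropriate fickleness — amounts to showing that merely bounding $U$ already forces that fickleness.

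The concrete approach I would take for this converse is to rerun, for each parent lattice $L$, the \emph{necessity} argument that $L$ satisfies, but starting from a copy of $U$ rather than a copy of $L$. For the $M_3$-based USL and the other USLs whose parents characterize $\geq\omega^\omega$-fickleness, the natural tool is Theorem~\ref{thm:oo}: it is stated for an arbitrary \emph{structure} on three elements $A,B,C$, and its hypotheses — the joins $A\leq BC$ and $B\leq AC$, the common-lower-bound conditions $(\forall W\leq A,B)\,[W\leq C]$ and $(\forall W\leq A,C)\,[W\leq B]$, and the diagonalization $A\nleq C$ — never refer to a meet \emph{element}. So the first step is to single out, inside any copy of $U$ below $\bm{d}$, three \re degrees playing the roles of $A,B,C$, and to verify that each hypothesis of Theorem~\ref{thm:oo} holds of that copy; the join relations and $A\nleq C$ belong to $U$'s diagram and transfer verbatim. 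For the $L_7$-based USLs the same scheme applies with Theorem~\ref{thm:oo} replaced by the shorter, single-layer-type argument behind the $L_7$ characterization, now targeting only $>\omega$-fickleness.

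The step I expect to be the main obstacle is checking the common-lower-bound hypotheses in the $\Rt$-copy of $U$. Bounding a USL forces the Turing-degree copy to respect $\leq$ and $\cup$ only; unlike a lattice embedding it does \emph{not} forbid spurious common lower bounds, so a priori a copy of $U$ need not satisfy $(\forall\bm{w}\leq A,B)\,[\bm{w}\leq C]$ — exactly the condition the proof of Theorem~\ref{thm:oo} leans on at the double-peeling steps, where the lattice argument wins by building a backup set $Q\leq A,B$ with $Q\nleq C$. I would handle this case by case: where the join relations of $U$ already force the condition in $\Rt$, a direct check suffices; otherwise one reworks the layering so that a spurious witness $\bm{w}\leq A,B$ with $\bm{w}\nleq C$ is itself absorbed as a fresh source of mind-change demands on $\Delta(X)$, keeping the invested layers from being wasted and still driving $\Delta(X)$ past $\omega^{n}$ many mind-changes for every $n$ (symmetrically on the $AC$-side). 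The delicate part of this reworking is the ordinal bookkeeping of Lemma~\ref{lemma:enum}, but it is the same ``layer arithmetic'' already in play in Theorem~\ref{thm:oo}, so I expect it to go through for the finitely many USLs of Figure~\ref{fig:usls}. Promoting this to a proof for \emph{every} USL obtained from \emph{every} finite lattice would require a uniform version of the transfer, and isolating such a principle is, I believe, the central difficulty left open.
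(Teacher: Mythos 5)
This statement is Conjecture~\ref{conj:usl-equals-lattice}, which the paper explicitly leaves open: it proves only the finitely many instances of Figure~\ref{fig:usls} (Theorem~\ref{thm:usls}) and states the general claim as a conjecture. Setting that aside, your proposal rests on a reading of ``bounds $U$'' that is not the one the paper uses, and this inverts which direction is hard. Here, a degree bounds $U$ only if the copy satisfies, besides the order and join requirements, both the \emph{retained} gate conditions (e.g.\ $(\forall W\leq A,B)\,[W\leq C]$) and the \emph{non-meet} conditions asserting that each removed meet genuinely fails to exist in $\Rt$ (Figure~\ref{fig:usl-m3}). Consequently your forward direction is not ``immediate by restriction'': restricting an embedding of $M_3$ yields a triple for which $\bm{a}\cap\bm{b}$ \emph{does} exist in $\Rt$ (it is the image of the bottom element), so the non-meet requirement fails. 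That direction is where essentially all of Section~\ref{sec:usl}'s work lives: given the fickleness that suffices for $L$, one must re-run the embedding construction with the additional $\tau,\tau_\Pi$ requirements, building for each \re $V\leq A,B$ a set $E\leq A,B,C$ with $\Pi(V)\neq E$, which forces the new machinery of lifting and re-targeting uses, passing gates, permitting balls, the fickleness bound of Equation~\eqref{eq:tau-fickleness}, and the verification in Lemmas~\ref{lemma:tau}--\ref{lemma:eta}.

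Conversely, the direction you flag as the main obstacle is the easy one. Because the gate conditions are part of what it means to bound $U$, the hypotheses of Theorem~\ref{thm:oo} hold of any copy of the $M_3$-based USL by definition, and necessity of $\geq\omega^\omega$-fickleness follows verbatim; there is no ``spurious common lower bound'' to absorb, and the reworked layering you sketch is not needed. Note moreover that under your weaker reading (preserve only $\leq$ and $\cup$), the conjecture is simply false: such reducts are the partial lattices of \citet{lempp2006embedding} discussed in Section~\ref{sec:usl}, which embed below every nonzero \re degree, whereas $M_3$ does not. Your closing observation that the fully general statement would need a uniform transfer principle is fair --- that is exactly why it remains a conjecture --- but the case analysis you propose for Figure~\ref{fig:usls} does not engage the actual difficulty, which sits in the construction of the non-meet witnesses.
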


Note that our USLs are different from the \emph{partial lattices} in \citep{lempp2006embedding}, where the authors removed not just the meet but also requirements associated with meets, producing structures that are embeddable below all nonzero \re degrees. Our USLs retain some form of meet, making them more difficult to embed.

The USL in Theorem~\ref{thm:usls} that takes most work is the one enlarged in Figure~\ref{fig:usl-m3}, which is based on $M_3$ (Figure~\ref{fig:N5-131}). In the rest of this section, we prove the theorem for this USL. Details for the easier USLs can be found in \citep{ko2021thesis}.

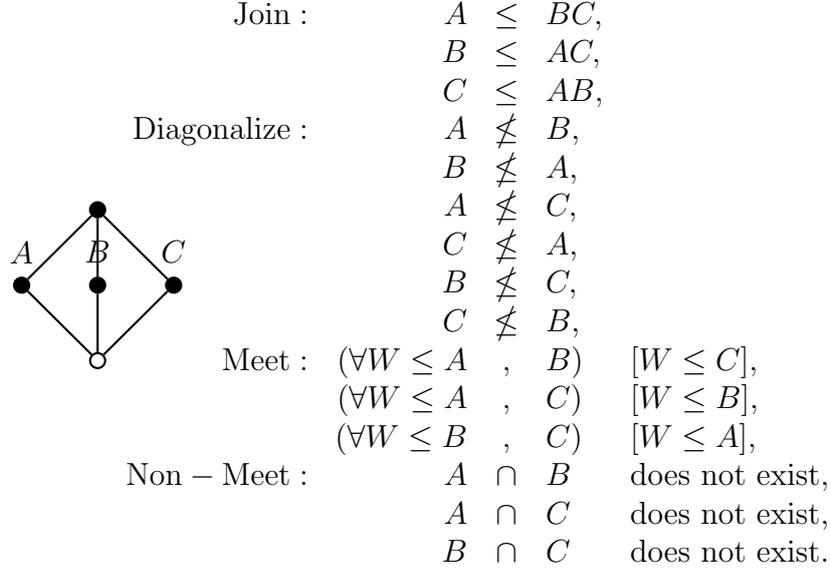
\begin{figure}[tpb]
    \centering
    \tikzset{every picture/.style={thick}}
    \begin{tikzpicture}[every node/.style={circle,fill=black,draw=black,inner sep=2.0pt}]
        \tikzset{131/.pic ={
            \node[fill=white] (d) at (0,-1) {};
            \node[label=above:{$A$}] (l) at (-1,0) {};
            \node (u) at (0,1) {};
            \node[label=above:{$C$}] (r) at (1,0) {};
            \node[label=above:{$B$}] (c) at (0,0) {};
            \draw[thick] (d) --(c) --(u) --(r) --(d) --(l) --(u);
        }}
        \tikzset{pics/eq/.style n args={2}{code ={
            \node[rectangle,draw=none,fill=none] at (0,0) {$\begin{array}{rrcll}
                \mathrm{Join: } &A &\leq &BC,\\
                &B &\leq &AC,\\
                &C &\leq &AB,\\
                \mathrm{Diagonalize: } &A &\nleq &B,\\
                &B &\nleq &A,\\
                &A &\nleq &C,\\
                &C &\nleq &A,\\
                &B &\nleq &C,\\
                &C &\nleq &B,\\
                \mathrm{Meet: } &(\forall W\leq A &, &B) &[W\leq C],\\
                &(\forall W\leq A &, &C) &[W\leq B],\\
                &(\forall W\leq B &, &C) &[W\leq A],\\
                \mathrm{Non-Meet: } &A &\cap &B &\text{does not exist},\\
                &A &\cap &C &\text{does not exist},\\
                &B &\cap &C &\text{does not exist}.
            \end{array}$};
            }}, pics/eq/.default={x}{y}
        }
        \path (0,0) pic{131};
        \path (5,0) pic{eq};
    \end{tikzpicture}
    \caption{This USL is obtained from removing the meet of $M_3$. The removed meet is shown as an open circle, and the requirements for bounding this USL are shown on the right. Like $M_3$, this USL characterizes the \re degrees that contain $\geq\omega^\omega$-fickle sets.}
    \label{fig:usl-m3}
\end{figure}

For the easier direction of the proof, we need to show that any \re degree that bounds the USL must contain a set of fickleness $\geq\omega^\omega$. Consider the requirements involved, shown at the right of Figure~\ref{fig:usl-m3}. The join ($J$) and diagonalization ($\rho$) requirements are exactly the same as the original lattice. As for the meets/gates $\eta$, there are three types:
\begin{align*}
    \eta_{W,C}: (\forall W \leq A,B) [W\leq C],\\
    \eta_{W,B}: (\forall W \leq A,C) [W\leq B],\\
    \eta_{W,A}: (\forall W \leq B,C) [W\leq A].
\end{align*}

We often drop the $W$ subscript. The computations associated with gate $\eta_A$ need to be computable by $A$, so we say that $\eta_A$'s \emph{computing set} is $A$. Similarly, the computing set of $\eta_B$ ($\eta_C$) is $B$ (resp. $C$). These gates are not strictly meets because $A\cap B$, $A\cap C$, and $B\cap C$ do not exist, but we still call them meets/gates because their strategies are the same as the $\eta$-strategies of Section~\ref{sec:sufficient}. Now $J$, $\rho$, and $\eta$ satisfy the conditions in Theorem~\ref{thm:oo}, so $\geq\omega^\omega$-fickleness is necessary to bound the USL, which completes the easier direction of the proof.

For the converse, fix an \re set $X$ whose degree contains a set $\Delta(X)$ of fickleness $\geq\omega^\omega$. We want to construct \re sets $A,B,C$ that satisfy the joins
\begin{align*}
    J_A: &A\leq BC,\\
    J_B: &B\leq AC,\\
    J_C: &C\leq AB,
\end{align*}
the three types of gates $\eta_A$, $\eta_B$, and $\eta_C$, and the six types of $\rho$ requirements. The strategies for $J$ and $\eta$ were described in Section~\ref{sec:sufficient}. The interactions between $J$, $\rho$, and $\eta$ are like in $M_3$, which was shown in \citep{downey2020hierarchy} to produce traces of length $<\omega^\omega$, making $\geq\omega^\omega$-fickleness sufficient for satisfying $\rho$.

Finally, consider the ``\emph{non-meet}'' requirements. There are three types, and they exert the non-existence of $A\cap B$, $A\cap C$ and $B\cap C$ respectively. We discuss only the first type, since the strategies of the others are the same after swapping the roles of $A$, $B$, and $C$. Given an \re set $V<A,B$ via $\Gamma=(\Gamma_A,\Gamma_B)$, we construct \re $E=E_{V\Gamma}$ via functional and $\Lambda=(\Lambda_A,\Lambda_B,\Lambda_C) =(\Lambda_{V\Gamma,A}, \Lambda_{V\Gamma,B},\Lambda_{V\Gamma,C})$, such that for all $\Pi$:
\begin{align*}
    \tau_{V\Gamma}:\; &V =\Gamma_A(A)=\Gamma_B(B) \implies \exists E, \Lambda_A, \Lambda_B, \Lambda_C \text{ such that}\\
    &E =\Lambda_A(A) =\Lambda_B(B) =\Lambda_C(C), \text{ and }\\
    \tau_{V\Gamma\Pi}:\; &\Pi(V) \neq E.
\end{align*}

When context is clear, we drop subscripts $V\Gamma$. We use $\tau$ and $\tau_\Pi$ to refer to the two requirements, and call them parent and child nodes respectively. The hypothesis of the parent says that $V\leq A,B$. If this holds, then $E$ witnesses that $V\neq A\cap B$, since $E$ is a set below $A$ and $B$ that cannot be computed by $V$ regardless of the functional $\Pi$ used for the $V$-computation. Note that we have assumed $V$ to be \renospace; this is enough because the meet of \re degrees, if it exists, must also be \renospace, according to this lemma:
\begin{Lemma}[\cite{lachlan1966lower}]
    If $A$ and $B$ are \re sets, and $U\leq_{\turing} A,B$ (not necessarily \renospace), then there exists an \re set $V\leq_{\turing} A,B$ such that $U\leq V$.
\end{Lemma}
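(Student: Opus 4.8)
This is \citeauthor{lachlan1966lower}'s classical fact that below any two \re sets every common Turing lower bound lies below a common lower bound that is itself \renospace; I would prove it by a direct coding construction.

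Fix Turing functionals $\Phi,\Psi$ with $U=\Phi^{A}=\Psi^{B}$, together with computable enumerations $(A_s)_{s}$ and $(B_s)_{s}$ of $A$ and $B$. For each $x$ and stage $s$, define the \emph{joint value} $J_s(x)$ to be the string $\sigma\in 2^{x}$ with $\Phi^{A_s}(y)\downarrow=\Psi^{B_s}(y)\downarrow=\sigma(y)$ for every $y<x$, if such $\sigma$ exists, and undefined otherwise. Say a \emph{change} occurs at $(x,s)$ when $J_s(x)\downarrow$ and $J_s(x)$ differs from the most recent earlier defined joint value at $x$ — counting, by convention, the first stage at which $J_s(x)\downarrow$ as a change. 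Build the \re set $V$ by enumerating the code $\langle x,n,\sigma\rangle$ whenever the $n$-th change at $x$ occurs and leaves joint value $\sigma$; so the content of $V$ in column $x$ is exactly $\{\langle x,n,\sigma_n\rangle:n<c(x)\}$, where $c(x)$ is the number of changes at $x$ and $\sigma_0,\dots,\sigma_{c(x)-1}$ the resulting sequence of joint values.

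I would first check that $1\le c(x)<\omega$ and $\sigma_{c(x)-1}=U\restriction x$: since $A$ is \renospace, the finitely many uses $\phi(y)$ ($y<x$) of the correct computations $\Phi^{A}(y)=U(y)$ are eventually enumeration-stable, so $\Phi^{A_s}\restriction x=U\restriction x$ for all large $s$, and likewise $\Psi^{B_s}\restriction x=U\restriction x$ for all large $s$; hence $J_s(x)$ is eventually constantly $U\restriction x$, so only finitely many changes occur and the value after the last one is $U\restriction x$. This gives $U\leqt V$: from $V$ compute $c(x)$ as the least $n$ with $\langle x,n,\sigma\rangle\notin V$ for all $\sigma\in 2^{x}$ (only finitely many $\sigma$ to check), and then read off $U\restriction x$ as the unique $\sigma$ with $\langle x,c(x)-1,\sigma\rangle\in V$.

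The crux, and the only step needing real care, is $V\leqt A$ together with its mirror $V\leqt B$ — this is exactly where one uses that $U$ lies below \emph{both} $A$ and $B$. Given oracle $A$ and input $x$, compute the uses $\phi(y)$ for $y<x$, find the least stage $s_1$ by which $A$ has settled below all of them, and note that for $s\ge s_1$ one has $\Phi^{A_s}\restriction x=U\restriction x$, hence $J_s(x)$ is either $U\restriction x$ or undefined. Consequently at most one change occurs at a stage $>s_1$, and if one does, its value is $U\restriction x=\Phi^{A}\restriction x$, which is $A$-computable; the changes and values at stages $\le s_1$ are found by simulating the (entirely oracle-free) construction up to $s_1$. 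This determines $c(x)$ and every $\sigma_n$, hence decides membership in column $x$ of $V$ from $A$; since the column of a given number is computable, $V\leqt A$. The argument for $V\leqt B$ is the same after interchanging $(A,\Phi)$ with $(B,\Psi)$, which is legitimate because the definition of $J_s(x)$ is symmetric in its two sides. The "coincidence" built into $J_s$, together with this symmetry, is precisely what allows one \re set to be simultaneously reducible to each of $A$ and $B$ while still computing $U$; I expect the bookkeeping in this last step to be the main thing to get right, with the rest routine.
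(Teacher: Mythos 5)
The paper offers no proof of this lemma; it is stated verbatim as a citation to Lachlan (1966), so there is nothing internal to compare against. Your construction is the standard proof of that classical fact and is correct: the key points --- that $c(x)$ is finite with last value $U\restriction x$, that $c(x)$ and the $\sigma_n$ are recoverable from $V$, and that with oracle $A$ at most one change can occur after the settling stage $s_1$ and its value must be $\Phi^{A}\restriction x$ (so $A$ decides whether that last change happens by comparing $\sigma_{k-1}$ with $U\restriction x$) --- all check out. The only cosmetic point is that $\Phi^{A_s}(y)$ should be read as the stage-$s$, step-bounded approximation $\Phi^{A_s}_s(y)$ so that the construction of $V$ is effective and so that $s_1$ must also majorize the running times of the true computations; your phrase ``simulating the (entirely oracle-free) construction'' indicates you intend exactly this.
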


Note that $\tau$ and its countably many children $\tau_\Pi$ also exert the non-existence of $A\cap B\cap C$, since any set below $A$ and $B$ is also below $A,B,C$, and the \re set $E$ constructed lies not just below $A$ and $B$, but also below $C$.

Fix a computable ordering of the requirements so that child nodes are always of lower priority than their parent. We can ignore a child if the hypothesis $V=\Gamma_A(A)=\Gamma_B(B)$ of its parent does not hold. We use the usual notion of $\tau$-expansionary stages to guess the outcome of a parent's hypothesis. Specifically, define the \emph{length} of $\tau$ at stage $s$ as
\begin{align*}
    \length(\tau)[s] &:=\max_{v\leq s}\; \left\{V\restriction v =\Gamma_A\restriction v =\Gamma_B\restriction v[s]\right\},
\end{align*}
and we guess that $\tau$ has outcome $\infty$ at stage $s$ if $s$ is an \emph{$\tau$-expansionary} stage, which is when
\begin{align*}
    \length(\tau)[s] &>\max_{s'<s}\{\length(\tau)[s']: s' \mathrm{ is a } \tau\mathrm{-stage}\}.
\end{align*}

Otherwise, we guess that the outcome of $\tau$ is $\fin$. By $\tau$-expansionary stages, we must increase our definitions of $E$, $\Lambda_A$, $\Lambda_B$, and $\Lambda_C$, so that if $\tau$'s true outcome is $\infty$, then $E=\Lambda_A(A)=\Lambda_B(B)=\Lambda_C(C)$:
\begin{framed}
    \noindent\textbf{$\tau=\tau_{V\Gamma}$-strategy}: \emph{Initialize} $\tau$ by setting $\Lambda_A=\Lambda_B=\Lambda_C=\emptyset$, $x=0$.
    \begin{enumerate}
        \item \emph{Maintain}: Wait for a $\tau$-expansionary stage. For each $e<x$, if $\lambda_A(e)$ or $\lambda_B(e)$ or $\lambda_C(e)$ is not defined, pick a large value for these uses, which are targeted for $A$, $B$, and $C$ respectively. Return to the beginning of this maintenance phase for $e+1$.
    \end{enumerate}
\end{framed}

We refer to $\lambda_A(e)$, $\lambda_B(e)$, and $\lambda_C(e)$ as the $A$, $B$, and $C$-uses of $e$ respectively. Each of these uses may be lifted only finitely often. Now consider the $\tau_\Pi$-strategy, which is based on the usual Friedberg Muchnik one, and as we shall see, is a finite injury requirement that inflicts both positive and negative injury. If $\tau_\Pi \succeq\tau^\frown\fin$, we can ignore $\tau_\Pi$. Henceforth, assume $\tau_\Pi \succeq\tau^\frown\infty$. Pick a large follower $e$ targeted for $E$, and (re)assign $e$ large $A$, $B$, and $C$-uses by $\tau$-expansionary stages. Note that these stages are not necessarily $\tau_\Pi$-stages, since $\tau^\frown\infty\preceq\tau_\Pi$. Therefore, we need to be mindful of the interactions between $\tau_\Pi$ and $\eta$ later, if $\tau\prec\eta\prec\tau_\Pi$, for $\tau_\Pi$'s uses may not be picked at $\eta$-expansionary stages, making the uses small enough to injure some of $\eta$'s earlier subrequirements. Next, we wait for $e$ to be \emph{realized} ($\Pi(e)=0$). We would like to diagonalize at $e$ by enumerating $e$ with its uses simultaneously. But to avoid un-realizing $e$, we need:
\begin{align*}
    \lambda_A(e) >\gamma_A(\restriction \pi(e)+1),\\
    \mathrm{or }\;\;\; \lambda_B(e) >\gamma_B(\restriction \pi(e)+1).
\end{align*}

In other words, we want to first lift the $A$- or $B$-use above its respective $\gamma$-use. Without loss of generality, we lift the $A$-use. To prevent the $B$-side of the computation from being lost, which could cause $e$ to be un-realized, we impose a restraint of $\gamma_B(\pi(e)+1)$ on $B$ prior to lifting. After lifting, by the next $\tau$-expansionary stage, the computation on the $\Gamma_A$-side would have returned, and we would pick a new large value for the $A$-use $\lambda_A(e)$, to exceed the $\gamma_A$-use. We can now change our restraint to the $A$-side by cancelling our restraint on $B$ and preventing elements below the raised $\gamma_A(\pi(e)+1)$ from entering $A$. Finally, we wait for $X$ to grant permission to enumerate $e$ with its new uses simultaneously. The $B$-enumeration may injure the $\gamma_B$-side of the computation, but $e$ will not become un-realized, because we are holding the $\gamma_A$-side of the computation, and if the $\gamma_B$-side does not return to equal the $\gamma_A$-side, then $\tau$ wins finitely and vacuously and we can ignore $\tau_\Pi$. Notice that each $e$ sets restraints and enumerates uses no more than twice. Therefore, $\tau_\Pi$ is a finite injury requirement, allowing it to interact nicely with other finitary requirements such as $\rho$.

What about interactions with infinitary requirements such as gates? First consider the easier case where a gate $\eta$ lies between $\tau$ and $\tau_\Pi$; for instance, if:
\[\tau^\frown\infty \preceq \eta_A^\frown\infty \preceq \tau_\Pi.\]
Recall that the strategy of a subrequirement $\eta_A,m$ involves enumerating an element $\leq\theta(m)$ into $A$, if ``small'' elements are enumerated into $B$ and $C$ simultaneously, which injured the $B$ and $C$-sides of the subrequirement. Now simultaneous enumeration occurs only at the end of the $\tau_\Pi$-strategy, when $e$ enumerates its uses. We need to ensure that if the $B$ and $C$-uses are small, then $\theta(m)$ can be picked to exceed the $A$-use. There are two cases. In the easier case, $\eta_A,m$ is set up before $e$. Then at least one of $e$'s $B$ and $C$-uses will be too large to injure the computations of $\eta_A,m$. In the harder case, the subrequirement is set up later. But because we only tend to the subrequirement at $\eta_A$-expansionary stages, which are also $\tau$-expansionary stages where $e$'s uses exist, $\eta_A,m$ will always know the latest values of those uses, and in particular, update $\theta(m)$ to exceed the latest $A$-use. Then the enumeration of the final $A$-use will lift $\theta(m)$, as desired. Likewise, if $\eta$ works for $\eta_B$ ($\eta_C$), then because $\tau\prec\eta$, $\eta$ can keep track of the latest $B$-use ($C$-use) of $e$, and delay believing computations until that use and $e$ have been enumerated.

The situation is trickier when $\tau_\Pi$ interacts with infinitary gates $\eta^\frown\infty \preceq\tau$ of higher priority than $\tau$; for instance, if:
\[\eta_A^\frown\infty \preceq \tau^\frown\infty \preceq\tau_\Pi.\]
Recall in our sketch of the $\tau_\Pi$-strategy that $e$ lifted its $A$-use once. A non-trivial situation occurs when a subrequirement $\eta_A,m_A$ is set up after $e$'s $A$-use is enumerated, but before stage $s_0$, the next $\tau$-expansionary stage when the new use is picked. $e$'s $B$ and $C$-uses may be small enough to injure both sides of the subrequirement, so we want $\theta(m_A)$ to exceed $e$'s $A$-use, should $e$'s current $B$ and $C$-uses be the final ones to be enumerated together later. But $\theta(m_A)$ may not be large enough, because it must be picked by the next $\eta_A$-expansionary stage, which generally comes before stage $s_0$. Therefore one of $e$'s $B$- or $C$-uses must be lifted after stage $s_0$, to avoid injuring $\eta_A,m_A$ at both sides.

Without loss of generality, suppose we lift the $B$-use at some stage $s_1>s_0$. What happens if there is an $\eta_B$-gate of higher priority than $\eta_A$:
\[\eta_B^\frown\infty \preceq \eta_A^\frown\infty \preceq \tau^\frown\infty \preceq\tau_\Pi.\]
A subrequirement $\eta_B,m_B$ might be set up after $e$'s $B$-use is enumerated, but before stage $s_1$, when the new $B$-use is picked. Like above, $e$'s $A$ and $C$-uses might be small enough to injure both sides of $\eta_B,m_B$, so we either want to lift $\theta(m_B)$ beyond $e$'s $B$-use, or lift one of $e$'s $A$- or $C$-uses. The former is not possible because $\eta_B\prec\tau$, so we are forced to lift $e$'s use again.

Suppose we fix the convention of not lifting $e$'s $A$-use again, choosing instead to lift the $B$ or $C$-uses when needed. In particular, we lift the $C$-use at some stage $s_2>s_1$. What happens if there is an $\eta_C$-gate of higher priority than $\eta_B$:
\[\eta_C^\frown\infty \preceq \eta_B^\frown\infty \preceq \eta_A^\frown\infty \preceq \tau^\frown\infty \preceq\tau_\Pi.\]
Repeating earlier argument, a subrequirement of $\eta_C$ that is set up after $e$'s $C$-use is enumerated, but before stage $s_2$, will call for $e$'s $B$ or $C$-use to be lifted after stage $s_2$. Even if we chose to lift $e$'s $A$-use instead of its $C$-use at stage $s_2$, we cannot avoid needing to lift one of $e$'s uses again after stage $s_2$, if there is a gate which works for $B\cap C\leq A$ of higher priority than $\eta_B$.

More generally, assume there are $n$ many gates of higher priority than $\tau$:
\[\eta_1^\frown\infty \preceq\eta_1^\frown\infty \preceq \ldots \preceq\eta_n^\frown\infty \preceq\tau.\]
Following the sketch above, to prevent $e$ from being un-realized, we lift its $A$-use once and set a restraint on $A$. Then by the convention of not lifting the $A$-use again, we alternate between lifting the $B$ and $C$-uses to appease the gates $\preceq\tau$. After $k$ many lifts, we would have appeased the $k$ lowest priority gates
\[\eta_{n-k+1} \prec\ldots \prec\eta_n,\]
and we say that $e$ has \emph{passed} those gates. $e$ passes one more higher priority gate with every $B$ or $C$ lift, which means that no more $n\leq|\tau|$ many $B$ and $C$ lifts will allow $e$ to pass all gates.

How do we ensure that a passed gate never gets into trouble with $e$ again? Revisit the earlier situation:
\[\eta_C^\frown\infty \preceq \eta_B^\frown\infty \preceq \eta_A^\frown\infty \preceq \tau^\frown\infty \preceq\tau_\Pi.\]
Assume $e$ passed $\eta_A$ then $\eta_B$, and has just enumerated its $B$-use, say at stage $s_0$, in an effort to pass $\eta_C$ also. When should $e$ pick its new use, in order to work with all subrequirements $\eta_B,m$ of $\eta_B$, regardless of how late that subrequirement is set up? By earlier argument, we cannot pick the new use before the next $\eta_C$-expansionary stage, if we want $e$ to pass $\eta_C$ later. Yet, we cannot wait beyond the next $\tau$-expansionary stage, according to the $\tau$-strategy. If $\eta_B,m$ is set up after stage $s_0$, it would notice $e$'s relatively small $A$ and $C$-uses, which will injure both sides of the subrequirement's computations if the uses are enumerated simultaneously later. To foresee this enumeration, $\eta_B,m$ will need to set $\theta(m)$ above the new $B$-use. Therefore, the new use must be picked by the next $\eta_B$-expansionary stage, not just by the next $\tau$-expansionary stage.

Generally, if $e$ has already passed a gate $\eta$, and has just enumerated one of its uses in an effort to pass a higher priority gate $\eta'\preceq\eta$, then the new use must be picked by the next $\eta$-expansionary stage, but not before the next $\eta'$-expansionary stage. To these ends, we shall do the picking at the next $\eta'$-expansionary stage.

Now consider the effect of $J$ on $\tau_\Pi$. Whenever $\tau_\Pi$ is waiting for an event to occur, its uses, if defined, need to extend traces because of $J$. We need to ensure that we can always choose targets for the extensions so as not to un-realize $e$, or to undo the effect from lifting uses. Consider the extension of the first $A$-use. Between knowing the $B$-restraint and changing the restraint to an $A$-restraint, the $A$-use and its traces will be enumerated. Until we know which $B$-values to avoid, the $A$-use's traces cannot be targeted for $B$. By $J_A$ and $J_C$, the $A$-use can only grow an $ACAC$-trace in the meantime. After this trace is enumerated, the $B$-restraint is dropped. Then a permanent $A$-restraint is imposed, and a new $A$-use which exceeds the restraint is picked. Since the $B$- and $C$-uses may be enumerated later, before knowing the $A$-values to avoid, the traces extending the $B$- or $C$-uses cannot be targeted for $A$, which forces us to extend them to $BCBC$- and $CBCB$-traces respectively.

Summarizing, before we alternate between lifting $B$- and $C$-uses, the traces associated with these uses involve small $B$- and $C$-elements. Wlog, suppose we wish to lift the $B$-use next to appease a gate with $B$ on one side. If we jump directly into lifting, small $B$-elements will still be present in the $C$-use's $CBCB$-trace, which when enumerated later, will undo the effect of the lifting. Therefore, prior to lifting, we change the traces of the $C$-use to a $CACA$-one to avoid small $B$-elements. We enumerate the $C$-use's trace except the use itself, then re-target with $A$ and $C$-balls. We shall refer to this process as the \emph{re-targeting of $B$'s extension}. Similarly, to appease gates that want the $C$-use to be lifted, we cannot allow the $B$-use's trace to contain small $C$-elements, and therefore \emph{re-target $B$'s extension} to a $BABA$-one prior to lifting.

What about the traces of $e$'s $A$-use? They also cannot be contain small $B$ or $C$-values. After re-targeting the $B$- and $C$-extensions, we want to lift the $B$-use next, so beforehand we need to change $A$'s trace to one that avoids $B$. But, we also know that after $B$ is lifted, we want to lift $C$ and will need the $A$-use to avoid extending to $C$-traces that are smaller than the $C$-elements of the $C$-use. For these reasons, we re-target $A$'s extension to the trace associated with the $C$-use itself. In other words, if currently the $A$-use's trace is $ax_0x_1\ldots$, and the $C$-use's trace is $ca_0c_0\ldots$, then we change $A$'s trace to $aca_0c_0\ldots$ by enumerating $x_0x_1\ldots$, and then re-targeting $a$ to $c$ just after $x_0$ is enumerated. This way, after the $B$-use is lifted, there will be no small $B$-elements in the traces of the $A$- or $C$-uses to undo the effect of the lifting. When the new $B$-use is picked, we extend it to a $BABA$-trace to avoid $C$. Similarly, after the $C$-use is lifted, we re-target $A$'s extension to the trace of the $B$-use, and after the new $C$-use is picked we extend it to a $CACA$-trace.

Now we bound the fickleness for satisfying $\tau_\Pi$. After $e$ is realized, we lift the $A$-use and re-target the $B$-, $C$-, then $A$-extensions. Each lifting or re-targeting requests $\leq\omega^{|\tau_\Pi|}$ many fickleness, if we follow the trace-extension and partitioning rules of Section~\ref{sec:necessary}. After re-targeting extensions, we alternate between lifting the $B$- and $C$-uses, lifting $\leq|\tau|$ many times. Finally, we enumerate $e$ with its uses, requesting one more fickleness. Thus a sufficient amount of fickleness for $\tau_\Pi$ is:
\begin{align} \label{eq:tau-fickleness}
    \omega^{|\tau_\Pi|}\cdot (4+|\tau|) +1 <\omega^{|\tau_\Pi|+1} <\omega^\omega.
\end{align}

Now a single follower may not be granted enough permissions by $\Delta(X)$. So we keep picking new followers to eventually succeed on one of them, threatening to approximate $\Delta(X)$ by a $<\omega^{\omega}$-fickle function $f$ with mind-change function $o$ if no follower succeeds. Letting $e_i$ denote the $i$-th follower of $\tau_\Pi$, $o(i)[s]$ shall bound the total length of the traces belonging to any of $e_0,\ldots,e_i$ that has yet to be enumerated by stage $s$. Function $o(i)[-]$ will then be non-increasing, decreasing strictly when some follower $e\leq e_i$ enumerates elements. By the earlier argument, each follower has traces of length $\leq\omega^{|\tau_\Pi|+1}$; therefore, $o(i)[0]\leq\omega^{|\tau_\Pi|+1}\cdot i$, making $f$ a $\leq\omega^{|\tau_\Pi|+2}$-fickle function. Function $f(i)[s]$ shall represent the guess of $\Delta(X,i)$ at stage $s$ and is, therefore, a $\leq\omega^{|\tau_\Pi|+2}$-fickle function via $o$.

Ideally, $e_i$ should receive permission only from $\Delta(X,i)$, but a complication arises when permission is received --- we need to cancel lower priority followers $e_{i'}>e_i$ to avoid undesirable interactions with gates $\eta,m$: $e_i$ might receive permission to act and injure one side of the gate; then $e_{i'}$ might receive permission and injure the other side, resulting in $\theta(m)$ being lifted and un-realizing $e_i$. But $e_{i'}$ might have been the follower to receive sufficient permissions. To prevent these permissions from being lost after $e_{i'}$'s cancellation, we adopt \citep{downey2007totally}'s strategy of letting $e_i$ inherit the fickleness meant for $e_{i'}$. Then since $\Delta(i')$ was fickle enough for $e_0,\ldots,e_{i'}$, $\Delta(i')$ will be fickle enough for $e_i$.

The above example illustrates another complication from multiple followers: Without knowing which follower to wait for, gates cannot foresee if both of its sides will be simultaneously injured. Therein lies the purpose a gate's computing sets --- whenever a trace $t$ is waiting to be enumerated, we put $t$ into a \emph{permitting} bin, and \emph{assign} $t$ \emph{permitting ball(s)}, which are element(s) targeted for the computing set(s) of gates, so that gates can use their computing set(s) to foresee the enumeration, and delay believing computations until those traces are gone. Suppose a trace $t$ is placed in the permitting bin, and $t$ contains elements targeted for $A$ and $B$ only. Since there are three types of gates, and together their computing sets include not just $A$ and $B$ but also $C$, we need to assign $t$ a $C$-permitting ball, which will given the same permitting code as $t$ to be enumerated with the trace. When some $\eta_C,m$ notices $t$ in the bin, $\eta_C$ will set $\theta(m)$ to exceed the $C$-ball, so as to $C$-computably know if $t$ will be enumerated later. Likewise, $\eta_A$ can use the $A$-elements in $t$ to $A$-computably know if $t$ will be enumerated, and similarly for $\eta_B$.

We can now provide the $\tau_\Pi$-strategy for the non-existence of $A\cap B$. For the other types of non-meets, swap the roles of $A$, $B$, and $C$ in the description below.
\begin{framed}
    \noindent\textbf{$\tau_\Pi$-strategy}: If $\tau_\Pi\succeq \tau^\frown\fin$, do nothing. Otherwise, \emph{initialize} $\tau_\Pi$ by cancelling all followers and restraints, if any, that have been set up by $\tau_\Pi$.
    \begin{enumerate}
        \item \emph{Set up}: Let $e$ be the largest follower that has been set up by $\tau$ and that has not been assigned to any other child-requirement. $e$'s $A$, $B$, and $C$-uses $\lambda_A(e)$, $\lambda_B(e)$, and $\lambda_C(e)$ would have just been assigned. Extend the traces of these uses, following the discussed rule that the traces of the $A$-use avoid $B$ and the traces of the $B$ and $C$-uses avoid $A$. Set the codes of all traces to $\delta\restriction(k+1)$, where $k$ is the number of followers of $\tau_\Pi$ so far.
        
        \item \emph{Realization}: Wait for $e$ to be realized at a $\tau_\Pi$-stage. Set a restraint on the $B$-side to stop $e$ from being un-realized.
        
        \item \emph{Lift $A$-use}: Enumerate the $A$-use's trace. If enough permissions are received that the $A$-use becomes enumerated, then at the next $\tau$-expansionary stage, the injured computation $V\restriction (\pi(e)+1) =\Gamma_A\restriction (\pi(e)+1)$ will have returned, and $\tau$ will pick a new large $A$-use $\lambda_A(e) >\gamma_A\restriction(\pi(e)+1)$. Set a restraint on the $A$-side to stop $e$ from being un-realized and cancel the earlier $B$-restraint. The $B$- and $C$-uses of $e$ can now grow traces targeted for $A$ if they wish.
        
        \item \emph{Re-target $B$-extension}: Let $bc_0\ldots$ be the current trace of the $B$-use. Enumerate this trace up to and including $c_0$. If enough permissions are granted that $c_0$ becomes enumerated, extend the trace of $b$ by a $BABA$-one.
        
        \item \emph{Re-target $C$-extension}: Let $cb_0\ldots$ be the current trace of the $C$-use. Enumerate this trace up to and including $b_0$. If enough permissions are granted that $b_0$ becomes enumerated, extend the trace of $c$ by a $CACA$-one.
        
        \item \emph{Re-target $A$-extension}: Let $ax\ldots$ be the current trace of the $A$-use. Enumerate this trace up to and including $x$. If enough permissions are granted that $x$ becomes enumerated, let the trace of $a$ extend by the $B$-use's $BABA$-trace so that the $A$-trace is $\lambda_A(e)\lambda_B(e)\ldots$. As discussed before, we consider $e$ not to have passed any infinitary gate of higher priority than $\tau$.
        
        \item \emph{Pass one more gate $\prec\tau$}: Let $\eta^\frown\infty \preceq\tau$ be the lowest priority gate that $e$ has not yet passed. If $\eta$ does not exist, then $e$ has passed all gates and we can go directly to the next diagonalization phase. Otherwise, let $Z\in\{B,C\}$ be any set that appears on one side of $\eta$. We want to lift the $Z$-use. Enumerate the $Z$-trace of $e$. If enough permissions are granted that the $Z$-use is enumerated, pick the new large $Z$-use at the next $\eta$-expansionary stage. If the trace extending $e$'s $A$-use is the previous $Z$-use, then after that previous use was enumerated, re-target $A$'s extension to the other use of $e$. Consider $e$ to have passed gate $\eta$, and return to the beginning of this gate-passing phase to pass the next lowest priority infinitary gate.
        
        \item \emph{Diagonalize}: Put $e$ and the traces of its uses into the permitting bin, assigning permitting ball(s) if necessary. Wait for a single $X$-permission to enumerate all elements simultaneously. Cancel all other followers and declare $\tau_\Pi$ satisfied by $e$.
    \end{enumerate}
\end{framed}

Apart from the rules above, we keep the usual conventions in priority constructions of cancelling lower priority followers whenever a follower $e$ receives attention, and of waiting for the next $\tau_\Pi$-stage before allowing $e$ to receive attention again and letting $\tau_\Pi$ pick a new follower. Also, if $e$ received permission to enumerate, we let $e$ inherit the permissions of all cancelled $e'>e$. Putting all strategies together, the overall construction is:
\begin{framed} \label{pg:construction}
    \noindent \textbf{Construction.} \emph{Stage 0.} Initialize all requirements.
    
    \emph{Stage $s+1$}: Let $\lambda_s\in\Lambda$ be the node of length $s$ representing the outcomes of the first $s$ requirements, where we set up a node $\lambda\preceq\lambda_s$ along the way if $\lambda$ has not been been set up. Let $\lambda\preceq\lambda_s$ or $\lambda<_L\lambda_s$ be the highest priority positive node that wants to act. Let $\lambda$ act and initialize all nodes of lower priority than $\lambda$.
\end{framed}

To prove that all requirements are satisfied, let $g\in [\Lambda]$ be the true path on the tree of construction $\Lambda$. We prove by induction on $n$, the length of the node along the true path, that the requirement represented by node $\lambda=g(n)$ is satisfied. In the following lemmas, we can assume from induction that we are working in stages after nodes to the left of $\lambda$ are never visited again, and after all $\rho\preceq\lambda$ have stopped inflicting injury. Then $\lambda$ will never be cancelled and will always have highest priority to act.

\begin{Lemma} \label{lemma:tau}
    If $\lambda=\tau$, then $\tau$ is satisfied and inflicts no injury.
\end{Lemma}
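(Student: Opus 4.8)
The plan is to treat separately the (immediate) no-injury claim and the (substantive) satisfaction claim, and for the latter to split on whether the hypothesis $V=\Gamma_A(A)=\Gamma_B(B)$ of $\tau_{V\Gamma}$ holds.

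\emph{No injury, and the vacuous case.} Reading the $\tau$-strategy, the only thing $\tau$ ever does is, at a $\tau$-expansionary stage, assign fresh (hence large) values to the currently undefined uses among $\lambda_A(e),\lambda_B(e),\lambda_C(e)$ and declare $\Lambda_A(A;e)=\Lambda_B(B;e)=\Lambda_C(C;e)=E(e)$ with those uses; it never enumerates into $A$, $B$, $C$, or $E$ and it sets no restraint, so it disturbs no earlier computation and inflicts no injury. If moreover $V\neq\Gamma_A(A)$ or $V\neq\Gamma_B(B)$, then $\length(\tau)[s]$ is bounded by the least point of disagreement, so there are only finitely many $\tau$-expansionary stages; hence $g(n+1)=\tau^\frown\fin$ and $\tau_{V\Gamma}$ is satisfied vacuously.

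\emph{The main case.} Assume now $V=\Gamma_A(A)=\Gamma_B(B)$, so $\length(\tau)\to\infty$, there are infinitely many $\tau$-expansionary stages, and $g(n+1)=\tau^\frown\infty$. It suffices to show that for every $e$ the uses $\lambda_A(e),\lambda_B(e),\lambda_C(e)$ reach final values: at the last $\tau$-expansionary stage at which they are (re)picked we declare $\Lambda_Z(Z;e)=E(e)$ for each $Z\in\{A,B,C\}$, and $E(e)$ is by then settled, since the only way $e$ enters $E$ is the \emph{Diagonalize} step of $e$'s child, which simultaneously enumerates $\lambda_A(e),\lambda_B(e),\lambda_C(e)$ and hence would force a strictly later re-pick. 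So these computations are permanent and $E=\Lambda_A(A)=\Lambda_B(B)=\Lambda_C(C)$, witnessing $\tau_{V\Gamma}$. I would prove that each $\lambda_Z(e)$ stabilizes by induction on $e$, working past the stage $s^*$ after which no node left of $\tau$ is visited and all $\rho\preceq\tau$ have stopped injuring, and past the stage after which $\lambda_Z(e')$ is fixed for every $e'<e$. Once $\lambda_Z(e)$ is picked large at such a stage $t$, the only elements $\le\lambda_Z(e)$ that can later enter $Z$ are: $\lambda_Z(e)$ itself enumerated by $e$'s child $\tau_\Pi$ (by the assignment convention $e$ is worked by at most one child, and reading off its phases that child enumerates $\lambda_A(e)$ at most twice --- in \emph{Lift $A$-use} and \emph{Diagonalize} --- and each of $\lambda_B(e),\lambda_C(e)$ at most $|\tau|+1$ times, namely at most once per higher-priority gate passed plus \emph{Diagonalize}, the re-targeting phases enumerating only trace extensions and not the uses); a use $\theta(m)$ of a higher-priority on-path gate with computing set $Z$ picked before $t$ (finitely many such $m$, and each $\theta(m)$ lifted only finitely often since $\eta^\frown\infty\preceq\tau$ forces $\eta$'s hypothesis, whence by the outer induction hypothesis $\eta$ is satisfied and its functional total); or a fresh element of a lower-priority node, which is $>\lambda_Z(e)$. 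Hence $Z\restriction\lambda_Z(e)$ changes only finitely often after $t$, so $\lambda_Z(e)$ is re-picked only finitely often.

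The substance is the second item above: controlling the coupling between $\tau$'s functional uses and the use-lifting performed by $e$'s child $\tau_\Pi$ and by the higher-priority gates. The boundedness of the child's lifts rests on the structural facts that the gate-passing loop in the $\tau_\Pi$-strategy runs at most $|\tau|$ times and that the re-targeting phases never enumerate the uses themselves, while the extra functional $\Lambda_C$ --- absent from a bare minimal-pair gadget --- is precisely what forces us to track $C$-sided gates as well, though $\Lambda_C$ is handled symmetrically to $\Lambda_B$. Everything else is the standard tree-argument bookkeeping supplied by the overall induction.
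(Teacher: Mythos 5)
Your proposal is correct and follows essentially the same route as the paper's proof: $\tau$ acts only by assigning fresh uses (hence no injury), the $\fin$ outcome is vacuous, and in the $\infty$ case satisfaction reduces to the facts that each follower is worked by at most one child, that the child lifts each use only finitely often, and that the \emph{Diagonalize} step enumerates $e$ together with all three of its uses so the $\Lambda$-computations are corrected. The paper states these points tersely while you supply the explicit bounds on the child's lifts and the accounting of other small $Z$-changes, but the underlying argument is the same.
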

\begin{proof}
    $\tau$ never sets restraints or enumerates elements, and so cannot inflict injury. If the outcome of $\tau$ is $\fin$, then $\tau$ is vacuously satisfied. So, assume there are infinitely many $\tau$-expansionary stages. Each follower $e$ of $\tau$ is assigned to at most one child node $\tau_\Pi$, which by the $\tau_\Pi$-strategy acts only finitely often until cancellation. Uses $\lambda_A(e)$, $\lambda_B(e)$, and $\lambda_C(e)$ always exist at $\tau$-expansionary stages, and when $e$ is enumerated, those uses are also enumerated. Therefore, the uses are lifted finitely often, and $E(e)=\Lambda_A(e)=\Lambda_B(e)=\Lambda_C(e)$.
\end{proof}

\begin{Lemma} \label{lemma:tau-pi}
    If $\lambda=\tau_\Pi$, then $\tau_\Pi$ is satisfied and inflicts finite injury.
\end{Lemma}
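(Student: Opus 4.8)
\emph{Plan.} I would first dispose of the trivial cases. If $\lambda=\tau_\Pi$ lies below $\tau^\frown\fin$, the strategy does nothing, so $\tau_\Pi$ is vacuously satisfied and inflicts no injury; so assume $\lambda\succeq\tau^\frown\infty$. Since $\lambda$ lies on the true path, $\tau$ has true outcome $\infty$, hence the hypothesis $V=\Gamma_A(A)=\Gamma_B(B)$ holds and, by Lemma~\ref{lemma:tau}, $E=\Lambda_A(A)=\Lambda_B(B)=\Lambda_C(C)$; so it suffices to produce a follower $e$ of $\tau_\Pi$ that is permanently diagonalized, i.e.\ with $E(e)=1$ and $\Pi(V,e)=0$ forever, while showing that only finitely many followers ever act, each finitely often.

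Next I would show that a single follower $e=e_i$, once appointed, acts finitely often before it is cancelled or declared to satisfy $\tau_\Pi$, by walking through the eight phases of the $\tau_\Pi$-strategy: the realization, the single $A$-use lift (whose accompanying restraints, first on the $B$- then on the $A$-side, prevent un-realization, as in the sketch before the strategy box), and the three extension re-targetings are one-time events; the gate-passing phase repeats at most $n\leq|\tau|$ times, with one $B$- or $C$-use lift per repetition; and the diagonalization is a single simultaneous enumeration. Every use-lift or re-target enumerates a trace, and by the trace-extension and partitioning discipline of Sections~\ref{sec:sufficient} and~\ref{sec:necessary} each trace is cleared after finitely many ball-enumerations (the ordinal bound $<\omega^{|\tau_\Pi|+1}$ of Equation~\eqref{eq:tau-fickleness} is precisely the well-foundedness of this process). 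I would then check the gate bookkeeping that makes the diagonalization permanent: once $e$ has passed a gate $\eta\preceq\tau$ — always choosing its new use at the next $\eta'$-expansionary stage as prescribed — no later enumeration of $e$'s uses injures both sides of any subrequirement $\eta,m$ without $\theta(m)$ having been picked above the offending use, so $\theta(m)$ is lifted together with it and $e$ is never un-realized through $\eta$; the remaining gates, and the multiple-follower subtleties, are handled by the computing-set machinery and the permitting bins as sketched. Thus once $e$ reaches the Diagonalize phase and receives its single $X$-permission, the computation $\Pi(V,e)=0$ is fixed (protected by the standing $A$- or $B$-restraint together with the computing sets), and enumerating $e$ into $E$ gives $E(e)=1\neq 0=\Pi(V,e)$.

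It then remains to show that some follower does reach the end of the Diagonalize phase; this also yields finite injury, since at that moment all other followers are cancelled and $\tau_\Pi$ is permanently satisfied, so only finitely many followers ever acted — each finitely often, by the budget bound. Suppose toward a contradiction that no follower succeeds. Following the approximation-threat technique of \citep{downey2007totally}, I would let $f(i)[s]$ be the running copy of $\Delta(X,i)$ and let $o(i)[s]$ bound the total remaining trace length of $e_0,\dots,e_i$ at stage $s$; then $o(i)[-]$ is non-increasing and strictly decreases whenever some $e\leq e_i$ enumerates, so $\langle f(i)[s],o(i)[s]\rangle_s$ is a computable approximation with mind-change function, with $o(i)[0]\leq\omega^{|\tau_\Pi|+1}\cdot i$ by Equation~\eqref{eq:tau-fickleness}, making $\langle f(i)\rangle_i$ a canonical $\leq\omega^{|\tau_\Pi|+2}$-c.a.\ in the sense of Lemma~\ref{lemma:enum}. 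Because no $e_i$ succeeds, each $e_i$ is stuck once $\Delta(X,i)$ stops changing, so $\lim_s f(i)[s]=\Delta(X,i)$, whence $\Delta(X)$ would be $\leq\omega^{|\tau_\Pi|+2}$-fickle; but $\omega^{|\tau_\Pi|+2}<\omega^\omega$, contradicting that $\Delta(X)$ is $\geq\omega^\omega$-fickle. (The inheritance of a cancelled follower's budget, following \citep{downey2007totally}, is what keeps $o(i)[-]$ well behaved.)

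\emph{Main obstacle.} I expect the hard part to be the joint bookkeeping of a follower's three uses against the joins $J_A,J_B,J_C$ and against the infinitary gates $\eta\preceq\tau$ of higher priority than $\tau$. One must verify that the prescribed re-targetings — growing the first $A$-use to an $ACAC$-trace, the fresh $B$- and $C$-uses to $BCBC$- and $CBCB$-traces, then to $BABA$/$CACA$-traces and splicing $A$'s trace onto the other use's trace before each alternating lift — really do remove every small $B$- or $C$-element that could later undo a use-lift, while the whole process still terminates; and that the rule ``pick the new use at the next $\eta'$-expansionary stage'' stays consistent with $\tau$'s own requirement that uses exist at $\tau$-expansionary stages. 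The cancellation-and-inheritance accounting that makes the threatened approximation $\langle f,o\rangle$ legitimate is the other delicate point; both are where the eight-phase strategy and the permitting machinery must mesh.
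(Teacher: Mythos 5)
Your proposal is correct and follows essentially the same route as the paper's proof: restraints keep realized followers realized, each follower's requests are bounded by $\omega^{|\tau_\Pi|+1}$ as in Equation~\eqref{eq:tau-fickleness}, failure of every follower would yield a $\leq\omega^{|\tau_\Pi|+2}$-fickle approximation of $\Delta(X)$ contradicting its $\geq\omega^\omega$-fickleness, and injury is finite because the ordinal budget still corresponds to only finitely many actual enumerations. The one case the paper splits off that you elide is a follower that is never realized --- such a follower consumes no permissions, so the threatened approximation $\langle f,o\rangle$ would not track $\Delta(X)$ there, but then $\Pi(V,e)\neq 0=E(e)$ and $\tau_\Pi$ is satisfied trivially; you should state this before running the contradiction argument.
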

\begin{proof}
    Wait for $\tau_\Pi$ to stop being initialized. Followers that are realized never become un-realized because of the restraints. If some follower was never realized, the lemma holds trivially. So assume all followers eventually become realized. Each follower $e$ requests $\leq\omega^{|\tau_\Pi|+1}$ many permissions. Some $e$ must eventually receive enough permissions to satisfy $\tau_\Pi$, otherwise we can approximate $\Delta(X)$ by a $\leq\omega^{|\tau_\Pi|+2}$-fickle function, contradicting the $\geq\omega^\omega$-fickleness of $\Delta(X)$. Even though $\omega^{|\tau_\Pi|+1} \geq\omega$, in reality $e$ only enumerates finitely many elements, except these elements may not be indexed canonically by a natural number. Therefore, the injury by $\tau_\Pi$ is finite.
\end{proof}

The $G$ and $J$ requirements are global and satisfied by construction, like in \citep{downey2020hierarchy}. The verification for $\rho$ also remains unchanged, because the requirement is of finite injury, which allows it to mix with $\tau$ and $\tau_\Pi$, for they inflict finite injury. The verification that takes the most work is for $\eta$:

\begin{Lemma} \label{lemma:eta}
    Let $\lambda=\eta$ and consider the subrequirement of $\eta$ at $m$. Let $s_0$ be the first $\eta$-stage where $\length(\eta)>m$. Let $s_1\geq s_0$ be the first $\eta$-stage where no follower that existed at stage $s_0$ has balls targeted for the computing set of $\eta$ that will later be enumerated. Note that the computing sets of $\eta$ can determine stage $s_1$. From stage $s_1$ onward, we say that $\eta,m$ \emph{believes its computation}.
    \begin{enumerate}
        \item The subrequirement can only be injured by followers that existed at stage $s_0$. Therefore $\theta(m)$ is lifted finitely often, since followers act finitely often.
        \item After stage $s_1$, $\Phi_0(m)$ and $\Phi_1(m)$ cannot be simultaneously injured.
    \end{enumerate}
    If $\eta^\frown\fin \prec g$, then $\eta$ is vacuously satisfied. If $\eta^\frown\infty \prec g$, then $\eta$ will also be satisfied from the above two claims.
\end{Lemma}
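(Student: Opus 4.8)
The plan is to carry out the standard minimal-pair verification for the gate $\eta$, feeding on the finite-injury facts for $\rho$ and $\tau_\Pi$ (Lemmas~\ref{lemma:tau}, \ref{lemma:tau-pi}) and on the inductive hypothesis about the true path $g$. First I would dispose of the $\fin$-outcome: if $\eta^\frown\fin \prec g$ there are only finitely many $\eta$-expansionary stages, so the hypothesis $\Phi_0=\Phi_1=W$ of the gate fails and $\eta$ is vacuously satisfied. So assume $\eta^\frown\infty \prec g$ and fix a subrequirement $\eta,m$. Using the inductive hypothesis, fix a stage $s^\ast$ after which no node to the left of $\eta$ on $g$ is visited and every $\rho$- or $\tau_\Pi$-node of higher priority than $\eta$ along $g$ has stopped acting (there are finitely many such, each finite-injury by the cited lemmas). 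Only finitely many $m$ have $s_0\leq s^\ast$, and these are treated the same way relative to $\max\{s_0,s^\ast\}$, so I may assume $s_0>s^\ast$.

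For Claim~1 the key is that while $\eta$ maintains the subrequirement at $m$ it restrains the two sides $\Phi_0\restriction(m+1)$ and $\Phi_1\restriction(m+1)$ of the believed computation, and every follower of lower priority than $\eta$ respects this restraint; the restraint at $m$ is released only when some follower that existed at $s_0$ injures a side of the current computation while an element $\leq\theta(m)$ simultaneously enters the computing set, which is exactly when $\theta(m)$ is re-declared undefined. Hence a follower created after $s_0$ — whose balls and re-picked uses are all large — can act ``below'' the subrequirement only during these finitely many release windows, harmlessly, since the computation being protected has already been destroyed at that moment. So the only followers that ever force a lift of $\theta(m)$ are the finitely many that existed at $s_0$, and since each of them acts finitely often ($\rho$ and $\tau_\Pi$ being finite-injury), $\theta(m)$ is lifted only finitely often.

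For Claim~2 I would observe that $\Phi_0(m)$ and $\Phi_1(m)$ are injured at the same stage only if a single follower enumerates small elements into both sides of $\eta$ simultaneously, and a follower enumerates several of its elements at once only at its final ``diagonalize'' step, when a partitioned trace (for $\rho$) or a follower together with its $A$-, $B$-, and $C$-uses (for $\tau_\Pi$) is released from the permitting bin. For $\rho$ this is exactly what the partitioning rule of Section~\ref{sec:necessary} forbids: the passing tail of a trace stopped at $\eta$ lies entirely on one side of $\eta$. For $\tau_\Pi$ it is exactly what the gate-passing phase guarantees: before $e$ is released it has passed $\eta$ by lifting, at the next $\eta$-expansionary stage after $\theta(m)$'s last relevant re-pick, whichever of its $B$- or $C$-use lies on one side of $\eta$, while the re-targeting of the $A$-, $B$-, and $C$-extensions under $J_A,J_B,J_C$ has kept the remaining small balls off the other side; subrequirements $\eta,m$ set up after $e$ passes $\eta$ but before $e$ diagonalizes are handled because they are tended to only at $\eta$-expansionary stages, where $e$'s current uses are visible, so $\theta(m)$ is chosen above them. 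By the choice of $s_1$ no follower that existed at $s_0$ still has computing-set balls to be enumerated after $s_1$, and by Claim~1 no later follower forces trouble, so after $s_1$ the computation at $m$ is never injured on both sides at once.

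Finally I conclude: by Claim~1, $\theta(m)$ has a final value $\theta(m)^\ast$, and the last $\Theta$-computation at $m$ declared with that use (reading the computing set) is never destroyed — an element $\leq\theta(m)^\ast$ enters the computing set only as the lifting reaction to a simultaneous injury of $\Phi_0(m)$ and $\Phi_1(m)$, ruled out after $s_1$ by Claim~2, while every other enumeration into the computing set below $\theta(m)^\ast$ comes from a lower-priority requirement respecting the restraint or a higher-priority one that has stopped. Since $\eta^\frown\infty \prec g$, equality at $m$ returns infinitely often, so this value equals $W(m)$; as $m$ was arbitrary, the computing-set functional of $\eta$ is total and computes $W$, so $\eta$ is satisfied, and Claim~1 also yields that $\eta$'s restraints are eventually constant. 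The step I expect to be the real obstacle is Claim~2 in the $\tau_\Pi$ case: checking that the bookkeeping of the gate-passing phase — lifting the $B$- or $C$-use at the right stage, never re-lifting the $A$-use, and the three re-targeting steps that keep the $A$-, $B$-, and $C$-extensions away from the wrong sets under $J_A,J_B,J_C$ — genuinely prevents a follower that has passed $\eta$ from ever injuring both sides of any subrequirement of $\eta$, including subrequirements created arbitrarily late.
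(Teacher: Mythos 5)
Your overall architecture matches the paper's: dispose of the $\fin$ outcome, establish Claim~1 by controlling which followers can have small elements, and establish Claim~2 by analysing who could inflict a double injury, splitting into the $\rho$ and $\tau_\Pi$ cases. But your Claim~2 has a genuine gap. You reduce simultaneous injury to the case of ``a single follower enumerat[ing] small elements into both sides of $\eta$ simultaneously,'' and then check the $\rho$-partitioning rule and the $\tau_\Pi$ gate-passing phase. The paper's proof treats a third, logically prior case that you never address: two \emph{different} followers, where $y_0$ injures one side of $\eta,m$ and, before that side recovers, a lower-priority follower $y_1$ injures the other side. This is a simultaneous injury in the relevant sense (no recovery intervenes, so $\theta(m)$ would have to be lifted), and it is exactly what the permitting bin and the permitting balls exist to prevent: $y_1$ can only be assigned its permitting balls after $y_0$'s enumeration (otherwise $y_0$ would have been cancelled and, by Claim~1, could not injure $\eta,m$), and after that assignment $y_1$ must wait for a stage of its own node $\sigma\succeq\eta^\frown\infty$, which is an $\eta$-expansionary stage, by which time $y_0$'s injury has recovered. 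You mention the permitting bin only descriptively and never invoke it for this purpose, so this case is simply open in your write-up.

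A smaller point on Claim~1: you argue that a follower created after $s_0$ can act ``only during these finitely many release windows, harmlessly, since the computation being protected has already been destroyed at that moment.'' That is not quite the right reason --- the danger is that such a follower's elements could injure the computation \emph{after} it recovers. The paper's argument (a minimal-counterexample argument over the tree) pins down the actual mechanism: the offending element must belong to a node $\sigma\succeq\eta^\frown\infty$, must be picked after the action of the follower $y'$ that caused the lift, and is therefore picked at an $\eta$-expansionary stage, i.e.\ after the injured side has recovered, so it is too large to injure the recovered computation. Your restraint-based phrasing gestures at this but does not establish it; the rest of your argument (the $\fin$ outcome, the $\rho$ and $\tau_\Pi$ subcases of Claim~2, and the concluding totality argument for $\Theta$) is in line with the paper.
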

\begin{proof}
    Claim 1 follows from the tree framework: Let $y$ be the first element whose enumeration contradicts the claim, and let $\sigma$ be node to which $y$ belongs. If $\sigma<_L\eta$ or $\sigma\preceq\eta$, then $y$ will never be enumerated otherwise $\eta$ will be cancelled; if $\sigma>_L\eta^\frown\infty$ then $y$ will be cancelled at expansionary stages, and will be too large to inflict injury at non-expansionary stages. So it must be that $\sigma\succeq \eta^\frown\infty$. Before $y$ acted, the subrequirement's use must have been lifted due to a recent action by some other $y'$. $y'$ must have existed at stage $s_0$ by the choice of $y$ being the first. Now $y$ must be picked after the action by $y'$, otherwise if $y$ is of higher priority than $y'$ then $y'$ would be cancelled before it could act, and if $y$ is of lower priority then $y$ would be cancelled by $y'$. Since $\sigma\succeq \eta^\frown\infty$, $y$ must be picked at an $\eta$-expansionary stage, which is a stage after $\eta,m$ has recovered from the injury inflicted by $y'$. Then $y$ be too large to inflict injury.
    
    Claim 2: We work in stages after $s_1$. By Claim 1, we can ignore all followers that are set up after stage $s_0$ or that were cancelled by stage $s_1$. Assume, for a contradiction, that there is a first stage where $\eta,m$ was injured on both sides simultaneously. There are three cases for how the injury could be inflicted, and we show that none of them are possible.
    
    Case 1 --- The injury was due to different followers: The permitting balls prevent this situation. Assume follower $y_0$ injured one side of $\eta,m$, but before the side recovered, some other $y_1$ belonging to $\sigma\succeq \eta^\frown\infty$ injures the other side. $y_0$ must have lower priority than $y_1$; otherwise, $y_1$ would be cancelled when $y_0$ attacked, and Claim 1 prevents followers that are set up later from inflicting injury. By our choice of $s_1$, $y_1$ did not have a permitting ball at stage $s_1$. $y_1$ can only be assigned permitting balls after $y_0$'s enumeration; otherwise, $y_0$ would be cancelled, and cannot injure $\eta,m$ by Claim 1. But after the assignment, $y_1$ would have waited for a $\sigma$-stage, which is an $\eta$-expansionary stage, before enumerating. The injury from $y_0$ would have recovered before $y_1$ could attack.
    
    Case 2 --- The injury was from a follower of $\rho$: This is not possible for the same reason that $\rho$ requirements work well with $\eta$ requirements in the embedding of $M_3$, which has been discussed in detail in \citep{downey2020hierarchy}.

    Case 3 --- The injury was from a follower $e$ of $\tau_\Pi$: Until we reach the diagonalization phase of the $\tau_\Pi$-strategy, enumerations by $e$ work like in Case 2, which does not give problems. So the injury must be inflicted at the final phase, when $e$'s uses are simultaneous enumerated. Assume wlog that $\tau_\Pi$ works for the non-existence of $A\cap B$. If $\eta\succeq\tau^\frown\infty$, then $\eta,m$ will always know the updated uses of $e$, since $e$ was set up before $\eta,m$ and always picks uses by $\tau$-expansionary stages. Therefore, $\eta,m$ knows $e$ will be enumerated and would not have believed computations until afterwards. So, assume $\eta^\frown\infty \preceq\tau$, and wait for $e$ to pass $\eta$, which would involve lifting the $B$- or $C$-use of $e$. Wlog, assume it was the $B$-use that was lifted, which means that $B$ appeared on exactly one side of $\eta$. If $\eta,m$ was set up before the passing, then after passing, new $B$-elements associated with $e$ will be too large to injure the $B$-side of $\eta,m$, by the $\tau_\Pi$-strategy of not letting $e$'s $A$- or $C$-traces contain small $B$-elements. Therefore the simultaneous injury cannot injure both sides of $\eta,m$. So it must be that $\eta,m$ was set up after the passing. Then $\eta,m$ can keep track of the $A$- and $C$-uses of $e$, because $e$'s $A$-use is never lifted again and its $C$-use, if lifted after passing $\eta$, will always be picked by the next $\eta$-expansionary stage. In particular, $\eta,m$ can keep lifting $\theta(m)$ above the relevant use of $e$, which will help the subrequirement delay believing computations until after $e$ is enumerated.
\end{proof}

\section{Future Work} \label{sec:future}
We can proceed lattice theoretically or degree theoretically. We can consider using lattice theory to find more $>\omega^2$-candidates:
\begin{Question}
    Can we systematically list the non-distributive lattices and apply Corollary~\ref{cor:non-candidate} to get $>\omega^2$-candidates?
\end{Question}

Given a candidate, like the one in Figure~\ref{fig:peter}, we can extend the known degree theoretic techniques to characterize the lattice. The layering and trace-extension methods need to be generalized before they can be applied here.

\begin{Conjecture}
    The lattice in Figure~\ref{fig:peter} characterizes $\geq\omega^\omega$-fickleness.
\end{Conjecture}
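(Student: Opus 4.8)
The plan is to prove both directions separately: that $\geq\omega^\omega$-fickleness is \emph{necessary} for an \re degree to bound the lattice $L$ of Figure~\ref{fig:peter}, and that it is \emph{sufficient}. Note that, by design, $L$ passes the test of Corollary~\ref{cor:non-candidate} --- it contains none of the four $\geq\omega^\omega$-lattices of Figure~\ref{fig:3direct} as a sublattice --- so Theorem~\ref{thm:oo} cannot be applied off the shelf, and both directions genuinely require generalizing the layering and trace-extension machinery of Sections~\ref{sec:necessary}--\ref{sec:usl}.

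For the \textbf{necessity} direction I would re-run the layering argument of Theorem~\ref{thm:oo} with the richer gate structure that $L$ supplies. First, identify inside $L$ a tuple of points $A,B,C,\dots$ together with the join relations ($A\leq BC$, etc.), the meet relations $(\forall W\leq X,Y)[W\leq Z]$, and a diagonalization $A\not\leq C$ that $L$ satisfies; the available gate types determine which alternation of layers we must build. Assume toward a contradiction an \re set $X$ of degree fickleness $\leq\omega^r$, for some $r\in\omega$, bounds $L$ via functionals, and let $\Delta(X)$ be the $\leq\omega^r$-fickle function. Using Lemma~\ref{lemma:enum}, run countably many agents, the $e$-th guessing $\Delta(X)=f^e$ with mind-change function $o^e$. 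When an agent sees $o^e(\text{code})=\omega^{r-1}m_{r-1}+\cdots+m_0$, it erects $2r-1$ nested families of layers alternating among the gate types (the $ABAB$-$AC$-$ABAB$ pattern of Figure~\ref{fig:layers}, suitably extended), the innermost protecting a bottom $A\leq C$-computation. If the opponent peels one layer per attack, the layer depth forces an $A$-change diagonalizing $A$ against $C$; if it peels two layers at once, the corresponding meet $(\exists Q\leq X,Y)[Q\not\leq Z]$ is won, as in Section~\ref{sec:necessary}. The crucial point, exactly as in the discussion around Figure~\ref{fig:layers}, is that a \emph{solo} peel of a middle layer must be recoverable with no net loss --- cancel the associated follower and rebuild the layers above it --- which is what lets us avoid needing extra joins.

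For the \textbf{sufficiency} direction, fix an \re set $X$ whose degree contains a $\geq\omega^\omega$-fickle set $\Delta(X)$, and construct \re sets realizing the labels of $L$ so as to satisfy the global $G$- and $J$-requirements (by construction), the $\eta$-gates (minimal-pair strategy of Section~\ref{sec:sufficient}), the diagonalization $\rho$-requirements, and, if $L$ has non-existing meets, the $\tau$/$\tau_\Pi$ non-meet requirements as in Section~\ref{sec:usl}. The work is to bound the length of the traces a $\rho$ generates. Following the trace-extension, partitioning, and re-targeting rules of Sections~\ref{sec:necessary} and~\ref{sec:usl}, a trace is stopped and partitioned as it descends past the $\leq|\rho|$ many higher-priority gates; each passed gate contributes a factor of at most $\omega$ (or $2$, for a gate whose just-injured side cannot be re-grown), so the total trace length is bounded by $\omega^{|\rho|}<\omega^\omega$. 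Since $\Delta(X)$ is $\geq\omega^\omega$-fickle, some follower of each $\rho$ eventually gets enough permissions --- otherwise we would approximate $\Delta(X)$ by a $<\omega^\omega$-fickle function --- and the verification of $\eta$ proceeds by a Lemma~\ref{lemma:eta}-style case analysis (injury from a different follower is blocked by permitting balls; injury from a $\rho$- or $\tau_\Pi$-follower is handled as in the $M_3$ argument and in Section~\ref{sec:usl}).

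The main obstacle I expect is the necessity direction: verifying that \emph{every} solo change to a middle layer can be absorbed by cancelling a follower and re-erecting the layers above it, and that \emph{no} sequence of single peels lets the opponent reach the bottom $A$-layer within $\leq\omega^r$ fickleness, \emph{for every} $r$. This needs careful inductive bookkeeping of which set-changes are available at each layer, and it is sensitive to precisely which meet relations $L$ satisfies; wherever $L$ lacks a meet that the proof of Theorem~\ref{thm:oo-original} would use, one must check --- as in Section~\ref{sec:necessary}'s treatment of the missing $J_C$ --- that the corresponding double-peel either cannot occur or reduces to a recoverable solo peel. A secondary obstacle, in the sufficiency direction, is ensuring the re-targeting of traces never un-realizes a follower, i.e.\ that a target avoiding the forbidden sets always exists; this is exactly where the detailed join structure of $L$ must be consulted, and, if $L$ has non-meets, where the finitely-injuring $\tau_\Pi$-strategy must be shown compatible with the new gate types.
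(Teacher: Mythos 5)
This statement is an open conjecture in the paper's Future Work section: the paper offers no proof, only the remark that ``the layering and trace-extension methods need to be generalized before they can be applied here.'' So there is no argument of the author's to compare yours against; the only question is whether your plan actually supplies the missing generalization. It does not. The fatal step is in your necessity direction, where you propose to ``identify inside $L$ a tuple of points $A,B,C,\dots$ together with the join relations ($A\leq BC$, etc.), the meet relations $(\forall W\leq X,Y)[W\leq Z]$, and a diagonalization $A\not\leq C$ that $L$ satisfies.'' By your own (correct) observation two sentences earlier, no such triple exists: the lattice of Figure~\ref{fig:peter} is a candidate precisely because no three of its elements satisfy the hypotheses of Theorem~\ref{thm:oo} --- otherwise Corollary~\ref{cor:non-candidate} would already settle the matter and no new argument would be needed. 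Consequently the alternation pattern of layers is not determined by the existing framework, and the entire content of the conjecture is whether some \emph{new} configuration of joins and meets, necessarily spread over more than three incomparable elements (here the two chains through $A,B,C$ and $X,Y,Z$ and the mixed joins $A\cup X$, $B\cup X$, $A\cup Y$), can be assembled into a layering scheme that defeats every $\leq\omega^r$-fickle opponent. It is entirely possible that no such scheme exists and the lattice genuinely characterizes a level below $\omega^\omega$ --- that is exactly why the figure labels it a $>\omega^2$-candidate. Your plan assumes the outcome of this combinatorial analysis rather than performing it.

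The sufficiency direction is more plausible as sketched, since bounding trace lengths by $\omega^{|\rho|}$ is the generic shape of the Downey--Greenberg upper-bound argument; but even there the claim that ``each passed gate contributes a factor of at most $\omega$'' must be re-derived from the specific join structure of this eleven-element lattice. With this many generators the extension rules may leave no legal target for a trace (every choice either un-realizes the follower or re-injures the just-closed side of a gate), and verifying that a legal target always exists --- and that the $\omega$-versus-$2$ dichotomy you invoke actually holds gate by gate --- is the substance of the proof, not bookkeeping. In short: your outline correctly names the two obstacles the paper itself identifies, but resolves neither; as a proof it is circular at the one point where new mathematics is required.
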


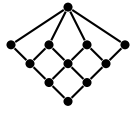
\begin{figure}[tpb]
    \centering
    \begin{tikzpicture}[every node/.style={circle,inner sep=1.2pt}]
        \tikzset{omega2/.pic ={
            \node[circle,fill=black] (0) at (0,-1) {};
            \node[circle,fill=black] (1) at (0,1.5) {};
            \node[circle,fill=black] (A) at (-0.5,-0.5) {};
            \node[circle,fill=black] (X) at (0.5,-0.5) {};
            \node[circle,fill=black] (AX) at (0,0) {};
            \node[circle,fill=black] (B) at (-1,0) {};
            \node[circle,fill=black] (Y) at (1,0) {};
            \node[circle,fill=black] (BX) at (-0.5,0.5) {};
            \node[circle,fill=black] (AY) at (0.5,0.5) {};
            \node[circle,fill=black] (C) at (-1.5,0.5) {};
            \node[circle,fill=black] (Z) at (1.5,0.5) {};
            \draw [-,thick] (0) --(A) --(B) --(C) --(1) --(Z) --(Y) --(X) --(0);
            \draw [-,thick] (1) --(AY) --(AX) --(A);
            \draw [-,thick] (1) --(BX) --(AX) --(X);
            \draw [-,thick] (BX) --(B);
            \draw [-,thick] (AY) --(Y);
        }}
        \path (0,0) pic[scale=0.5]{omega2};
    \end{tikzpicture}
    \caption{A $>\omega^2$-candidate, proposed by Peter Cholak.}
    \label{fig:peter}
\end{figure}



\bibliographystyle{elsarticle-harv} 
\bibliography{main}


%
%
%
\end{document}